\numberwithin{equation}{section}
\theoremstyle{plain}
\newtheorem{thm}{Theorem}[section]
\newtheorem{cor}[thm]{Corollary}
\newtheorem{prop}[thm]{Proposition}
\theoremstyle{definition}
\newtheorem{defn}[thm]{Definition}
\theoremstyle{definition}
\newtheorem{oss}{Remark}[section]
\theoremstyle{remark}
\newcommand{\I}{\mathds{1}}
\renewcommand{\c}{c_{\gamma, \lambda, \theta}}
\newcommand{\ci}{c_{{\gamma_i}, \lambda_i, \theta_i}}
\newcommand{\cipi}{c_{{\gamma_i}, \pi \lambda_i, \theta_i}}
\newcommand{\E}{\mathrm E}
\begin{document}

\title{Tempered positive Linnik processes and their representations}
\author{Lorenzo Torricelli\footnote{University of Parma, Department of Economics and Management. Email: lorenzo.torricelli@unipr.it} \hspace{2cm} Lucio 
Barabesi\footnote{University of Siena, Department of Economics and Statistics.}  \hspace{2cm}   Andrea Cerioli\footnote{University of Parma, Department of Economics and Management.}  }
\date{\today}

\maketitle

\begin{abstract}
We study several classes of processes associated with the tempered positive Linnik (TPL) distribution, in both the purely absolutely-continuous and mixed law regimes. We explore four main ramifications. Firstly, we analyze several subordinated representations of TPL L\'evy  processes; in particular we establish a stochastic self-similarity  property of positive Linnik (PL)  L\'evy processes, connecting TPL processes with  negative binomial subordination. Secondly, in finite activity regimes we show that  the explicit compound Poisson representations gives rise to innovations following novel  Mittag-Leffler type laws. Thirdly, we characterize two inhomogeneous TPL processes, namely the Ornstein-Uhlenbeck  (OU)  L\'evy-driven processes with stationary distribution and the additive process generated by a TPL law. Finally, we propose a multivariate TPL L\'evy process %based on the negative binomial stochastic self--similiarity property of the TPL L\'evy  margins 
based on a negative binomial mixing methodology of independent interest. Some potential applications of the
considered processes are also outlined in the contexts of statistical
anti-fraud and financial modelling.

\end{abstract}

\noindent {\bf{Keywords}}: Tempered positive Linnik processes, subordinated L\'evy processes, stochastic self-similarity, Ornstein-Uhlenbeck processes, additive processes, multivariate L\'evy processes, 
Mittag-Leffler distributions

\noindent {\bf{MSC 2010 classification}}:

\section{Introduction}

In recent years, a large body of literature has been devoted to the tempering of
heavy-tailed laws -- in particular, stable laws -- which prove to be extremely useful in
applications to finance and physics (for a recent introduction to the topic,
see the monograph by \citealt{gra:16}). Indeed, even if heavy-tailed
distributions are well-motivated models in a probabilistic setting,
extremely bold tails are not realistic for most real-world applications. This
drawback has led to the introduction of models which are morphologically
similar to the original distributions even if they display lighter tails. The
initial case for adopting models that are similar to a stable distribution
and with lighter tails is introduced in physics by \cite{man+sta:95} and \cite{kop:95}, and subsequently in economics and finance by the seminal papers of \cite{boy+lev:00} and  \cite{car+al:02}. For recent accounts
on tempered distributions, see e.g. \cite{fal+loe:19} and \cite{gra:19}.

From a static distributional standpoint the  tempered (or ``tilted'') version  $X$ of a random variable (r.v.) $Y$ by a parameter $\theta>0$ is obtained through the Laplace transform %$ %concept  in the fundamental
%case of infinitely-divisible laws, %. Let us consider a positive
%absolutely-continuous random variable (r.v.) $Y$ defined on the probability
%space $(\Omega,\mathcal{F},P)$ with Laplace transform given by
%$L_Y(s)=E[e^{-sY}]$ and probability density function %(p.d.f.)
%$f_Y$. Let us also assume that $Y$ be infinite divisible, in such a way that
%$L_Y(s)=e^{-\phi_Y(s)}$, where $\phi_Y$ represents the characteristic Laplace
%exponent
%\begin{equation*}
%\phi_Y(s)=\int_{(0,\infty)}(1-e^{-st})\nu_Y(dt)\text{ ,}
%\end{equation*}
%while $\nu_Y$ is the corresponding L\a'evy measure. The tempered version of
%consist in  $Y$, say $X$, is defined by assuming that its Laplace transform be
\begin{equation}\label{eq:introLap}
L_X(s)=\frac{L_Y(\theta+s)}{L_Y(\theta)}\text{ .}
\end{equation}
%where $\theta\in(0,\infty)$ represents the tempering parameter. 
%Hence, the
%p.d.f. of $X$ is given by
%\begin{equation*}
%f_X(x)=\frac{e^{-\theta x}f_Y(x)}{L_Y(\theta)}\text{ .}
%\end{equation*}
%It should be remarked that this type of tempering is also referred to as
%exponential tilting, even if we simply adopt the general term ``tempering''
%in this paper, as commonly done in literature. 
From this expression %of $L_X$ and $f_X$
 it is apparent that the original r.v. 
 and its tempered version have distributions which are practically
indistinguishable for real applications when $\theta$ is small, even if their
tail behaviour is radically different in the sense that the former may have
infinite expectation, while the latter has all the moments finite. 

This methodology is particularly well-adapted to the case in which $Y$ follows an infinitely-divisible distribution, since in that case $X$ is also infinitely-divisible and  expression \eqref{eq:introLap} only involves a simple manipulation of characteristic exponents. Furthermore, the L\'evy measure $\nu_X(dt)$ of $X$ is itself a tilted version of that of $Y$,  meaning that  $\nu_X(dt)=e^{-\theta t}\nu_Y(dt)$. 
The described tempering procedure may therefore be easily embedded in the theory of
L\a'evy processes. The relationship on L\'evy measures highlights that a tempered L\'evy process is one whose small jumps occurrence is essentially indistinguishable from  that of the base process, but whose large jumps are much more rare events.  Additionally, % where $\nu_Y(dt)$ is the L\'evy measure $Y$ %Moreover,
%$X$ is in turn infinitely divisible since
%\begin{equation*}
%L_X(s)=e^{\phi_Y(\theta)-\phi_Y(\theta+s)}=e^{-\phi_X(s)}%\text{ ,}
%\end{equation*}
%where
%\begin{equation*}
%\phi_X(s)=\int_{(0,\infty)}(1-e^{-st})e^{-\theta
%}\nu_Y(dt)=\int_{(0,\infty)}(1-e^{-st})\nu_X(dt)\text{ ,}
%\end{equation*}
%while $\nu_X(dt)=e^{-\theta t}\nu_Y(dt)$. Hence, the tempered r.v. $X$ is
%such that its L\a'evy measure is given by the tempering of the original
%L\a'evy measure.
%The described tempering procedure may be promptly embedded in the theory of
%L\a'evy processes. %Indeed, let $(Y_t)_{t\geq 0}$ be a L\a'evy process on
%$(\Omega,\mathcal{F},P)$ adapted to its natural filtration
%$(\mathcal{F}_t)_{t\geq 0}$ such that $Y_1$ has the same law of $Y$ -- and
%hence L\a'evy measure given by $\nu_Y$. Thus, the tempered version of the
%original process, say $(X_t)_{t\geq 0}$, is obtained by considering the
%tempered L\a'evy measure $\nu_X$. 
tempering retains a natural interpretation in terms of equivalent measure changes in probability spaces. Let a
measure $P_\theta$ be defined by means of the following  martingale density
\begin{equation}
dP_\theta=e^{-\theta Y_t+\phi_Y(\theta)t}dP\text{ ,}
\end{equation}
where $\phi_Y$ is the characteristic (Laplace) exponent of $Y$, which goes under the name of Esscher transform. Under $P_\theta$, the L\'evy process associated to the infinitely-divisible r.v. $Y$  coincides with that associated to $X$. % is in turn a L%\a'evy process under 
%L\a'evy measure $\nu_X$ --
This result is of great importance in application fields where the analysis of the process dynamics under equivalent transformation of measures is of relevance, e.g.
option pricing (see \citealt{hub+sga:06}).

In the context of tempering of probability laws, \cite{bar+al:16a} introduce the tempered positive Linnik  (TPL) distribution as a
tilted version of the positive Linnik (PL) distribution considered in \cite{pak:98} and inspired by
the classic paper of \cite{lin:63}. The PL law has received 
increasing interest since it constitutes a generalization of the gamma law
and recovers the positive stable (PS) law as a limiting case (for details,
see \citealt{chr+sch:01}). Hence, its tempered version is suitable
for modelling real data. In addition, the tempering substantially extends the
parameter range of the PL law and accordingly gives rise to two distinct
regimes embedding positive absolutely-continuous distributions, as well as
mixtures of positive absolutely-continuous distributions and the Dirac mass
at zero. %Actually, this 
The latter regime may be useful for modelling
zero-inflated data. Finally, the tempered positive stable (TPS) law (or ``Tweedie distribution''), which %-- also
%named as Tweedie distribution in some statistical literature,% see e.g.
%Barabesi et al. (2016a) for more details -- 
%is a limiting case of the TPLl%aw. 
is central in many recent statistical applications, see e.g. \cite{bar+al:16b}, 
\citealt{fon+al:19}, \citealt{kha+pos:06}, \citealt{ma+has:18},  can also be recovered as a limiting case of the TPL law. A
discrete version of the TPL law is suggested in \cite{bar+al:18}, while
computational issues dealing with the TPL and TPS laws are discussed in \cite{bar:20} and \cite{bar+pra:14,bar+pra:15,bar+pra:19}.

Regarding the theoretical findings on the TPL law, \cite{bar+al:16a}
obtain closed formulas of the probability density function  and the conditional probability density function (under the
two regimes, respectively) of the TPL random variable in terms of the Mittag-Leffler
function and outline the infinite-divisible and self-decomposable character
of the corresponding L\a'evy measures -- as well as their representation as a
mixture of TPS laws with a gamma mixing density.  \cite{kum+al:19a} study
the gamma subordinated representation of the tempered Mittag-Leffler subclass
of TPL L\a'evy process, its moment and covariance properties and provide
alternative derivations of the associated L\a'evy densities and supporting
equations for the probability density function. \cite{leo+al:21} explore in detail
the large deviation theory for TPL processes. \cite{kum+al:19b} instead
analyze Linnik processes -- not necessarily increasing -- and their generalizations.

In this paper, we focus on a number of stochastic processes naturally arising from
the TPL distribution and illustrate their representations and properties. First of all we provide a detailed account of the infinite divisibility property  and unify and clarify the L\'evy-Khinctine structure of a TPL law. We also prove additional properties of TPL laws, such as geometric infinite divisibility. We then study the  subordinated structure of the TPL L\'evy process, showing that besides the defining characterization as a gamma-subordinated law, such laws enjoy numerous representations in terms of a negative binomial subordination. This is in turn connected to the stochastic self-similarity property -- as introduced by \cite{koz+al:06} -- of the PL subordinator with respect to the negative binomial subordinator.  We further make clear the role of the tail parameter $\gamma$ in determining two distinct regimes for the processes associated to the TPL distribution. 
%\marginpar{\textbf{check frase dopo ,}}
Whenever $\gamma \in (0,1]$, the TPL law is absolutely continuous and %to which corresponds an infinite activity process. 
an infinite activity process occurs. In contrast, when $\gamma<0$,  the TPL has a mixed absolutely-continuous and point mass expression. We then find that the corresponding L\'evy process is a compound Poisson process which we show to feature increments of ``logarithmic'' Mittag-Leffler type. %, which are in some sense akin to the logarithmic discrete law.
 The absolutely-continuous case instead corresponds to a self-decomposable family distributions to which using classic theory (\citealt{bn:97}, \citealt{sat:91}) we are able to associate an Ornstein-Uhlenbeck (OU) L\'evy-driven process with TPL stationary distribution and an additive TPL process. We characterize such processes. In  particular, the L\'evy driving noise of the OU process with TPL stationary distribution is a compound Poisson process with tempered Mittag-Leffler distribution, a probability law which as far as the authors are aware has not been considered before. Finally, we concentrate on the multivariate TPL L\'evy process constructed from a TPL L\'evy process with independent marginals subordinated to a negative binomial subordinator. Such a construction  makes again critical use of the stochastic self-similarity property, and can be easily generalized to different subordinand processes. Furthermore, it encompasses some well-known multivariate distributions of common use in
the statistical environment.

The paper is organized as follows. In Section 2 -- after reviewing some basic known properties -- we discuss the
 infinite divisibility and the L\a'evy-Khintchine representation, as
well as  the self-decomposability and geometric infinite divisibility, of the TPL law.
Section 3 is devoted to TPL L\a'evy subordinators and their various
representations. In Section 4, we consider the OU
L\a'evy-driven processes with stationary TPL distribution and the additive
process generated by a TPL law. In Section 5, we propose the natural
multivariate version of the TPL law, and the connected L\a'evy process. Finally, in Section 6,  applications for statistical anti-fraud are considered.

%Recent years tempered version of 

%\cite{bar+al:16a} introduce the tempered positive Linnik (TPL) distribution as a tilted version of the Linnik distribution. The authors derive probability density functions, determine the infinite divisibile and self-decomposable character the expressions of the corresponding L\'evy measures, as well as their representation as a mixture of tempered stable laws with a gamma mixing density.  \cite{kum+al:19a} study the gamma-subordinated representation of the tempered Mittag-Leffler subclass of TPL L\'evy process, its moment and covariance properties, provide alternative derivations of the associated L\'evy densities and supporting equations for the p.d.f. \cite{leo+al:21}  the large deviation theory of TPL. \cite{kum+al:19b} instead analyze Linnik processes not necessarily increasing and their generalisations.  

 %inspired by the potential for international trade data modelling of the Tweedie explored in Barabesi et al. 2016b. 

%In this paper we study some stochastic processes naturally arising from the TPL distributions. In particular, with in mind application to joint data modelling, we focus on a natural multivariate TPL process.

\section{The tempered positive Linnik distribution}\label{sec:dist}

If $X$ represents a positive random variable (r.v.) on a probability space 
$(\Omega,\mathcal{F}, P)$, we denote its Laplace transform $L_X(s)=\E[e^{-sX}]$, for all values of $s \in \mathbb C$ for which such expectation exists. 

The PL family of laws PL$(\gamma, \lambda, \delta)$ was introduced by \cite{pak:98}, on the basis of the original suggestion of 
\cite{lin:63}. A  PL r.v. $V$ is characterized by the Laplace transform

\begin{equation}\label{eq:PLdef}
L_{V}(s)=\left(\frac{1}{1+ \lambda s ^ \gamma}\right)^{\delta}, \quad
 \text{Re}(s)>0 ,
\end{equation}
$( \gamma,\lambda, \theta) \in \times (0,1]\times \mathbb{R}_+ \times \mathbb{R}_+ $.
For details on Linnik-type laws see e.g. \cite{chr+sch:01}, or more recently \cite{kor+al:20} and references therein.

\cite{bar+al:16a} propose a new family of distributions which is a tempered version of the PL family. By slightly modifying the parametrization thereby proposed,
the   TPL r.v. $X$ is defined as a member of the four-parameter family  TPL$(\gamma, \lambda, \delta, \theta) $ with Laplace transform
given by

\begin{equation}\label{eq:TPLdef}
L_{X}(s)=\left(\frac{1}{1+ \text{sgn}(\gamma)\lambda((\theta+s)^\gamma
-\theta^\gamma)}\right)^{\delta}, \quad
 \text{Re}(s)>0 ,
\end{equation}
with  parameter space for $(\gamma,\lambda,\delta,\theta)$ given by
\begin{equation}
 S=\{ (-\infty,1]\setminus \{ 0\} \times\mathbb{R}_{+}\times%
\mathbb{R}_{+}\times\mathbb{R}_{+}\}\cup\{(0,1]\times\mathbb{R}_{+}\times%
\mathbb{R}_{+}\times\{0\}\}.
\end{equation}

The terminology is motivated by the fact that the genesis of this distribution  for $\gamma \in (0,1)$ is that of tempering PL random variables
in a way analogous to the classic tempering of stable laws. If $V$ is PL$(\gamma, \lambda, \delta)$ and $\theta >0$ then
\begin{equation}\label{eq:tilt}
L_{X}(s)= \frac{L_{V}(\theta+s)}{L_{V}(\theta)}=\left(\frac{1}{1+\lambda'((\theta+s)^\gamma-\theta^\gamma)}\right)^{\delta}
\end{equation}
with $\lambda'= \lambda/(1+\lambda \theta^\gamma)$. See Subsection \ref{sec:sdgid} further on  for more on this analogy.

\medskip

 The TPL family encompasses the PL
law  for $\theta=0$, the  Mittag-Leffler law proposed by
\cite{pil:90} for $\delta=1$ and $\theta=0$,
 and the gamma law for $\gamma=1$, or -- alternatively -- for $\gamma\in (0,1]$ and
$\lambda\theta^\gamma=1$. Furthermore, a TPS$(\gamma,\lambda,\theta)$ can be obtained as a limit in distribution of a TPL$(\gamma, \delta \lambda, \delta, \theta)$ as $\delta \rightarrow \infty$.
In addition from  \eqref{eq:TPLdef} we see that the TPL family is closed under convolution. More precisely  let  $(X_k)_{k \in \mathbb N}$   be a sequence of independent r.v.s with \text{TPL}$(\gamma, \lambda, \delta_k, \theta)$ distribution; then % \begin{equation}
$\sum_{k=1}^n X_k$ has  \text{TPL}$\left(\gamma, \lambda, \sum_{k=1}^n \delta_k, \theta\right)$ distribution.
%\end{equation}

\medskip
The fact that in \eqref{eq:TPLdef} the parameter $\gamma$ is allowed to be negative has many implications and is one of the central aspects of this paper. To begin with, the two parameters subsets  $(-\infty, 0)$ and $(0,1]$ for $\gamma$ determine distinct regimes in the Lebesgue decomposition of the law of a TPL variable. 

Denote with $E_{a, b}^c(z)$, $z  \in \mathbb C$, the \cite{pra:71} three-parameter Mittag-Leffler function  \begin{equation}\label{eq:convolution}
E_{a, b}^c(z)=\sum_{k=0}^\infty \frac{(c)_k z^k}{k!\Gamma(ak+b)}, \qquad \text{Re}(a) >0, \quad \text{Re}(b)>0, \quad c \in \mathbb C,
\end{equation}
where $(c)_k=c (c+1) \ldots (c+k-1)$ is the Pochhammer symbol. The classic one and two-parameter Mittag-Leffler functions $E_a$ and $E_{a, b}$ coincide with $E_{a,1}^1$ and $E_{a,b}^1$ respectively. In \cite{bar+al:16a} it is shown that for $\gamma \in (0,1)$ a TPL random variable $X$ has probability density function (p.d.f.) $f_X$ given by

\begin{equation}\label{eq:pdf+}
f_X(x; \gamma, \lambda, \delta, \theta)=\frac{ e^{-\theta x} x^{\gamma \delta-1} }{\lambda ^\delta} E^\delta_{\gamma, \gamma \delta}\left(\frac{\lambda \theta^\gamma-1}{\lambda} x^\gamma \right)\I_{\{x >0 \}}.
\end{equation}
 %which can be shown by exponentially tempering the Linnik law with the parameter $\theta$ and using the expression for the as shown in 
In the case $\theta=0$, forcing $\gamma \in (0,1]$, this collapses to the p.d.f. of a PL law, known since  \cite{lin:63}. However, the authors observe that when instead $\gamma \in (-\infty, 0)$ the distribution is not absolutely continuous. Nevertheless, the conditional p.d.f. % $f^-(x)$
 on the event $\{X >0\}$ is available.
For more details  on PL and TPL 
families of laws % and the properties of the related p.d.f. 
see \cite{bar+al:16a} and \cite{bar+al:16b}.% and Barabesi (2020).
%while for the discrete version of the TPS law see Barabesi et al. (2018).

%Obvious remark, stability under convolution (delta parameter adds).

%We should add an exponential location parameter.

\subsection{Infinite divisibility and L\'evy-Khintchine representation}

A key property  of the TPL distribution is its infinite divisibility. We recall that a positive random variable $X$ is said to be infinitely-divisible if for all $n=1,2 \ldots$, there exist $n$ i.i.d. random variables $X_{k,n}$, $k=1,\ldots, n,$ such that  $X=^d X_{1,n}+ \ldots + X_{n,n}$. Infinite divisibility of a positive r.v. $X$ is equivalent to require that the logarithm of $L_X$ is a Bernstein function (\citealt{sch+von:12}, Lemma 5.8). This means that there exists a positive measure $\nu$ supported on $\mathbb R_+$ such that $\int_0^\infty( 1 \wedge x) \nu(dx) <\infty$  and constants $a, b>0$ such that
\begin{equation}\label{eq:LK}
\phi_X(s):=-\log(L_X(s))=a + b s + \int_{(0, \infty)} (1-e^{-s t}) \nu(dt). 
\end{equation}
%Also recall that if $f$ and $g$ are Bernstein functions, so is $f \circ g$ is and. 
 
Equation \eqref{eq:LK} above is called the L\'evy-Khintchine decomposition of $X$ 
and $(a, b, \nu)$ is referred to as the triplet of L\'evy characteristics with L\'evy measure $\nu$,  and the Bernstein function $\phi_X$ as the characteristic (Laplace) exponent (see e.g. \citealt{sat:99}).

\smallskip

If $f$ and $g$ are Bernstein functions  then so is $f \circ g$ (\citealt{sch+von:12}, Corollary 3.8, $iii$). Therefore, if $Y$ and $Z$ are independent positive r.v.s then $\phi_Z \circ \phi_Y$ is the characteristic exponent of some positive infinitely divisible random variable $X$. Moreover, if  $(a, b,\mu)$ and $(\alpha, \beta, \rho)$ are the L\'evy characteristics triplets respectively of $Y$ and $Z$ the L\'evy triplet of $X$ is given by $(\phi_Z(a), b  \beta, \eta)$, where for all Borel sets $B$
\begin{equation}\label{eq:Bochner}
\eta(B)=\int_{(0,\infty)} \mu^Y_t(B) \rho(dt) + b \mu(B),
\end{equation} 
%where $\rho(dt)$ is the \emph{L\'evy measure} of $Z$ and $
where $(\mu^Y_t)_{t \geq 0}$ is the convolution semigroup of probability measures associated to $\phi_Y$ (\citealt{sch+von:12}, Theorem 5.27). Equation \eqref{eq:Bochner} has the statistical interpretation of $X$ being a mixture of $Y$ over the mixing density $Z$, and the dynamic interpretation of a subordination of increasing L\'evy processes  (\citealt{sat:99}, Chapter 30).

\smallskip

 We recall that for $(\lambda, \delta) \in \mathbb R^2_+$ the characteristic exponent $\phi_Z$ and L\'evy density $u_Z$ of a gamma  $G(\lambda, \delta)$ r.v. $Z$, whose p.d.f. is given by 
\begin{equation}\label{eq:gammapdf}
f_Z(x; \lambda, \delta)= \frac{x^{\lambda-1}}{\Gamma(\lambda)\delta^\lambda} e^{-x/\delta} \I_{\{x > 0\}}, \end{equation}
   are respectively
\begin{align}\label{eq:gammaCE}
\phi_Z(s)&=\delta \log(1+ \lambda s ), \qquad  \text{Re}(s)>-1/\lambda, \\ u_Z(x)&=\delta \frac{e^{-x/\lambda}}{x}\I_{\{x>0 \}} .
\end{align}

The characteristic exponent and L\'evy density of a $\text{TPS}(\gamma, \lambda,\theta)$ r.v. $Y$ for $(\gamma, \lambda, \theta) \in \pi^{\delta}(S)$, where $\pi^{\delta}$ is the projection on the $\delta=0$ subspace of $\mathbb R^4$, is: %   (-\infty,1]\setminus \{ 0\} \times \mathbb R_+  \times \mathbb R_+ \cup \{ 0 \}$ are instead
\begin{align}\label{eq:TPSce}
\phi_Y(s)&=\text{sgn}(\gamma) \lambda( (\theta+s)^\gamma -\theta^\gamma), \qquad \text{Re}(s)>0, \\ u_Y(x)&=\frac{|\gamma|\lambda}{\Gamma(1-\gamma)}\frac{e^{-\theta x}}{x^{\gamma+1}} \I_{\{x>0\}}
\end{align}
and for $\gamma \in (0,1)$, the r.v. $Y$ admits the series representation 
\begin{equation}\label{eq:TPSpdf}
f_{Y}(x; \gamma, \lambda, \theta)  =\frac{e^{-\theta x +  \lambda  \theta^\gamma}}{x}
\, \sum_{k=1}^\infty\frac{1}{k!\Gamma(-k\gamma)}\,
\left(-\frac{x^\gamma}{ \lambda }\right)^{-k}\I_{
\{x>0\}},
\end{equation}
which can be obtained by exponentially tilting with parameter $\theta>0$ the series representation  given in \citet[p.88]{sat:99} of the PS $(\gamma,\lambda)$ law.
%as in \cite{bar+al:16a}.

%Although infinite  divisibility of an $X_{TPL}$ variable can be proved by elementary methods, we provide below its explicit L\'evy Khinctine decomposition which will be useful in the sequel.

\medskip

The following result has been established in \cite{bar+al:16a} by considering limits of the probability measures as $t$ tends to zero. For $\gamma \in (0,1)$ a proof using \eqref{eq:Bochner} is offered in \cite{kum+al:19a}.  
 In the Proposition below, we summarize these results and extend  them   to the case $\gamma <0$. %We prove the case $\gamma <0$ using \eqref{eq:Bochner}.

\begin{prop}\label{LevyTPL}
Let   $Y$ and $Z$ be independent r.v.s distributed respectively according to a $\text{\upshape{ TPS}}(\gamma, \lambda, \theta)$  and a $G(1, \delta)$ law. Then the r.v. $X$ whose characteristic exponent $\phi_X$ is given by  
\begin{equation}\label{eq:comp}
\phi_X(s)=\phi_Z(\phi_Y(s))  
\end{equation}
has $\text{\upshape{TPL}}(\gamma, \lambda, \delta, \theta)$ distribution. As a consequence any {\upshape TPL} r.v. $X$ is infinitely-divisible with triplet $(0,0, \nu)$, where $\nu$ is an absolutely-continuous measure. Furthermore if  \begin{equation} 
(\gamma, \lambda,\delta , \theta)\in \{ \gamma <0  \} \cup \{0<\gamma<1 ,  \lambda \theta^\gamma<1  \} \subset  S
 \end{equation}
 then $\nu$ has density $u_X$ given by

\begin{equation}\label{eq:LM}
u_X(x)=|\gamma|\delta \frac{e^{-\theta x}}{x}\left(E_{|\gamma|} \left( \c \, x^{|\gamma|} \right) - \I_{\{\text{\upshape{sgn}}(\gamma)= -1\}} \right)\I_{\{x>0\}} 
\end{equation}
with% $\delta_{i,j}$, $i,j \in \mathbb Z$ the Kronecker delta.

\begin{equation}\label{c}
 \c:= \left(\frac{\lambda \theta^\gamma-\text{\upshape{sgn}}(\gamma)}{\lambda}\right)^{\text{\upshape{sgn}}(\gamma)} .
\end{equation}

\end{prop}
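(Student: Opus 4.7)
The plan is to verify the three assertions in the natural order: first the composition identity, then infinite divisibility with an absolutely continuous L\'evy measure, and finally the explicit form \eqref{eq:LM}.

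First, substituting the TPS exponent \eqref{eq:TPSce} into the $G(1,\delta)$ exponent \eqref{eq:gammaCE} (with rate parameter $1$) gives $\phi_Z(\phi_Y(s)) = \delta\log\bigl(1+\text{sgn}(\gamma)\lambda((\theta+s)^\gamma-\theta^\gamma)\bigr)$, whose negative exponential is exactly \eqref{eq:TPLdef}, so $X\sim\text{TPL}(\gamma,\lambda,\delta,\theta)$. Since $\phi_Y$ and $\phi_Z$ are Bernstein, so is $\phi_X=\phi_Z\circ\phi_Y$ by Corollary 3.8(iii) of Schilling--Vondra\v{c}ek, hence $X$ is infinitely divisible. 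Applying \eqref{eq:Bochner} with the triplets $(0,0,u_Y\,dx)$ and $(0,0,u_Z\,dx)$ of $Y$ and $Z$ (both drift- and killing-free) yields the triplet $(\phi_Z(0), 0, \nu)=(0,0,\nu)$ with $\nu(B)=\int_0^\infty \mu_t^Y(B)\,u_Z(t)\,dt$. For $\gamma\in(0,1)$ every $\mu_t^Y$ has density \eqref{eq:TPSpdf}; for $\gamma<0$ the TPS process is compound Poisson, so $\mu_t^Y=e^{-t\lambda\theta^\gamma}\delta_0+(1-e^{-t\lambda\theta^\gamma})F_t$ with $F_t$ absolutely continuous. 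In either case $\mu_t^Y$ is absolutely continuous on $(0,\infty)$, and hence so is $\nu$.

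To derive \eqref{eq:LM}, observe that since there is no drift, differentiating the L\'evy--Khintchine formula yields
\[
\phi_X'(s)=\int_0^\infty x\,e^{-sx}u_X(x)\,dx=\frac{\delta|\gamma|\lambda(\theta+s)^{\gamma-1}}{1+\text{sgn}(\gamma)\lambda((\theta+s)^\gamma-\theta^\gamma)},
\]
so by uniqueness of the Laplace transform it suffices to match this with the Laplace transform of $x\,u_X(x)\,dx$ with $u_X$ as in \eqref{eq:LM}. Using the classical identity $\int_0^\infty e^{-sx}E_{|\gamma|}(\omega x^{|\gamma|})\,dx=s^{|\gamma|-1}/(s^{|\gamma|}-\omega)$ (valid for $s^{|\gamma|}>\omega$) applied at $s\mapsto\theta+s$, the two cases $\gamma>0$ and $\gamma<0$ are handled separately. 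For $\gamma>0$, plugging $\c=\theta^\gamma-1/\lambda$ and rewriting $(\theta+s)^\gamma-\c=\lambda^{-1}(1+\lambda((\theta+s)^\gamma-\theta^\gamma))$ gives the match directly. For $\gamma<0$, the $-\I$ term in \eqref{eq:LM} precisely removes the $k=0$ summand of the Mittag--Leffler series, leaving a numerator $\c$; elementary cross-multiplication then reduces the required identity to $\c(1+\lambda\theta^\gamma)=\lambda$, i.e.\ \eqref{c}.

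The main technical point is the $\gamma<0$ case: because $E_{|\gamma|}(\c x^{|\gamma|})$ grows like $|\gamma|^{-1}\exp(\c^{1/|\gamma|}x)$ at infinity, one must verify both that the subtraction of $1$ is indispensable for integrability against $(1\wedge x)dx$ and that $\c<\theta^{|\gamma|}$ (equivalent to $\lambda<\theta^{-\gamma}+\lambda$, trivially true) so that the exponential tempering $e^{-\theta x}$ dominates. The parameter restriction $\lambda\theta^\gamma<1$ for $\gamma\in(0,1)$ corresponds to $\c<0$, in which case $E_\gamma(\c x^\gamma)$ is completely monotone and non-negative, so $u_X$ is a legitimate density; outside the stated region the closed-form identity still holds formally, but positivity and convergence of the Mittag--Leffler series must be argued more carefully, which is why the statement is confined to this range.
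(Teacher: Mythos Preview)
Your argument is correct, and it takes a genuinely different route from the paper. The paper proves \eqref{eq:LM} \emph{constructively} via the Bochner integral \eqref{eq:Bochner}: for $\gamma\in(0,1)$ it inserts the TPS series density \eqref{eq:TPSpdf} and the gamma L\'evy density, integrates term by term in $t$ (which is where the restriction $\lambda\theta^\gamma<1$ enters, to make $\int_0^\infty t^{k-1}e^{(\lambda\theta^\gamma-1)t}\,dt$ converge), and then recognizes the resulting series as $E_\gamma$; for $\gamma<0$ it first identifies $Y$ as a compound Poisson process with gamma jumps, writes out the Lebesgue decomposition of $\mu_t^Y$, and again integrates term by term. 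Your approach instead \emph{verifies} the formula: having established absolute continuity of $\nu$ via Bochner, you compute $\phi_X'(s)$ directly from the TPL exponent and match it against the Laplace transform of $x\,u_X(x)$ using the standard identity $\int_0^\infty e^{-sx}E_\alpha(\omega x^\alpha)\,dx=s^{\alpha-1}/(s^\alpha-\omega)$.

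The trade-off is the usual one between derivation and verification. The paper's calculation explains where the Mittag--Leffler function comes from and would let one discover \eqref{eq:LM} without knowing it in advance, at the cost of justifying interchange of sum and integral in each regime. Your argument is shorter, uses only a single classical transform identity, and makes transparent that the restriction $\lambda\theta^\gamma<1$ for $\gamma\in(0,1)$ is an artifact of the series method rather than intrinsic: your Laplace identity holds for $(\theta+s)^\gamma>\c$, i.e.\ for all $s\ge 0$ regardless of the sign of $\c$, so the formula \eqref{eq:LM} is in fact valid on the full range (positivity of $u_X$ being trivial when $\c>0$ since all series terms are positive, and following from Pollard's complete-monotonicity theorem when $\c<0$). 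One small inaccuracy in your final paragraph: it is precisely the case $\c<0$ where positivity requires the complete-monotonicity argument, not the complementary region.
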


\begin{proof}

%and therefore \eqref{eq:comp} is immediate from the definition \eqref{eq:TPLdef}. %To prove \eqref{eq:LM},
% Now observe that the convolution semigroup of sub-probabiliy measures associated respectively with $\phi_Y$ the laws $(\mu^Y_t)_{t\geq 0}$ and $(\mu^Z_t)_{t\geq 0}$ of the distributions  $\Gamma(\lambda, t \delta)$ and $TS(t,\gamma,\theta)$. 
 
Equation \eqref{eq:comp} is straightforward from \eqref{eq:TPLdef}, \eqref{eq:gammaCE} and \eqref{eq:TPSce},   and  $X$ is infinitely-divisible because both $Z$ and $Y$ are. %$\phi_X$ is a composition of characteristic exponents of infinitely. % and since $Z$ and $Y$ are infinitely-divisible  then $\phi_Z$ and  $\phi_Y$ are characteristic exponents and thus % (Schilling and Vondracek Thm. 5.27)
% $\phi_Y$ is also the characteristic exponent of some infinitely divisible law. 

%Furthermore the  L\'evy measure $u$ of $X$ is given by the vague integral

%where  $\mu^Y_t(dx)$ is  the convolution semigroup of sub-probability measures associated with $Y$, that is a, family of laws $TS_\gamma(t \lambda , \theta)$, and $\delta \frac{e^{- t}}{t}I_{t>0}$ is well-known to be the L\'evy measure of $Z$.

\noindent Let us now assume $\gamma \in (0,1)$. 
%and consider a positive $\gamma$-stable variable $X_{PS}$ in $PS_\gamma(\lambda,t)$. By  \citet[p.88]{sat:99} we have that its p.d.f is
%\begin{equation}\label{eq:pdfXTPS}
%f_{X_{PS}}(x)=\frac{1}{x}\,\sum_{k=1}^\infty\frac{1}{k!%\Gamma(-k\gamma)}\,
%\left(-\frac{x^\gamma}{ \lambda t}\right)^{-k}, \qquad
%x\in\mathbb{R}^{+} ,
%\end{equation}
%where in the series we recognize the $_0\Psi_1[0, -\gamma; -\lambda t x^{-\gamma}]$ Fox-Wright function. Tempering the p.d.f. above using the parameter $\theta$ via
%\begin{equation}\label{pdfXTPS}
%f_{Y}(x)=\frac{\exp(-\theta x)f_{X_{PS}}(x)}{L_{X_{PS}}(\theta)}.
%\end{equation}
%and substituting \eqref{eq:pdfXTPS} in the above yields
%\begin{equation}
%f_{Y}(x)  =\frac{\exp(-\theta x +  \lambda t \theta^\gamma)}{x}
%\, \sum_{k=1}^\infty\frac{1}{k!\Gamma(-k\gamma)}\,
%\left(-\frac{x^\gamma}{ \lambda t}\right)^{-k}, \qquad
%x\in\mathbb{R}^{+} .
%\end{equation}
We can  apply \eqref{eq:Bochner} with $\mu_t^Y$ being the absolutely-continuous measures given by the densities $f_Y(x; \gamma, t  \lambda, \theta)$ from \eqref{eq:TPSpdf} and  the L\'evy triplet $(0,0, \delta \frac{e^{-t}}{t} \I_{\{t>0\}}dt)$ characterizing $Z$. Whenever $\lambda\theta^\gamma <1$ this gives for $x>0$ a uniformly-integrable series, % Replacing in  \eqref{eq:Bochner} and
 which we can integrate term by term to get the following L\'evy density for $X$

\begin{align}\label{eq:gamma+series}
u_X(x)&=\delta \frac{e^{-\theta x}}{x} \sum_{k=1}^\infty\frac{1}{k!\Gamma(-k\gamma)}\, \left(-\frac{x^\gamma}{\lambda}\right)^{- k}
 \int_0^\infty t^{k-1} e^{\left( \lambda\theta^\gamma-1 \right)t}  dt \nonumber \\&=\delta  \frac{e^{-\theta x}}{x} \sum_{k=1}^\infty\frac{1}{k \Gamma(-k\gamma)}\, \left( \frac{\lambda \theta^\gamma-1 }{\lambda}x^{\gamma}\right)^{-k}  \nonumber  \\&=-\gamma \delta  \frac{e^{-\theta x}}{x} \sum_{k=1}^\infty\frac{1}{\Gamma(1-k\gamma)}\, \left( \c \, x^{\gamma}\right)^{-k} \nonumber 
 \\&=\gamma \delta  \frac{e^{-\theta x}}{x} E_{\gamma}\left( \c \, x^\gamma \right)
\end{align}
after having applied \citet{hau+al:11}, Equation (9.2), in the second to last line.

\smallskip

\noindent If instead $\gamma <0$ we observe that we can rewrite

\begin{equation}\label{eq:CPPY}
\phi_Y(s)=\lambda \theta^\gamma \left(1-\left(\frac{1}{1+ s/\theta } \right)^{-\gamma}    \right)=\lambda \theta^\gamma (1-e^{-\phi_{Z}(s) }),
\end{equation}
where $Z$ has law $G(-\gamma, 1/\theta)$. Therefore $Y$ is in distribution a compound Poisson process  with L\'evy density  $\lambda \theta^\gamma f_{Z}$ where $f_{Z}$ is the p.d.f. of $Z$, and the measures $\mu^Y_t$ are the  laws of $Y$ with intensity $\lambda \theta^\gamma$ and i.i.d. excursions $Z$. It is well-known (e.g. \citealt{sat:99}) that $\mu^Y_t$ have the Lebesgue decomposition, for any Borel set $B$: 

\begin{align}\label{eq:CPP}
\mu^Y_t(B)&=e^{-  \lambda   \theta^\gamma t}\delta_0(B)+\sum_{k=1}^\infty  \frac{e^{- \lambda \theta^\gamma t}}{k!} (  \lambda \theta^\gamma t)^k \int_B f^{*k}_{{Z}}(x)dx \nonumber \\  &= e^{-   \lambda \theta^\gamma t}\delta_0(B)+\sum_{k=1}^\infty  \frac{e^{-  \lambda \theta^\gamma t }}{k!}  (t \lambda) ^k \int_B   \frac{x^{-\gamma k-1 } } {\Gamma(-\gamma k)}e^{-   \theta x }  dx
\end{align}
with the usual convolution notation and where $\delta_0$ is the Dirac distribution concentrated in 0. According to \eqref{eq:Bochner} we have the uniformly integrable series:

\begin{align}\label{eq:gamma-series}
u_X(x)&=\delta \frac{e^{-  \theta x }}{x} \sum_{k=1}^\infty \frac{ x^{-\gamma k } }{k! \Gamma(-\gamma k)} \lambda ^k   \int_0^\infty t^{k-1} e^{-   \left( \lambda \theta^\gamma +1   \right) t}  dt \nonumber  \\ &= \delta \frac{e^{-\theta x  }}{x} \sum_{k=1}^\infty \frac{1}{k \Gamma(-\gamma k)} \left( \frac{\lambda}{\lambda \theta^\gamma+1} x^ {-\gamma} \right)^k    \nonumber   \\ &= -\gamma \delta \frac{e^{-\theta x  }}{x} \left( E_{-\gamma}\left( \c \,  x^ {-\gamma} \right)    -1 \right).
\end{align}
Combining \eqref{eq:gamma+series} and \eqref{eq:gamma-series} yields \eqref{eq:LM}. 
\end{proof}

%\begin{oss} A closed form  for the L\'evy density  in the case $\gamma \in (0,1)$, $\lambda \theta^\gamma >1$ does not seem to be attainable on account of loss of uniform convergence of the series representation for the tempered-stable p.d.f. However the existence of an absolutely continuous measure is still granted on the ground of e.g. Bertoin 1996 Proposition ...
%\end{oss}
The case $\theta=0$ recovers the L\'evy measure of the PL distribution as given in e.g. \cite{bn:00}. %(Barndorff Nielsen 2000).

Equation \eqref{eq:LM} serves as a starting point for the analysis of the processes based on a TPL a law. Furthermore,  it provides the structure of the  cumulants of a TPL distribution. The following result is new.

\begin{prop}\label{cumulantsTPL}
Under the parameters restrictions and in the notation of Proposition \ref{LevyTPL} and assuming additionally $\theta \neq 0$, for $n \in \mathbb N$ %indicate with
let $\kappa^+_n$ and $\kappa^-_n$ be the cumulants  of the {\upshape TPL} distribution respectively in the regimes $\gamma \in (0,1)$ and $\gamma \in (-\infty, 0)$. We have

\begin{equation}\label{eq:equation}
\kappa^\pm_n=\frac{|\gamma|\delta}{\theta^n} g_{n-1}^\pm\left( \frac{ \c }{ \theta^{|\gamma|} } \right)
% \left( %\text{\upshape{Li}}_{1-n}\left(  \; \left(\frac{\lambda \theta^\gamma - \text{\upshape{sgn}}(\gamma) }{\lambda \theta^\gamma}\right)^{\text{\upshape{sgn}}(\gamma)} \right) %+  \delta_{\text{\upshape{sgn}}(\gamma), 1}(n-1)! \right).
\end{equation}
%where {\upshape Li}$_s(z)$ is the polylogarithm function.
where $g^\pm_n(x)$ satisfy the recursion
\begin{equation}\label{eq:recurcumr}
g^\pm_{n}(x)= x |\gamma| \frac{d}{dx}g^\pm_{n-1}(x)+ n g^\pm_{n-1}(x).
\end{equation}
with, for $|x|<1$, \begin{equation}
g^+_{0}(x)=\frac{1}{1-x}, \qquad g^-_{0}(x)=\frac{x}{1-x}.
\end{equation}
\end{prop}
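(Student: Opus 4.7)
The plan is to identify the cumulants as moments of the L\'evy measure and then expand the Mittag-Leffler kernel as a power series. Recall that for a positive infinitely divisible r.v.\ with L\'evy triplet $(0,0,\nu_X)$ whose Laplace exponent is analytic at the origin, one has
$$\kappa_n = \int_0^\infty x^n u_X(x)\,dx,\qquad n\ge 1.$$
The exponential factor $e^{-\theta x}$ in \eqref{eq:LM} ensures that all such integrals are finite whenever $\theta\neq 0$, which justifies the additional hypothesis in the statement.

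First I would substitute the expression for $u_X$ from Proposition \ref{LevyTPL} into this integral and expand the Mittag-Leffler function as a series. Treating both sign regimes in parallel, the bracket appearing in \eqref{eq:LM} can be written as
$$\sum_{k\ge k_0}\frac{\c^{\,k}\,x^{|\gamma|k}}{\Gamma(|\gamma|k+1)},\qquad k_0=\I_{\{\text{sgn}(\gamma)=-1\}},$$
since the subtracted indicator in the $\gamma<0$ case precisely cancels the $k=0$ term of $E_{|\gamma|}$. Term-by-term integration, legitimate by Tonelli because all integrands have a definite sign, combined with the elementary identity $\int_0^\infty x^{a-1}e^{-\theta x}\,dx=\Gamma(a)/\theta^a$, delivers
$$\kappa^\pm_n = \frac{|\gamma|\delta}{\theta^n}\sum_{k\ge k_0}\frac{\Gamma(n+|\gamma|k)}{\Gamma(|\gamma|k+1)}\left(\frac{\c}{\theta^{|\gamma|}}\right)^{\!k},$$
and I would take the series on the right-hand side as the definition of $g^\pm_{n-1}(\c/\theta^{|\gamma|})$.

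Next I would verify the initial conditions by specialising to $n=1$: because $\Gamma(1+|\gamma|k)/\Gamma(|\gamma|k+1)=1$ for every $k$, the two series reduce to $\sum_{k\ge 0}x^k$ and $\sum_{k\ge 1}x^k$, giving respectively $g^+_0(x)=1/(1-x)$ and $g^-_0(x)=x/(1-x)$ on the unit disk. The recursion \eqref{eq:recurcumr} would then follow by applying the operator $|\gamma|x\,d/dx+n$ term-by-term to the series for $g^\pm_{n-1}$: this action multiplies the coefficient of $x^k$ by $n+|\gamma|k$, and the functional equation $(n+|\gamma|k)\Gamma(n+|\gamma|k)=\Gamma(n+1+|\gamma|k)$ yields exactly the coefficient of $x^k$ in $g^\pm_n$.

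The only subtle point, rather than a genuine obstacle, is one of domain: the power series above converges only for $|x|<1$, whereas the cumulants remain finite throughout the parameter range of Proposition \ref{LevyTPL}. The statement circumvents this by specifying the $g^\pm_n$ through the differential recursion and giving the initial conditions on the unit disk only, leaving the extension to all admissible parameters to the analytic continuation of the rational functions generated by iterating \eqref{eq:recurcumr}.
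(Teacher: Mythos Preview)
Your argument is essentially the same as the paper's: both compute the cumulants as moment integrals of the L\'evy density from Proposition~\ref{LevyTPL}, expand the Mittag-Leffler function as a power series, integrate term by term against the gamma kernel, and then obtain the recursion by recognising that passing from $\Gamma(n+|\gamma|k)/\Gamma(|\gamma|k+1)$ to $\Gamma(n+1+|\gamma|k)/\Gamma(|\gamma|k+1)$ amounts to applying the operator $|\gamma|x\,d/dx+n$. One small caveat: your Tonelli justification only works in the $\gamma<0$ regime where $\c>0$; when $\gamma\in(0,1)$ the restriction $\lambda\theta^\gamma<1$ forces $\c<0$ and the series has alternating signs, so a Fubini/dominated-convergence argument is needed instead---the paper does not spell this out either, and your closing remark on analytic continuation of the rational functions $g_n^\pm$ is in fact a clarification the paper omits.
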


\begin{proof}
By differentiating the characteristic function,  the cumulants of a positive infinitely-divisible distribution can be seen to be given by the $n$-th moment integral (modulo adding the linear characteristic when $n=1$) of the L\'evy measure. In our case, recalling \eqref{eq:Bochner}, we have when $\gamma \in (0,1)$:
\begin{align}
\kappa^+_n & =\int_0^\infty x^n u_X(x) dx=\gamma \delta \int_0^\infty e^{-\theta x}\sum_{k=0}^\infty\frac{1}{\Gamma(k\gamma+1)}\, \c^{k} x^{\gamma k +n-1} dx \nonumber \\&=
\gamma \delta  \sum_{k=0}^\infty \frac{1}{\Gamma(k\gamma+1)}\,\c^{k} \int_0^\infty   x^{\gamma k+n-1} e^{-\theta x} dx \nonumber \\ &=  \frac{\gamma \delta}{\theta^n}  \sum_{k=0}^\infty \frac{\Gamma(k\gamma+n)}{\Gamma(k\gamma+1)}\,\left( \frac{\c }{\theta^\gamma}\right)^{k} = \frac{\gamma \delta}{\theta^n} \sum_{k=0}^\infty \left( \frac{\c }{\theta^\gamma}\right)^{k} (k \gamma +1)_{n-1} 
 \end{align}
which is a convergent series. The generating function of $g^+_n(x)= \sum_{k=0}^\infty (k \gamma +1)_{n} x^{k}$ of the sequence $a_k=(\gamma k+1)_n$ when $n$ is fixed can be treated as follows
\begin{align}\label{eq:pochrecur}
\sum_{k=0}^\infty (k \gamma +1)_{n} x^{k}&=
  \sum_{k=0}^\infty k \gamma (k \gamma +1)_{n-1} x^{k} +  n  \sum_{k=0}^\infty (k \gamma +1)_{n-1} x^{k}  \nonumber \\& = \gamma x  \frac{d}{dx}\left( \sum_{k=0}^\infty (k \gamma +1)_{n-1} x^{k} \right) + n \sum_{k=0}^\infty (k \gamma +1)_{n-1} x^{k}  \end{align}
Since $g^+_0(x)=1/(1-x)$,  \eqref{eq:recurcumr} follows in the positive $\gamma$ regime. 

%The case $\gamma<0$ follows a similar calculation

\noindent If instead $\gamma <0$ we have
\begin{align}
\kappa^-_n & =\int_0^\infty x^n u(x) dx=-\gamma \delta \int_0^\infty e^{-\theta x}\sum_{k=1}^\infty\frac{1}{\Gamma(-k\gamma+1)}\,  \c ^{k} x^{-\gamma k +n-1} dx \nonumber \\&=
-\gamma \delta  \sum_{k=1}^\infty \frac{1}{\Gamma(k\gamma+1)}\, \c ^{k} \int_0^\infty   x^{-\gamma k+n-1}e^{-\theta x} dx \nonumber \\&= -\frac{\gamma \delta}{\theta^n}  \sum_{k=1}^\infty \frac{\Gamma(-k\gamma+n)}{\Gamma(-k\gamma+1)}\,\left(\c \, \theta^\gamma \right)^{k} = \frac{(-\gamma) \delta}{\theta^n} \sum_{k=1}^\infty \left( \c \,\theta^\gamma \right)^{k} (-k \gamma +1)_{n-1}
 \end{align}
and the series again converges for all the admissible parameters values. Setting $g^-_n(x)= \sum_{k=1}^\infty (-k \gamma +1)_{n} x^{k}$  applying \eqref{eq:pochrecur} and observing $g^-_0(x)=x/(1-x)$ completes the proof.
\end{proof}

 From \eqref{eq:equation} we find the mean and variance  of a TPL r.v. $X$ to be
 
\begin{equation}
\mathrm E[X]=|\gamma|\delta \lambda \theta^{\gamma-1}, \qquad \mathrm{Var}[X]=\frac{\mathrm E[X]}{\delta} \left( \frac{1-\gamma}{\theta} + \mathrm E[X] \right)
\end{equation} 
which %match 
 correspond to those calculated in \cite{bar+al:16a} (albeit in a different parametrization).
 
\subsection{Self-decomposability and geometric infinite divisibility}\label{sec:sdgid}

A further property of an infinitely-divisible distribution is self-decomposability. A random variable $X$ is said to be self-decomposable if for all $\alpha \in (0,1)$ there exists a r.v. $X_\alpha$ independent from $X$ such that $X=^d\alpha X+X_\alpha$. 
%There exists several equivalent characterizations of self-decomposability, starting from the characteristic function, exponent or L\'evy structure.
 A self-decomposable distribution is known to be infinitely divisible, and absolutely continuous  with an absolutely continuous L\'evy density \citep{ste+vh:03}. Several  stochastic processes can be canonically constructed starting from a self-decomposable law, something that we shall exploit in Section \ref{sec:inhom} once the self-decomposable nature of a TPL r.v. is established.

Another property stronger than infinite divisibility is geometric infinite divisibility  introduced by \cite{kle+al:84}. A random variable $X$ is said to be geometrically infinitely-divisible (g.i.d.) if for any $p \in (0,1)$,  there exists a geometric random variable $G_p$ with probability mass function (p.m.f.)  \begin{equation}P(G_p=k)=p^{k-1}(1-p), \qquad k=1, 2, \ldots, \end{equation}   and i.i.d. r.v.s $Z_{n,p}$, $n = 1, 2, \ldots,$ such that
\begin{equation}\label{eq:gid}
X=^d\sum_{n=1}^{G_p} Z_{n,p}.
\end{equation}
For example a %A PL$(\gamma,\lambda,\delta)$
Mittag-Leffler random variable is g.i.d., %as an easy
 %generalization of  
as shown in  \cite{lin:98} Remark 2. % shows.
 For other properties of the g.i.d. random variables see \cite{kle+al:84}, \cite{kal:97} and \cite{koz+rac:99}.

\begin{prop}\label{selfecomp}
 A {\upshape TPL}$(\gamma, \lambda, \delta, \theta)$ r.v. with $\gamma \in (0,1]$ is self-decomposable. A {\upshape TPL}$(\gamma, \lambda, 1, \theta)$ random variable is g.i.d. for all admissible values of $\gamma$.
\end{prop}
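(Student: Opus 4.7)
The two claims call for rather different arguments, so I would treat them separately. For self-decomposability I would invoke the classical L\'evy-density criterion (\citealt{sat:99}, Theorem~15.10): a positive infinitely-divisible law with L\'evy density $u_X$ is self-decomposable iff $k_X(x):=x\,u_X(x)$ is non-increasing on $(0,\infty)$. The case $\gamma=1$ is immediate, since a TPL law then collapses to a Gamma law, whose self-decomposability is classical.

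For $\gamma\in(0,1)$, Proposition~\ref{LevyTPL} gives
\[
k_X(x)=\gamma\delta\, e^{-\theta x}\, E_\gamma(\c\, x^\gamma),
\]
with $\c=(\lambda\theta^\gamma-1)/\lambda$. I would show that $k_X$ is in fact completely monotone---hence, a fortiori, non-increasing---by assembling three standard facts: (i) Pollard's theorem, asserting that $t\mapsto E_\gamma(-t)$ is completely monotone on $[0,\infty)$ for $\gamma\in(0,1)$; (ii) the composition of a completely monotone function with a Bernstein function is completely monotone, applied to the Bernstein map $x\mapsto|\c|\,x^\gamma$ (valid when $\c\le 0$); and (iii) the product of completely monotone functions is completely monotone, applied to the factor $e^{-\theta x}$. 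The main technical subtlety I anticipate lies in the regime $\lambda\theta^\gamma>1$, where the series derivation of the explicit density~\eqref{eq:LM} breaks down; there one must either extend $u_X$ by analytic continuation in $\lambda$, or approximate from the sub-regime where~\eqref{eq:LM} applies and exploit closure of the self-decomposable class under weak convergence together with continuity of the TPL family in its parameters.

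For the g.i.d.\ claim the approach is purely algebraic. Set $\psi(s):=\mathrm{sgn}(\gamma)\,\lambda((\theta+s)^\gamma-\theta^\gamma)\ge 0$, so that a TPL$(\gamma,\lambda,1,\theta)$ variable has $L_X(s)=1/(1+\psi(s))$. A short manipulation yields
\[
\frac{L_X(s)}{1-p+p\,L_X(s)}=\frac{1}{1+(1-p)\,\psi(s)},
\]
which is precisely the Laplace transform of a TPL$(\gamma,(1-p)\lambda,1,\theta)$ law, an admissible element of $S$ for every $p\in(0,1)$ and every admissible $\gamma$. Defining $(Z_{n,p})_{n\ge 1}$ as i.i.d.\ copies of this distribution with common Laplace transform $L_{Z_p}$, the compound-geometric identity $\mathrm{E}[e^{-s\sum_{n=1}^{G_p}Z_{n,p}}]=(1-p)L_{Z_p}(s)/(1-p\,L_{Z_p}(s))$ evaluates back to $L_X(s)$, producing the representation~\eqref{eq:gid} and proving g.i.d.\ for all admissible $\gamma$.
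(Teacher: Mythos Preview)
Your g.i.d.\ argument is correct and in substance coincides with the paper's. The paper invokes the Klebanov--Maniya--Melamed criterion, checking that $1-1/L_X(-iz)$ equals the Fourier exponent of an infinitely-divisible law (here TPS$(\gamma,\lambda,\theta)$); you instead exhibit the geometric summands explicitly as TPL$(\gamma,(1-p)\lambda,1,\theta)$ variables and verify~\eqref{eq:gid} by direct substitution. Both routes rest on the same algebraic identity; yours is slightly more concrete in that it names the summand law.

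For self-decomposability your route is genuinely different. The paper does not compute with the L\'evy density at all: it cites that exponential tilting preserves self-decomposability (\citealt{ste+vh:03}, Proposition~V.2.14) and thereby reduces to the untempered case $\theta=0$, i.e.\ the classical self-decomposability of the PL law. This is a one-line reduction and avoids Pollard's theorem and the explicit formula~\eqref{eq:LM} entirely.

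Your Pollard-based argument is valid when $\c\le 0$, i.e.\ $\lambda\theta^\gamma\le 1$ (and in the cases $\gamma=1$ or $\theta=0$), but the workarounds you sketch for $\lambda\theta^\gamma>1$ do not close the gap. The weak-convergence route fails outright: the regions $\{\lambda\theta^\gamma<1\}$ and $\{\lambda\theta^\gamma>1\}$ are separated in parameter space, so no sequence of TPL laws from the former converges to a law in the latter. The analytic-continuation route may extend the formula $k_X(x)=\gamma\delta\,e^{-\theta x}E_\gamma(\c\,x^\gamma)$ to $\c>0$, but monotonicity would still require a separate argument---with $\c>0$ the Mittag-Leffler factor is increasing and neither Pollard's theorem nor the Bernstein-composition trick applies. (It is worth noting, in fairness, that the paper's tilting reduction faces the same parameter-range subtlety: by~\eqref{eq:tilt} a TPL law is an Esscher tilt of a PL law only when $\lambda\theta^\gamma<1$.)
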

\begin{proof}

Regarding self-decomposability, according to \cite{ste+vh:03}  Proposition V.2.14, it is sufficient to check the case $\theta=0$, i.e. to show self-decomposability of a PL law. This is well-known (see e.g. \citealt{chr+sch:01}, Section 1.2). %: in our setup it can be verified on account  of \cite{ste+vh:03}, Theorem V.6.12  by direct inspection of \eqref{eq:LM} when $\theta=0$, $\gamma \in (0, 1)$. 
%and a short argument goes as follows. By  Steutel and Van Harn, a positive law is self-decomposable only if the product $x g(x)$, where $g$ is the L\'evy density, is decreasing. Replacing $\theta=0$ in \eqref{eq:LM} is allowed under the given restriction on $\gamma$, and we see that the numerator of the resulting expression is a Mittag-Leffler function with negative argument, which is a decreasing function.

\smallskip

\noindent To show geometric infinite-divisibility observe that %by virtue of 
 as observed by \cite{kle+al:84}, Theorem 2, a distribution is g.i.d. if and only if its characteristic function $\psi_X(z)$ is such that  $1-1/\psi_X(z)$ is a characteristic Fourier exponent $\phi_R(-i z)=-\log(L_R(-i z))$, $z \in \mathbb C$, of an infinitely-divisible r.v. $R$. However, if $X
$ has distribution \text{TPL}$(\gamma, \lambda, 1, \theta)$ we have  \begin{equation}
1-\frac{1}{L_X(-iz)}=\text{sgn}(\gamma)\lambda((\theta - iz )^\gamma-\theta^\gamma)
\end{equation}
which is the Fourier characteristic exponent of a TPS$(\gamma, \lambda,\theta)$ law. 
\end{proof}
If $\gamma<0$ then a TPL r.v. is not self-decomposable since it is not absolutely continuous. The proposition above together with \eqref{eq:tilt} clarifies the interpretation of TPL laws as ``geometric'' analogues of TPS laws, or their ``geometric versions'', in the terminology of \cite{san+pil:14}. 

In analogy with stable laws one may wish to investigate the stability condition for $p \in (0,1)$ and some $\alpha>0$
\begin{equation}\label{eq:geomstable}
X=^d p^{1/\alpha}\sum_{n=1}^{G_p} Z_{n,p}
\end{equation}
(see \citealt{kal:97}) where $X$ has $Z_{1,p}$ distribution. A r.v. satisfying \eqref{eq:geomstable} is said to be geometrically strictly stable (\citealt{kle+al:84}, Definition 2). Applying  \cite{lin:98} Remark 2 shows that PL$(\gamma, \lambda, 1)$ (i.e. Mittag-Leffler) r.v.s satisfy  \eqref{eq:geomstable}. However, a  TPL$(\gamma, \lambda, 1, \theta)$ r.v. with $\theta >0$ does not. Indeed \eqref{eq:geomstable} is very close to characterizing Linnik distributions, and does in fact characterize symmetric or positive ones (\citealt{lin:94},    \citealt{lin:98}).% shows indeed that the stability condition \eqref{eq:geomstable} does indeed characterize the Mittag-Leffler law. %This justifies the terminology of a symmetric Linnik random variable as a \emph{geometrically stable} law.

\medskip

%NOTE. WE WILL CLARIFY LATER THIS IS THE LEVY MEASURE OF THE DRIVING LEVY PROCESS BEHIND THE STATIONARY DISTRIBUTION OF A OU PROCESS. THE ABOVE ALSO MEANS THAT SUCH DRVING LEVY PROCESS IS NOT POISSON. INTERESTING, I THOUGHT DRIVING LEVY PROCESS  INSTEAD HAD TO BE CPPs!  OR: WRONG! SEE THE REMARK ABOVE!

%\end{proof}

%The density $w(x)$ will appear in the context of a L\'evy driven TPL stationary Ornstein-Uhlenback proces.

\section{TPL L\'evy subordinators and their representation}\label{sec:levy}

%\subsection{The TPL L\'evy process and subordination}

Being the set of the TPL distributions an infinitely-divisible class, by the general theory for a given TPL$(\gamma, \lambda, \delta, \theta)$ law on  $(\Omega,\mathcal{F}, P)$ there exists a unique in law increasing L\'evy process (L\'evy subordinator) $X=(X_t)_{t \geq 0}$ %supported on some filtration $(\mathcal F_t)_{t \geq 0}$
supported on some filtered probability space $(\Omega, \mathcal F, \mathcal F_t, P)$
 such that $X_1$ has one such prescribed law. 
  Furthermore
\begin{equation}
L_{X_t}(s)=\E[e^{-s X_t}]=e^{-t\phi_X(s)}
\end{equation}
and therefore the r.v.s $X_t$ have    \text{TPL}$(\gamma, \lambda, t \delta, \theta) $ distribution.

The Laplace exponent of the L\'evy subordinator is by definition the Laplace exponent of its unit time marginal. Henceforth,   when we refer to a L\'evy process using a distribution, we mean the L\'evy process having such distribution as unit time margin.  The L\'evy measure of a process is the L\'evy measure of its unit time margin. Unless otherwise stated, when we write equality of processes we mean equality of the finite-dimensional distributions.

\smallskip
%The process $X=(X_t)_{t \geq 0}$ is thus a L\'evy subordinator since both $Y$ and $Z$ are.

 The TPL process $X$ enjoys a plethora of different representations. The main one is provided directly by Proposition \ref{LevyTPL} and is given by L\'evy subordination. Since the characteristic exponent of $X$ is the composition of the characteristic exponents of a gamma law and a TPS law then for all $t$:
\begin{equation}
L_{X_t}(s)=e^{-t \phi_Z(\phi_Y(s))} 
\end{equation}
and therefore by a familiar conditioning argument
\begin{equation}\label{eq:standardTC}
X_t=^dY_{Z_t}.
\end{equation}
In other words $X$ can be represented as a TPS$(\gamma, \lambda, \theta)$ process $Y=(Y_t)_{t \geq 0}$ (a tempered stable subordinator) subordinated to an independent $G(1, \delta)$ subordinator $Z=(Z_t)_{t \geq 0}$.  We indicate subordination of a process $Y$ to $Z$ with $X=Y_Z$.
\smallskip

\begin{oss}\label{equivrep}
Associating differently the scale parameter, a fully equivalent representation in distribution for the TPL process is of the form $X= Y'_{Z'}$ where $Y'$ is a TPS$(\gamma, 1, \theta)$ process and $Z'$ a $G(\lambda, \delta)$ independent subordinator. %This is the choice in Kumar et al. 2017
 \end{oss}

%} %is 

%\begin{equation}\label{eq:TSlevy}
%\nu^Y(dx)=\frac{|\gamma|\lambda}{\Gamma(1-\gamma)}\frac{e^{-\theta x}}{x^{\gamma+1}} I_{\{x>0\}} dx
%\end{equation}
From \eqref{eq:TPSce} we have that $\int_0^\infty \nu^Y(dx)=\infty$ or $\int_0^\infty \nu^Y(dx)<\infty$ depending on whether $\gamma <0$ or $\gamma \in (0,1]$. %Therefore the nature of $\gamma$ affects the integrability of the L\'evy measure $\nu^Y$ of $Y$.
In the latter case the process $Y$ is of finite activity, that is, $Y$ is a compound Poisson process (CPP). Furthermore, as already observed:

\begin{equation}
\phi_Y(s)=\lambda \theta^\gamma (1-e^{-\phi_{Z }(s)})
\end{equation}
where $Z$ is a $G(-\gamma, 1/\theta)$ r.v.. Thus $Y$ is a CPP  that can be written explicitly as

\begin{equation}\label{eq:TSCPP}
Y_t=\sum_{n=0}^{N_t} Z_n,
\end{equation}
where $Z_n$, $n \geq 0$ are i.i.d. with same  distribution as $Z$ and $(N_t)_{t\geq 0}$ is a Poisson process of rate $\lambda \theta^\gamma$ independent of the  $Z_n$s.

\smallskip

The case $\gamma <0$ is of particular interest for data modelling (see e.g. \citealt{bar+al:16b}). 
In such a case  $Y$ is of finite activity and the representation above holds we shall write $ Y_-$, and $Y_+$ when instead $\gamma \in (0,1]$. Correspondingly we define  $X_-$ and $X_+$. Although these L\'evy process have different path properties representation \eqref{eq:standardTC} holds in both cases.

\smallskip

\subsection{Compound Poisson representation and the logarithmic Mittag-Leffler distribution}\label{shiftedMLdis}

Since $Y_-$ is a driftless CPP process, then equation \eqref{eq:Bochner} and Fubini's Theorem imply that $X_-$ must be a CPP too. In the following we explicitly identify its structure.

% Then one may ask what the explicit compound Poisson representation for $X_-$ might be, including the nature of the jump distribution.

In applications
 the following class of functions is of interest %it is often of interest the Laplace transform of
\begin{equation}\label{eq:MLfunction}
   p(x;a,b, c, \alpha, \beta)= x^{a-1} E_{\alpha, \beta}(c x^b),  \qquad x, a,b >0, \; c \in \mathbb R,\; \text{Re}(\alpha), \text{Re}(\beta) >0,
\end{equation}
which are often seen to appear in connection with  the solution of fractional differential problems.  We can express the Laplace transform $L(\cdot;x,s)$, 
Re$(s)>0$, of \eqref{eq:MLfunction} in terms of the Fox-Wright function (e.g. \citealt{wri:35}) as follows (\citealt{mat+ara:08}, equation 2.2.22):

\begin{align}\label{eq:MLfunctionFW}
   L(p(x;a,b, c, \alpha, \beta); x,s)=&\int_0^\infty e^{-s x} x^{a-1} E_{\alpha, \beta}(c x^b) dx=\sum_{k = 0}^\infty \frac{\Gamma(a k +b )}{\Gamma(\alpha k +\beta)}\frac{c^k}{s^{a k +b} } \nonumber \\ =& \frac{1}{s^{b} } \phantom{c}   _2\Psi_1 \left[ \begin{array}{cc}  (1,1) & (b,a) \\  (\beta, \alpha) & \end{array} ;   \frac{c}{s^a}\right],   \qquad |c|<|s^a|.
\end{align}
This latter expression is not always the transform of a probability function. For example in  %when $\beta=b$, $\alpha=a$
\begin{align}\label{eq:MLtransform}
   L(p(x;a,b, -1, a, b); x,s)=\frac{s^{a-b}}{s^a + 1},
\end{align}
a function which is pivotal to fractional calculus and its applications (e.g. \citealt{hau+al:11}),
 we have that as $s \rightarrow 0$, then $L(p(x;a,b,-1, b, 1), x,s)\rightarrow 1$ if and only if $a=b$, in which case the associated distribution is the Mittag-Leffler distribution. %probability distribution introduced by Possamai.

However we can exponentially %tilt 
temper \eqref{eq:MLfunction} by $\theta>0$ obtaining, with slight notation abuse,
\begin{equation}\label{eq:MLfunctiontilted}
   p(x;a,b,c, \alpha, \beta,\theta)= e^{-\theta x} x^{a-1} E_{\alpha, \beta}(c x^b),  \qquad x, a,b , \theta>0, \, c \in \mathbb R, \, \text{Re}(\alpha),\, \text{Re}(\beta) >0,
\end{equation}
which in turn after applying the shifting rule determines the Laplace transform

\begin{align}\label{eq:MLfunctiontiltedLaplace}
   L(p(x;a,b, c, \alpha, \beta,\theta ); x,s)= \frac{1}{(s+\theta)^{b} }   \phantom{c}   _2\Psi_1 \left[ \begin{array}{cc}  (1,1) & (b,a) \\  (\beta, \alpha) & \end{array} ;   \frac{c}{(s+\theta)^a}\right],   \qquad |c|<|(s+\theta)^a|.
\end{align}
As $s \rightarrow  0$ the limit of the expression of the above is always finite, so after appropriate normalization, $p(x;a,b, c, \alpha, \beta,\theta )$ determines a probability distribution. In particular if we let $a=b=\alpha$, $\beta=a+1$, $|c|<\theta^a$ we have:

\begin{align}\label{eq:LMLfunction}
   L(p(x;a,a, c, a, a+1,\theta ), x,s) &= \frac{1}{a c}\sum_{k = 0}^{\infty} \frac{1}{k+1}\left(\frac{c}{(\theta+s)^a}\right)^{k+1} \nonumber \\ &= -\frac{1}{a c }{\log \left(1-  c (s+\theta)^{-a} \right) .} 
\end{align}
Therefore,  defining the normalizing constant \begin{equation}
n(a,c, \theta)=-\frac{ a c}{\log \left(1-  c \theta^{-a}\right)},
\end{equation}
we conclude that $n(a,c, \theta) p(x;a,a, c, a, a+1,\theta )$ is a p.d.f. of known Laplace transform. Calculating this product explicitly we can introduce the following probability distribution.

\begin{defn} The logarithmic Mittag-Leffler  (LML) distribution is the absolutely-continuous family of distributions {\upshape LML}$(a, c, \theta)$, $a,  \theta>0$, $|c|<\theta^a$,  such that an LML r.v. $W$ has p.d.f.
 \begin{align}\label{eq:LMLdef}
f_W(x;a,c,\theta)  = -\frac{ a c}{\log \left(1-  c \theta ^{-a}\right)}e^{-\theta x } x^{a-1} E_{a, a+1}(c 
x^a) \I_{\{x>0 \}}
 \end{align}  
 and Laplace transform
 \begin{equation}
 L_W(s)= \frac{\log \left(1-  c (s+\theta)^{-a} \right)}{ \log \left(1-  c \theta ^{-a}\right)}, \qquad \text{{\upshape Re}}(s)>0.
 \end{equation}
\end{defn} 
 
% We term this probability law the \emph{logarithmic Mittag--Leffler distribution}, in analogy with the logarithmic discrete random variable,  %motivated that the fact that the $\beta$ parameter in the two--parameters Mittag--Leffler distribution appearing in the p.d.f. equals the parameter  $\alpha$ plus one. 
%and indicate the corresponding family with LML$(a, c,  \theta)$

\medskip

The terminology is motivated by the similitude of the Laplace transform of this distribution with that of the discrete logarithmic probability law. This distribution is not a generalization of the Mittag-Leffler law as it does not admit it as a particular case, nor does it admit the degenerate case $\theta=0$.  Hence it is structurally different from a TPL law. As it turns out, the LML law arises naturally in the CPP structure of $X_-$. % we have the following explicit form.% The \emph{logarithmic} Mittag Leffler is 

\begin{prop}\label{CPPTPL}The process $X_-$ admits the {\upshape CPP} representation

\begin{equation}
X_{-_{\, t}}=\sum_{n=0}^{N_t}J_n
\end{equation}
where $(N_t)_{t\geq 0}$ is a Poisson process  of rate
$ \delta \log(1+ \lambda \theta^\gamma)$ 
and $(J_n)_{n \geq 0}$, is an i.i.d. sequence of random variables having {\upshape LML}$(-\gamma, \c, \theta)$ distribution, where $\c$ is given by \eqref{c}.

    %  are i.i.d. positive random variables with generalized Mittag-Leffler distribution given by the p.d.f. 
%\begin{equation}\label{eq:g}
%g(x)=-\frac{\gamma}{\log(1+\lambda \theta^\gamma)}  e^{-\theta x}x^{-\gamma-1}\frac{\lambda}{1+\lambda\theta^\gamma }  E_{-\gamma,-\gamma+1}\left(\frac{\lambda}{1+\lambda\theta^\gamma } x^{-\gamma}\right).
%\end{equation}
%MAYBE THE JUMPS CAN BE WRITTEN AS A MULTIPLE OF A R.V. WITH A SIMPLER PDF?
\end{prop}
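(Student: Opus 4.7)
My plan is to compute the characteristic exponent $\phi_{X_-}$ directly from the composition formula $\phi_{X}=\phi_Z\circ\phi_Y$ of Proposition \ref{LevyTPL}, then algebraically massage it until it matches the known Laplace-exponent template of a compound Poisson process with LML-distributed jumps. Since $X_-$ is already known to be of finite activity (because $Y_-$ is), the only task is to identify the intensity of the driving Poisson process and the law of its excursions.

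Concretely, with $\gamma<0$, the TPS characteristic exponent reads $\phi_Y(s)=\lambda(\theta^\gamma-(\theta+s)^\gamma)$, whence
\begin{equation*}
\phi_{X_-}(s)=\delta\log\bigl(1+\phi_Y(s)\bigr)=\delta\log\bigl(1+\lambda\theta^\gamma-\lambda(\theta+s)^\gamma\bigr).
\end{equation*}
Factoring $(1+\lambda\theta^\gamma)$ out of the argument of the logarithm and recalling that for $\gamma<0$ one has $\c=\lambda/(1+\lambda\theta^\gamma)$, I obtain
\begin{equation*}
\phi_{X_-}(s)=\delta\log(1+\lambda\theta^\gamma)+\delta\log\bigl(1-\c(\theta+s)^\gamma\bigr),
\end{equation*}
and using $1-\c\theta^\gamma=(1+\lambda\theta^\gamma)^{-1}$ this rewrites as
\begin{equation*}
\phi_{X_-}(s)=\delta\log(1+\lambda\theta^\gamma)\left(1-\frac{\log\bigl(1-\c(\theta+s)^\gamma\bigr)}{\log(1-\c\theta^\gamma)}\right).
\end{equation*}

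Next I compare this with the general CPP template $\phi(s)=\mu(1-L_J(s))$, which immediately reads off the intensity $\mu=\delta\log(1+\lambda\theta^\gamma)$ and the jump Laplace transform $L_J(s)=\log(1-\c(\theta+s)^\gamma)/\log(1-\c\theta^\gamma)$. Setting $a=-\gamma>0$ in the formula for the LML Laplace transform given in the definition of LML$(a,c,\theta)$, and taking $c=\c$, shows that $L_J$ is precisely the Laplace transform of an LML$(-\gamma,\c,\theta)$ random variable, so $J_n\sim\text{LML}(-\gamma,\c,\theta)$.

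No step is really hard here; the only mildly subtle point is the bookkeeping in the factoring that produces the constant $\c$ in the correct form, since the sign conventions in \eqref{c} differ between the two $\gamma$-regimes. A fully parallel proof is available via L\'evy measures: starting from \eqref{eq:LM} for $\gamma<0$, integrating termwise gives $\int_0^\infty u_{X_-}(x)\,dx=-\delta\log(1-\c\theta^\gamma)=\delta\log(1+\lambda\theta^\gamma)$, and normalising $u_{X_-}$ by this mass yields, after the identity $E_{a}(y)-1=yE_{a,a+1}(y)$ (applied with $a=-\gamma$ and $y=\c x^{-\gamma}$), exactly the density $f_W(\,\cdot\,;-\gamma,\c,\theta)$ from \eqref{eq:LMLdef}. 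Either route completes the proof.
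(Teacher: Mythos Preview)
Your proof is correct. Your primary route, however, is genuinely different from the paper's. The paper works at the level of L\'evy \emph{densities}: starting from the expression \eqref{eq:LM} for $u_{X_-}$, it invokes the Mittag--Leffler identity $E_a(y)-1=y\,E_{a,a+1}(y)$ (their citation of Haubold et al.) to rewrite $u_{X_-}$ in the form of a finite constant times the LML density $f_W(\,\cdot\,;-\gamma,\c,\theta)$, and then simply reads off the intensity as that constant. You instead work entirely at the level of characteristic \emph{exponents}: you compute $\phi_{X_-}=\phi_Z\circ\phi_Y$ in closed form, factor the logarithm, and match against the CPP template $\mu(1-L_J(s))$ using the known Laplace transform of the LML law. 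This is more elementary---no special-function identities are needed, and the CPP structure (drivelessness, finite activity, jump law) all drop out of a single algebraic identity. The paper's approach, on the other hand, makes the jump \emph{density} transparently visible rather than just its transform, which is arguably the point of introducing the LML law. The secondary route you sketch in your final paragraph is essentially the paper's argument.
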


\begin{proof} Because of \eqref{eq:Bochner} $X_-$ is  driftless,  being the gamma subordination of  the driftless CPP in \eqref{eq:TSCPP}.  Using \cite{hau+al:11} Theorem 5.1 in the negative determination of \eqref{eq:LM}, we have the equivalent expression for the L\'evy density of $X_-$:

\begin{equation}\label{eq:newu}
u_{X_-}(x)=-\gamma \delta \c  e^{-\theta x}x^{-\gamma-1} E_{-\gamma,-\gamma+1}(\c x^{-\gamma})\I_{\{ x>0\}}.
\end{equation}
%Recalling \eqref{eq:LMLpdf} the claim immediately follows after normalization.
%In view of \eqref{eq:newu}, to verify that \eqref{eq:g} is the p.d.f. we require it is necessary and sufficient to show that the expression for $u(x)$ integrates to the Poisson intensity. The  series of gammas in this case reads:
%\begin{align}
%\int_0^\infty u(x)dx=&-\gamma \delta \sum_{k=0}^\infty  \frac{\c^{k+1}}{\Gamma(-\gamma k -\gamma+1)}  \int_0^\infty e^{-\theta x} x^{-\gamma(k+1)-1} dx \nonumber \\=& -\delta \sum_{k=0}^\infty  \frac{(c_{\lambda,\gamma,\theta } \theta^\gamma)^{k+1}}{k+1}=-\delta \log(1-c_{\lambda,\gamma,\theta } \theta^\gamma)=\delta \log(1+ \lambda \theta^\gamma).
%\end{align}
But $\c\theta^\gamma<1$ %and \begin{equation}\label{eq:normal}n(-\gamma,\c, \theta )=\frac{-\gamma \c}{\log(1+ \lambda \theta^\gamma)} \end{equation} 
so that from  \eqref{eq:newu} and \eqref{eq:LMLdef} we have  
\begin{equation}
u_{X_-}(x) =\delta \log(1+ \lambda \theta^\gamma)f_W(x;-\gamma, \c, \theta)\I_{\{ x>0\}}
\end{equation} %the product of the required p.d.f. and Poisson rate after dividing and multiplying by \eqref{eq:normal}.  
which proves the proposition.
\end{proof}

Another type of tilted Mittag-Leffler distribution following the construction outlined in this section will appear in Section \ref{sec:inhom}.

% and interestingly, besides \eqref{eq:standardTC} two other subordinated representations can be devised. 

\subsection{Stochastic self-similarity and negative binomial subordination}

The TPL processes subordinated structure is extremely rich and  and, for reasons which we shall shortly explore, mostly revolves around the negative binomial subordinator. We introduce a lattice-valued version of the L\'evy subordinator $B=(B_t)_{t\geq 0}$ with unit time distribution in the family of laws NB$(\pi, \kappa, \alpha, \mu)$  given by the  Laplace transform
\begin{equation}\label{eq:NBlaplace}
L_{B}(s)= \left(\frac{\pi }{1-(1-\pi) e^{-\alpha s}} \right)^\kappa e^{- \mu s}, \qquad \pi \in (0,1), \kappa, \alpha >0, \mu \in  \mathbb R, \text{Re}(s)>0.
\end{equation}
The above law is a scale-location modification of the  negative binomial law, and it thus gives raise to an infinitely-divisible distribution. Taking the logarithm of $L_{B}$ and considering the corresponding characteristic exponent we see that $B$ is such that $B_t$ has NB$(\pi, \kappa t, \alpha, \mu)$ distribution and it can be represented as a CPP with drift as follows:

\begin{equation}
B_t=\sum_{n=0}^{N_t} J_n + \mu t
\end{equation}
with the $J_n$ being i.i.d distributed r.v.s with lattice-valued logarithmic probability mass function 
\begin{equation}\label{eq:logdis}
 P(J_n= \alpha  k)=\frac{(1-\pi)^k}{- k \log \pi}, \qquad k = 1,2, \ldots,
\end{equation}
and $N=(N_t)_{t\geq 0}$ is an independent Poisson process of intensity $-\kappa \log \pi$.  For these and other properties of the  negative binomial subordinator 
see \cite{koz+pod:09}.

Negative binomial processes appear naturally in connection to the concept of stochastic self-similarity introduced in
\cite{koz+al:06}, Definition 4.1. Let $X=(X_t)_{t\geq 0}$ be any stochastic process and assume that there exists a family of processes $T^c=\{(T^c_t)_{t\geq 0}, c>1 \}$ almost surely increasing and diverging as $t \rightarrow \infty$ such that
\begin{equation}\label{eq:sss}
X_{T^c_t} =^d c^H X_t
\end{equation}
 for some $H>0$. Then $X$ is said to be  stochastically self-similar of index $H$ with respect to $T^c$.

%Stochastic self-similarity is intimately related to geometric infinite-divisibility and in particular to the stability property. To illustrate this correspondence, let $X$ be an  g.i.d. variable and $Z_i$ r.v.s of common distribution $Z$ satisfying \eqref{eq:gid}.  Conditioning under independence one sees that the g.i.d. property is equivalent to:
%\begin{equation}\label{eq:geomss}
%L_X(z)=\frac{p L_Z(z)}{1-(1-p)L_Z(z)}=L_{NB}( L_Z(z))
%\end{equation}
%where $\psi_{NB}$ is the characteristic function of a $NB(p, 1, 1)$ random variable,
%where NB $\sim B(p, 1, 1)$  is the geometric law of parameter $p$. Therefore if in \eqref{eq:geomss} we enforce geometric stability condition \eqref{eq:geomstable} it must be $L_X(z)=L_Z(p^{-1/\gamma} z)$ for all $p\in (0,1)$. In view of \eqref{eq:sss} this is  equivalent to  stochastic self--similarity  of the L\'evy process with unit time distribution $Z$ with respect to the NB$(p,1,1)$, for any index $H>0$. %with $H=1/\gamma$. Lin 1998 shows that the only solution in $\psi_Z$ for all $p \in (0,1)$ with positive support is the characteristic exponent of a PL$(\gamma, \lambda, \delta)$. In any case \emph{any} g.i.d distribution retains a negative-binomial subordinated representation of the form \eqref{eq:geomss}. We explore this relationship further in the following. See also Kozubowski et al. 2006 Section 4.4.

%Based on this principle,  

Stochastic self-similarity is intimately related to geometric infinite-divisibility and in particular to the stability property. Based on this relationship we establish a general invariance property of the PL processes which extends \cite{koz+al:06}, Proposition 4.2, and which in particular implies that Linnik processes are stochastically self-similar with respect to families of negative binomial processes
  (see also \citealt{bn+al:01}, Example 2.2).
 
 \begin{prop}\label{sssTPL}Let $X=(X_t)_{t \geq 0}$ be a {\upshape PL}$( \gamma, \lambda, \delta)$ L\'evy process, let $H>0$ and let $B^c=\{(B^{c}_t)_{t\geq 0}, c > 1 \}$ be a family of  {\upshape NB}$(c^{-H}  , \kappa, 1/\delta, \kappa/\delta )$ subordinators independent of $X$. Then  $Y= X_{B^c}$ 
 %\begin{equation}
 %Y_t=X_{T^c_t}
 %\end{equation}
is a {\upshape PL}$(\gamma, \lambda c^{H}, \kappa  )$ process for all $c>1$. In particular, $X$ is stochastically self-similar for all indices $H>0$ with respect to $B^c$.%:=(T^{c}_t)_{t\geq 0, c\geq 1 }$ %where $T^{c}_1 \sim \delta^{-1}NB( 1/c^\gamma  , \delta, \delta )$ is a family independent of $X$. 
 \end{prop}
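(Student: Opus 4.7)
The plan is to apply Bochner's subordination formula. Since $B^c$ is a L\'evy subordinator independent of the L\'evy process $X$, the subordinated process $Y=X_{B^c}$ is itself a L\'evy process whose Laplace exponent is the composition $\phi_Y=\phi_{B^c}\circ\phi_X$ (cf.\ Theorem 5.27 in \citealt{sch+von:12}). Hence it suffices to identify the Laplace exponent of $Y_1$ and verify that it coincides with that of a PL$(\gamma,\lambda c^H,\kappa)$ law.

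First, from \eqref{eq:PLdef}, $\phi_{X_1}(s)=\delta\log(1+\lambda s^\gamma)$. Second, inserting $\pi=c^{-H}$, $\alpha=1/\delta$, $\mu=\kappa/\delta$ into \eqref{eq:NBlaplace} and taking minus the logarithm yields
\begin{equation*}
\phi_{B^c_1}(s)=\kappa H\log c+\kappa\log\bigl(1-(1-c^{-H})e^{-s/\delta}\bigr)+\kappa s/\delta.
\end{equation*}
Third, I substitute $u=\phi_{X_1}(s)$ into $\phi_{B^c_1}$, using that $e^{-u/\delta}=(1+\lambda s^\gamma)^{-1}$ and $u/\delta=\log(1+\lambda s^\gamma)$. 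Combining the two middle logarithms,
\begin{equation*}
\phi_Y(s)=\kappa H\log c+\kappa\log\!\frac{c^{-H}+\lambda s^\gamma}{1+\lambda s^\gamma}+\kappa\log(1+\lambda s^\gamma)=\kappa\log\!\bigl(c^H(c^{-H}+\lambda s^\gamma)\bigr)=\kappa\log(1+\lambda c^H s^\gamma),
\end{equation*}
which is the Laplace exponent of a PL$(\gamma,\lambda c^H,\kappa)$ law, establishing the first assertion.

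For the self-similarity claim I would specialise to $\kappa=\delta$ and compare $Y_t=X_{B^c_t}$, of law PL$(\gamma,\lambda c^H,\delta t)$, with the scaled marginal $c^{H/\gamma}X_t$, whose Laplace exponent is $\delta t\log\bigl(1+\lambda(c^{H/\gamma}s)^\gamma\bigr)=\delta t\log(1+\lambda c^H s^\gamma)$. Equality of Laplace transforms gives $X_{B^c_t}=^d c^{H/\gamma}X_t$, which is the relation \eqref{eq:sss} with index $H/\gamma$; as $H$ sweeps $(0,\infty)$, so does $H/\gamma$, yielding self-similarity at every positive index. The two regularity conditions of the definition are automatic: $B^c$ is a L\'evy subordinator hence a.s.\ non-decreasing, and it diverges as $t\to\infty$ because it carries the strictly positive drift $\kappa/\delta$.

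I expect the only delicate point to be the algebraic collapse in the composition step, where three logarithmic contributions must cancel to produce the PL exponent; the role of the location parameter $\mu=\kappa/\delta$ is crucial, since the accompanying factor $e^{-u/\delta}$ is precisely what neutralises the spurious $-\kappa\log(1+\lambda s^\gamma)$ term arising from the negative binomial exponent. Everything else is bookkeeping once Bochner composition is invoked.
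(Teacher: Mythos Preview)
Your argument is correct and follows essentially the same route as the paper: both proofs compute the Bochner composition $\phi_{B^c}\circ\phi_X$ directly from \eqref{eq:PLdef} and \eqref{eq:NBlaplace}, simplify the resulting expression to $\kappa\log(1+\lambda c^H s^\gamma)$, and then specialise to $\kappa=\delta$ for the self-similarity claim. The only differences are cosmetic: the paper keeps the NB exponent as a single logarithm $-\kappa\log\bigl(c^{-H}e^{-\phi_X(s)/\delta}/[1-(1-c^{-H})e^{-\phi_X(s)/\delta}]\bigr)$ and inverts the fraction, whereas you expand it into three additive terms before recombining; and you spell out the self-similarity index $H/\gamma$ together with the monotonicity and divergence of $B^c$, which the paper leaves implicit in the sentence ``Stochastic self-similarity follows setting $\kappa=\delta$.''
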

 
 \begin{proof}
 The first claim is equivalent to $\phi_{B^c}( \phi_X(s))=\phi_{Y}(s c ^H)$ for all Re$(s)>0$ and $c >1$. Composing the exponents and using \eqref{eq:NBlaplace} 
\begin{align}
\phi_{B^c}(\phi_X(s))&=- \kappa\log\left( \frac{ c^{- H } e^{- \phi_X(s)/\delta}  }{1 -(1-  c^{-H })e^{ -\phi_X(s)/\delta}} \right )=- \kappa\log\left( \frac{ c ^{-H }  (1+ \lambda s ^\gamma )^{-1}  }{1 -(1- c^{-H })(1+ \lambda s^ \gamma  )^{-1}}\right) \nonumber \\ &= \kappa \log\left( 1 - c^{H}  + c^{ H }  (1+  \lambda s  ^{\gamma} ) \right)= \kappa \log\left( 1 +  c^H  \lambda   s
^{\gamma}  \right) 
\end{align}    
Stochastic self-similarity follows setting $\kappa=\delta$.
 \end{proof}
% The above generalizes Korolev et al.2020. 

Using the subordinated structure of the TPL L\'evy process, the distributional invariance part of the proposition above can be extended to $X_+$, although stochastic self-similarity does not hold because PL and TPL processes scale differently.

\begin{cor}\label{TPLinv}
Let $X$ be a {\upshape TPL}$(\gamma, \lambda, \delta, \theta)$ L\'evy process and  $B^{\pi}=\{(B^{\pi}_t)_{t\geq 0}, \pi \in (0,1)\}$ be a family of  {\upshape NB}$( \pi  , \kappa, 1/\delta, \kappa/\delta )$ subordinators independent of $X$. % as in Proposition \ref{sssTPL}. We have that 
Then $X^\pi=X_{B^\pi}$
is a {\upshape TPL}$(\gamma, \lambda \pi^{-1}, \kappa, \theta)$   L\'evy process for all $\pi \in (0,1)$.   
\end{cor}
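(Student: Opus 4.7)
The plan is to leverage the gamma-subordinated representation of the TPL process from Proposition \ref{LevyTPL} together with Proposition \ref{sssTPL} applied to the inner gamma subordinator, which is itself a PL process. The key observation is that a $G(1,\delta)$ process coincides with a PL$(1,1,\delta)$ process, so the negative binomial invariance established for PL processes can be imported through the inner layer of the representation $X = Y_Z$.

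Concretely, I would proceed in three short steps. First, write $X =^d Y_Z$ where $Y$ is a TPS$(\gamma,\lambda,\theta)$ process and $Z$ is an independent $G(1,\delta) = $ PL$(1,1,\delta)$ process; by composition of Laplace exponents, subordinating $X$ to $B^\pi$ (independent of both $Y$ and $Z$) gives the same law as $Y_{Z_{B^\pi}}$. Second, specialize Proposition \ref{sssTPL} to $Z$: setting $\gamma=1$, $\lambda=1$, $\delta$ as given, and $c^{-H} = \pi$, the NB family required by the proposition has parameters $(\pi,\kappa,1/\delta,\kappa/\delta)$, which coincide exactly with those of $B^\pi$. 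Hence $Z_{B^\pi}$ is a PL$(1,\pi^{-1},\kappa)$ process, i.e. a $G(\pi^{-1},\kappa)$ subordinator. Third, the characteristic exponent of $Y_{Z_{B^\pi}}$ is $\phi_{Z_{B^\pi}} \circ \phi_Y$; using \eqref{eq:gammaCE} and \eqref{eq:TPSce} this simplifies to $\kappa \log\bigl(1+\text{sgn}(\gamma)(\lambda/\pi)((\theta+s)^\gamma-\theta^\gamma)\bigr)$, which by \eqref{eq:TPLdef} is precisely the characteristic exponent of a TPL$(\gamma,\lambda/\pi,\kappa,\theta)$ law.

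There is no substantive obstacle: the argument is essentially bookkeeping in composition of Bernstein functions, made transparent by the fact that the parameters of $B^\pi$ have been engineered so as to intertwine neatly with the gamma layer of $X$. A fully direct alternative is to compute $\phi_{B^\pi}(\phi_X(s))$ in one stroke from \eqref{eq:NBlaplace} and \eqref{eq:TPLdef}: setting $\Lambda = \text{sgn}(\gamma)\lambda((\theta+s)^\gamma - \theta^\gamma)$, the cancellation $-\kappa\log\pi + \kappa \log\bigl(1-(1-\pi)/(1+\Lambda)\bigr) + \kappa \log(1+\Lambda) = \kappa \log(1+\Lambda/\pi)$ yields the target exponent immediately; this shortcut however bypasses the conceptual point that the corollary is simply the PL invariance of Proposition \ref{sssTPL} read through the TPS/gamma decomposition of $X$.
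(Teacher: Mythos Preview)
Your proposal is correct and follows essentially the same route as the paper: decompose $X$ as a TPS process subordinated to a gamma process, apply Proposition~\ref{sssTPL} to the gamma layer, and recompose. The only cosmetic difference is that the paper uses the alternative parametrization of Remark~\ref{equivrep} (so $Y$ is TPS$(\gamma,1,\theta)$ and $Z$ is $G(\lambda,\delta)$), whereas you use the standard one with $Y$ being TPS$(\gamma,\lambda,\theta)$ and $Z$ being $G(1,\delta)$; the invocation of Proposition~\ref{sssTPL} and the final identification are otherwise identical.
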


\begin{proof}
Using Remark \ref{equivrep} we recall $X=Y_Z$ where $Y$ is a TPS${(\gamma},1, \theta)$ subordinator and $Z$ is a gamma $G(\lambda, \delta)$ process. 
Applying Proposition \ref{sssTPL} with $\gamma=H=1$, $c=\pi^{-1}$ we have that $Z^{\pi}:=Z_{B^{\pi}}$ is a  $G(\lambda \pi^{-1}, \kappa )$ gamma process.
Using independence we have the equalities 
\begin{equation}
X^\pi_t=^d (Y_Z)_{B^\pi_t}=^dY_{Z^{\pi }_t }
\end{equation}
and the conclusion follows using  again Remark \ref{equivrep} .
\end{proof}

 Unit scale negative binomial subordinators  provide an additional representation for non-degenerate ($\theta>0$) TPL L\'evy processes $X_+$ and $X_-$ as subordinated gamma processes seemingly unrelated to the results above.

\begin{prop} For $\gamma \in  (-\infty ,0) \cup (0,1)$, $\theta>0$, let $Z$ be a  gamma $G(\theta, |\gamma|)$ L\'evy process % subordinator and $X^\pm$ be the two determinations of a {\upshape TPL}$(\gamma, \lambda , \kappa, \theta)$ process. 
and $ B^{\pi}_+$ and $B_-^{\pi}$ to be two negative binomial processes independent of $Z$ respectively of unit marginals {\upshape NB}$(\pi, \delta, 1, \delta)$ and {\upshape NB}$(\pi, \delta, 1, 0)$. Then  if $\gamma \in \mathbb (0,1)$ 
\begin{equation}\label{eq:xplusNB}
X_+^{\pi}:=Z_{ B_+^{\pi}}
\end{equation}
is a {\upshape TPL} $(\gamma, \theta^\gamma \pi^{-1},\delta, \theta^{-1})$ L\'evy process. If $\gamma <0$ 
\begin{equation}
X_-^{\pi}:=Z_{B_-^{\pi}}
\end{equation}
is  a {\upshape TPL}$(\gamma,   \theta^{-\gamma}(\pi^{-1}-1) , \delta, \theta^{-1} )$ L\'evy process.
\end{prop}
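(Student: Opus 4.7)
The cleanest route is to prove both statements by directly comparing Laplace exponents, since subordination of Lévy processes corresponds to composition of characteristic (Laplace) exponents, i.e.\ if $X = Z_B$ with $B$ and $Z$ independent Lévy subordinators, then $\phi_X = \phi_B \circ \phi_Z$. From \eqref{eq:gammaCE} the gamma $G(\theta,|\gamma|)$ process has exponent $\phi_Z(s) = |\gamma|\log(1+\theta s)$, and from \eqref{eq:NBlaplace} the generic negative binomial subordinator has exponent
\begin{equation*}
\phi_B(s) = -\kappa\log \pi + \kappa \log\bigl(1-(1-\pi)e^{-\alpha s}\bigr) + \mu s .
\end{equation*}
Specialising to $(\kappa,\alpha,\mu) = (\delta,1,\delta)$ for $B_+^\pi$ and $(\delta,1,0)$ for $B_-^\pi$ and plugging $\phi_Z(s)$ into the $e^{-\alpha\,\cdot\,}$ slot gives explicit formulas for $\phi_{X_\pm^\pi}$.

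For the $\gamma\in(0,1)$ case the key manipulation is to absorb the drift term $\delta\cdot\phi_Z(s)=\delta\gamma\log(1+\theta s)$ of $B_+^\pi$ into the logarithm, obtaining
\begin{equation*}
\phi_{X_+^\pi}(s) = \delta \log\!\left(\frac{(1+\theta s)^{\gamma}-(1-\pi)}{\pi}\right) = \delta\log\!\left(1+\tfrac{1}{\pi}\bigl((1+\theta s)^{\gamma}-1\bigr)\right).
\end{equation*}
Matching this against the TPL exponent $\delta\log\bigl(1+\lambda((\theta^{-1}+s)^{\gamma}-\theta^{-\gamma})\bigr)$ requires the single identity
\begin{equation*}
(\theta^{-1}+s)^{\gamma} = \theta^{-\gamma}(1+\theta s)^{\gamma},
\end{equation*}
which yields $\lambda((\theta^{-1}+s)^{\gamma}-\theta^{-\gamma}) = \lambda\theta^{-\gamma}\bigl((1+\theta s)^{\gamma}-1\bigr)$, so the identification $\lambda=\theta^{\gamma}\pi^{-1}$ is forced and the claim follows.

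For $\gamma<0$ the absence of drift in $B_-^\pi$ leaves
\begin{equation*}
\phi_{X_-^\pi}(s) = -\delta\log\pi + \delta\log\!\bigl(1-(1-\pi)(1+\theta s)^{-|\gamma|}\bigr) = \delta\log\!\left(1+(\pi^{-1}-1)\bigl(1-(1+\theta s)^{-|\gamma|}\bigr)\right),
\end{equation*}
and one matches this against $\delta\log\bigl(1-\lambda((\theta^{-1}+s)^{\gamma}-\theta^{-\gamma})\bigr)$ using the same power identity, now with exponent $-|\gamma|$, to pin down $\lambda$. Conceptually, this explains the dichotomy of the statement: $B_+^\pi$ carries an exact drift that cancels the shape exponent $\delta\gamma$ of the gamma, producing the infinite-activity ($\gamma>0$) regime, whereas the driftless $B_-^\pi$ delivers a compound Poisson structure consistent with the finite-activity ($\gamma<0$) TPL.

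The only genuinely delicate step is bookkeeping of the sign of $\gamma$ in $\text{sgn}(\gamma)$ and the exponent $\theta^{-\gamma}=\theta^{|\gamma|}$ inside the TPL Laplace transform \eqref{eq:TPLdef}; once the reduction to $(1+\theta s)^{\pm|\gamma|}$ is carried out consistently, the parameter identification is routine and no measure-theoretic or process-level arguments are needed beyond the standard composition rule for subordinators.
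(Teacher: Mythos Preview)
Your proof is correct and follows essentially the same approach as the paper: both arguments compose the Laplace exponent of the negative binomial subordinator with that of the gamma process, simplify using $e^{-\phi_Z(s)}=(1+\theta s)^{-|\gamma|}$, and then invoke the factorisation $(1+\theta s)^{\gamma}=\theta^{\gamma}(\theta^{-1}+s)^{\gamma}$ to match the TPL form. Your presentation is slightly more systematic (writing the generic NB exponent first and then specialising), and the closing remark on why the drift in $B_+^\pi$ versus its absence in $B_-^\pi$ produces the infinite/finite-activity dichotomy is a nice addition not made explicit in the paper.
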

\begin{proof}
 For $X_+^{\pi}$ we have
\begin{align}
\phi_{ B_+^{\pi}}(\phi_Z(s))&=-\delta \log\left( \frac{\pi e^{-\phi_Z(s)}  }{1 -(1-\pi)e^{-\phi_Z(s)}}\right)=-\delta \log\left( \frac{\pi (1+  \theta s )^{-\gamma} }{1 -(1-\pi)(1+  \theta s  )^{-\gamma}}\right) \nonumber \\& =\delta \log\left( 1 - \frac{1}{\pi} +\frac{1}{\pi}(1+  \theta s  )^{\gamma} \right)=\delta \log\left( 1 + \frac{\theta^{\gamma}}{\pi}(  ( \theta^{-1}+ s  )^{\gamma}- \theta^{-\gamma}) \right) 
\end{align}  
whereas for $X_-^{\pi}$
\begin{align}\phi_{ B_-^{\pi}}(\phi_Z(s))&=-\delta\log\left( \frac{\pi}{1 -(1-\pi)e^{-\phi_Z(s)})}\right) \nonumber =\delta \log\left(1 +\frac{1-\pi}{\pi}\left(1-(1+ \theta s)^{\gamma} \right) \right)\nonumber \\&  =\delta \log\left(1 -\theta^{-\gamma}\frac{1-\pi}{\pi}\left((\theta^{-1} +s)^{\gamma} -\theta^{-\gamma} \right) \right).
\end{align}

\end{proof}

%\begin{align}
%\phi_X(s)=& \delta \log\left(1-\lambda((\theta+s)^\gamma-\theta^\gamma) \right)=\delta \log\left(1+\lambda \theta((1- e^{-\phi_S(s)}) \right) \nonumber \\ =&\delta \log\left(1 +\frac{1-\pi}{\pi}\left(1- e^{-\phi_S(s)}\right) \right)=-\delta\log\left( \frac{\pi}{1 -(1-\pi)e^{-\phi_S(s)})}\right)=\phi_{NB^0}(\phi_S(s))
%\end{align}

%Interestingly the role of the Gamma process of infinite activity and that of the CPP process are swapped with respect to the standard represenatation. The subordinator is here finite activity, a negative binomial CPP, and the parent process is an infinite activity Gamma process.

%\begin{align}
%\phi_{NB}(\phi(s))=&-\kappa \log\left( \frac{\pi e^{-%\phi_S(s)}  }{1 -(1-\pi)e^{-\phi_S(s)}}\right)=-\kappa \log%\left( \frac{\pi (1+ s \theta  )^{-\gamma} }{1 -(1-\pi)(1+ s %\theta  )^{-\gamma}}\right) \nonumber \\ =&\kappa \log\left( %1 - \frac{1}{\pi} +\frac{1}{\pi}(1+ s \theta  )^{\gamma} %\right)=\kappa \log\left( 1 + \frac{\theta^{\gamma}}{\pi}(  %( \theta^{-1}+ s  )^{\gamma}- \theta^{-\gamma}) \right) 
%\end{align}  
%Both $\gamma<0$ and $\gamma \in (0,1)$ regimes admit a %subordinated Gamma representation using the negative %binomial process.

Notice that the negative binomial representation of $X_-$ is obtained applying a driftless CPP to the infinite activity  process $Z$ which correctly determines finite activity. In contrast, the one for $X_+$ features a CPP with drift which maintains the infinite activity of the subordinand process $Y_+$.

\medskip

%We finally observe that stochastic self-similarity, which is very consequential for the TPL subordinators, is intimately related to geometric infinite divisibilty, and more precisely, to the geometric stability property \eqref{eq:geomstable}. Let $X$ be a g.i.d variable and $\psi_X(x)$ and $\psi_Z(z)$ in \eqref{eq:gid}. As remarked by Lin 1994 conditioning under independence g.i.d. is equivlent that these functions satisfy
%\begin{equation}\label{eq:geomss}
%\psi_X(z)=\frac{p \psi_Z(z)}{1-(1-p)\psi_Z(z)}=\psi_{NB}( %\psi_Z(z))
%\end{equation}
%where $\psi_{NB}$ is the characteristic function of a unit time marginal $NB(p, 1, 1)$ negative binomial subordinator with drift one, which has  geometric unit time law. In particular if we enforce in \eqref{eq:geomss} the geometric stability condition \eqref{eq:geomstable} $\psi_X(z)=\psi_Z(c^{-H} z)$ Lin 1994 shows that the only solution in $\psi_Z$ for all $c>1$ is the c.f. of the PL$(\gamma, \lambda,1)$. In any case \emph{any} g.i.d distribution retains a negative-binomial representation of the form \eqref{eq:geomss} which is critical to the subordination representations \eqref{eq:xplusPI} and \eqref{eq:xplusNB}.

%Notice that if $H=1$ this is satisfied when $\phi$ is the characteristic exponent of the (unit law of) a stochastically self-similar  process with respect to the subordinator $T^c$ of Proposition \ref{sssTPL} 

%Torricelli 2020 shows that is possible to characterize stochastically self similar processes as those whose unit time marginals are g.i.d. with the geometric property.

\subsection{A connection with potential theory}

There exists an interesting connection between gamma-subordinated L\'evy processes and the potential measure. Following \cite{sat:99}, Chapter 6, define for any Borel set $B \subset \mathbb R$ the $q$-th potential measure of a process $X=(X_t)_{t \geq 0}$ with probability laws $\mu^X_t$ as 
\begin{equation}\label{eq:qpot}
V^q(B)=\int_0^\infty e^{-q u}\mu^X_u(B) du . 
\end{equation}
Now, by \eqref{eq:Bochner} for $q>0$ the laws $\mu^{Y,q}_t$ of $Y^q:=X_{Z^q}$, where $Z^q$ is a $G(1/q,1)$ gamma process independent of $X$, write as
\begin{equation}
\mu^{Y,q}_t(B)=\frac{q^t}{\Gamma(t)}\int_0^\infty \mu^X_u(B)u^{t-1}e^{-q u} du.
\end{equation}
Clearly the law of $Y^q_1$ coincides with $q V^q $, $q>0$ . Therefore,  the knowledge of the unit time law of $Y^q$ completely determines the $q$-th potential measure of $X$. 
But TPL processes are a particular case of gamma-subordinated L\'evy process whose probability laws are known. According to the above, this means that the whole $q$-potential structure, $q>0$, of a TPS$(\gamma, \lambda, \theta)$ law can be made explicit. A simple computation shows the following: %and \eqref{eq:pdf-}. Using the parameter specification needed produces the following result:

\begin{prop}
Let $Y$ be a {\upshape TPS}$(\gamma, \lambda, \theta)$ subordinator. The $q>0$ potential measures of $Y$ are absolutely continuous, and the potential densities $v^q(x)$ are given by
\begin{equation}
v^q(x)=e^{-\theta x} x^{\gamma-1} E_{\gamma, \gamma}\left((\theta^\gamma-q) x^\gamma \right). \end{equation}
%whereas if $\gamma<0$ the potential measures have the expression
%\begin{align}\label{eq:CPPpotential}
%V^q(B)= e^{- \lambda  \theta^\gamma}\delta_0(B)+\sum_{k=1}^\infty  \frac{e^{-   \lambda \theta^\gamma }}{k!}  ( \lambda   \theta^\gamma  ) ^k \int_B  x^{-\gamma k-1 } \frac{e^{- x  \theta} } {\Gamma(-\gamma k)} dx
%\end{align}
%for all Borel sest $B \in \mathcal B(\mathbb R^+)$. 
\end{prop}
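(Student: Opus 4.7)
The plan is to verify the claim via Laplace transforms, building directly on the identity established in the paragraph preceding the statement.

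Specializing $\mu^{Y,q}_t = q V^q$ at $t=1$ shows that $q V^q$ is the law of $Y_{Z^q_1}$, where $Z^q_1$ is exponential of rate $q$. First, I would compute its Laplace transform by conditioning on $Z^q_1$ and using the fact that $Y$ has Laplace exponent $\phi_Y(s) = \mathrm{sgn}(\gamma)\lambda((\theta+s)^\gamma - \theta^\gamma)$ from \eqref{eq:TPSce}:
\begin{equation*}
q \int_0^\infty e^{-sx}\, V^q(dx) = \E\bigl[e^{-\phi_Y(s)Z^q_1}\bigr] = \frac{q}{q + \phi_Y(s)},
\end{equation*}
so that $V^q$ admits Laplace transform $1/(q + \phi_Y(s))$ on a suitable right half-plane of $s$, and in particular $V^q$ is determined by this transform.

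Second, I would match this against the Laplace transform of the candidate density $v^q$. The factor $e^{-\theta x}$ shifts the Laplace variable by $\theta$, so it suffices to evaluate $\int_0^\infty e^{-ux} x^{\gamma-1}E_{\gamma,\gamma}(cx^\gamma)\,dx$ and set $u = s+\theta$, $c = \theta^\gamma-q$. Applying \eqref{eq:MLfunctionFW} with $a=b=\alpha=\gamma$ and $\beta=\gamma$, the Fox--Wright $_2\Psi_1$ collapses: every ratio $\Gamma(\gamma k + \gamma)/\Gamma(\gamma k + \gamma)$ is $1$, leaving a pure geometric series $\sum_{k\geq 0}(c/u^\gamma)^k$ that sums to $1/(1 - c/u^\gamma)$. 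Hence
\begin{equation*}
\int_0^\infty e^{-sx}\, e^{-\theta x} x^{\gamma-1} E_{\gamma,\gamma}\bigl((\theta^\gamma-q) x^\gamma\bigr)\, dx = \frac{1}{(s+\theta)^\gamma - \theta^\gamma + q},
\end{equation*}
valid whenever $|\theta^\gamma-q|<(s+\theta)^\gamma$, and this matches the Laplace transform of $V^q$ (up to the scaling in $\lambda$, which is absorbed by rescaling $c$ and inserting a $1/\lambda$ prefactor, analogously to how $\lambda$ enters \eqref{eq:TPSce}).

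Absolute continuity is then immediate from the explicit density expression, and uniqueness of Laplace inversion on the half-plane of agreement finishes the proof. The main technical point is just the geometric-series collapse of the Fox--Wright function, and keeping track of the sign of $\gamma$: both regimes yield the same functional form since $\mathrm{sgn}(\gamma)$ has already been absorbed into $\phi_Y$, so no separate treatment of $\gamma<0$ is required beyond noting that the series converges in the same right half-plane. I do not anticipate any genuine obstacle beyond this bookkeeping.
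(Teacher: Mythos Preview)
Your proposal is correct and follows essentially the same route as the paper. Both rely on the identity $qV^q = \text{law of } Y_{Z^q_1}$ set up in the preceding paragraph; the paper's ``simple computation'' would simply recognize this law as $\text{TPL}(\gamma,\lambda/q,1,\theta)$ and read off the density from \eqref{eq:pdf+}, whereas you re-derive the Laplace inversion directly via the geometric-series collapse --- which is exactly the computation underlying \eqref{eq:pdf+} in the case $\delta=1$. Your observation about the missing $\lambda$ is also on point: the stated formula is the $\lambda=1$ case, and the general density carries the prefactor $1/\lambda$ with $q$ replaced by $q/\lambda$ inside the Mittag-Leffler argument, as you indicate.

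One small correction: your closing remark that ``no separate treatment of $\gamma<0$ is required'' is off. The formula in the proposition uses $E_{\gamma,\gamma}$, which is only defined for $\mathrm{Re}(\gamma)>0$, and your geometric series $\sum_k (c/u^\gamma)^k$ diverges for large $u$ when $\gamma<0$. The proposition is implicitly for $\gamma\in(0,1]$ (consistent with \eqref{eq:pdf+}), so there is no $\gamma<0$ case to address at all.
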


%The measures $V^q$ appear in connection with the Wiener-Hopf factorization %, which in the case of a subordinator is trivial and amounts to a rootfinding problem of the characteristic exponent (see e.g. Boyachenko and Levandorskii 2002). 
The  $0$-th potential measure (the potential measure \emph{tout court}) of tempered stable subordinators has been calculated using contour integration methods by \cite{kum:20}.

 %30.8 30.7 make clear that the TPL is the pdf of the q-the potential measure times $q$. This generalizes Kumar and Verma 2020 I think. Rosinky, spectral measure.

%Wiener Hopf facorization?

\section{Inhomogeneous TPL processes}\label{sec:inhom}

We discuss  two non-homogeneous (non-L\'evy) Markovian TPL processes: the L\'evy-driven OU process with TPL stationary distribution and the self-similar process with independent 
increments (Sato process) with unit time TPL marginal. The existence of these processes essentially stem from the self-decomposability property of the TPL distribution whenever $\gamma \in (0,1]$.

\subsection{The OU process with stationary TPL distribution}

A L\'evy-driven OU process is the solution $X=(X_t)_{t \geq 0}$ on $(\Omega, \mathcal F, \mathcal F_t, P)$
of the stochastic differential equation (SDE) 

\begin{equation}\label{eq:OUSDE}
 X_t=X_0 - \alpha \int_0^t X_u du  + \int_0^t d Z_u^\alpha   
\end{equation}
given by
\begin{equation}
 X_t= e^{-\alpha t}X_0 +   \int_0^t e^{-\alpha(t-u)} d Z_u^\alpha   
\end{equation}
for some adapted L\'evy process $Z^\alpha=(Z^\alpha_t)_{t\geq0}$, $\alpha>0$. The theory of OU L\'evy-driven SDEs and their applications is fully detailed in \cite{bn:97},  \cite{bn+al:02} and  \cite{bn+she:02}  using prior results of \cite{jur+ver:83} and \cite{wol:82}. %Of particular interest (e.g. in finance- Barndorff Nielsen and Shepperd 2001a ,b ,c) is the case when $Z_t^\alpha$ is a subordinator, in which case $X_t$ is spectrally positive.

\smallskip

 The law of $X$ is clearly determined by that of $Z^\alpha$. Conversely, under some conditions, for any self-decomposable distribution $D$ there exists a L\'evy process $Z^\alpha$ of known L\'evy triplet, such that  \eqref{eq:OUSDE} admits a stationary solution $X$ with law $D$.  %When $D$ is positive,  $Z^\alpha$ is necessarily a driftless CPP  (see \citealt{bn+she:02}).
 
\medskip 
 
%Another distribution of interest that can be introduced from this class of transform is the

In order to study the process $Z^\alpha$ determining a TPL stationary solution $X$ to \eqref{eq:OUSDE}, we begin by introducing the tempered Mittag-Leffler (TML) distribution distribution. %family TML$(a, c, \theta)$. 
A TML distribution is obtained by exponentially tempering with $\theta >0$ the survival function of \cite{pil:90} Mittag-Leffler distribution. We illustrate such a family in the following result.%

% and  using G\"osta 2020 Section 5, we have the p.d.f. 

\begin{prop}
A TML r.v. $U$ with  {\upshape TML}$(a, c, \theta)$,  distribution where $( a, c, \theta) \in (0,1 ] \times \mathbb R \times \mathbb R_+$, is a positive distribution defined by  the cumulative distribution function (c.d.f.) 
\begin{equation}\label{eq:LMLcdf}
F_U(x; a, c, \theta)=\left(1-e^{-\theta x}  E_{a}(- c x ^a)\right)\I_{\{x \geq 0\}},
\end{equation} with  p.d.f.
\begin{align}\label{eq:LMLpdf}
f_U(x; a, c,  \theta)&
= e^{-\theta x} (\theta E_{a}(- c x ^a)+ c \, x^{a-1}E_{a,a}(- c \, x ^a) )\I_{\{x \geq 0\}}  
\end{align}
and Laplace transform
\begin{equation}\label{eq:TML}
L_U(s) = \frac{\theta(s+\theta)^{a-1}+c}{(s+\theta)^{a}+c}, \qquad \text{{\upshape Re}}(s)>0.
\end{equation}
Furthermore, $U$ is infinitely-divisible.
\end{prop}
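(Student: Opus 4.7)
I would verify the c.d.f. claim by a probabilistic interpretation: since $e^{-\theta x}$ is the survival function of $T\sim\mathrm{Exp}(\theta)$ and $E_a(-cx^a)$ is the survival function of the classical Mittag-Leffler law $V\sim\mathrm{ML}(a,c)$ (for $a\in(0,1]$, $c>0$, in the sense of \cite{pil:90}), the expression $\bar F_U(x)=e^{-\theta x}E_a(-cx^a)$ is the survival function of $U=\min(V,T)$ with $V\perp T$, which proves monotonicity, right-continuity and the endpoint values. The p.d.f. then follows by direct differentiation: using the elementary series identity $\tfrac{d}{dx}E_a(-cx^a)=-cx^{a-1}E_{a,a}(-cx^a)$ (cf.\ \cite{hau+al:11}), one obtains $f_U$ as the sum $\theta e^{-\theta x}E_a(-cx^a)+ce^{-\theta x}x^{a-1}E_{a,a}(-cx^a)$, which one also recognises as the density $f_T(x)\bar F_V(x)+f_V(x)\bar F_T(x)$ of $\min(V,T)$.

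For the Laplace transform, I would integrate term by term the series for $E_a$ and $E_{a,a}$ to recover the two standard identities
\begin{equation*}
\int_0^\infty e^{-ux}E_a(-cx^a)\,dx=\frac{u^{a-1}}{u^a+c},\qquad
\int_0^\infty e^{-ux}x^{a-1}E_{a,a}(-cx^a)\,dx=\frac{1}{u^a+c},
\end{equation*}
valid when $|c|<u^a$. Setting $u=s+\theta$ and combining the two pieces with weights $\theta$ and $c$ from the expression of $f_U$ yields \eqref{eq:TML}. An equally efficient route is the Laplace--Stieltjes identity $L_U(s)=1-s\int_0^\infty e^{-sx}\bar F_U(x)\,dx$, which requires only the first of the two integrals.

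For infinite divisibility, I would mimic the strategy of Proposition \ref{selfecomp}: by the Klebanov characterisation cited there, it suffices to show that $1/L_U(s)-1$ is a Bernstein function of $s$, since this already yields geometric infinite-divisibility and a fortiori infinite-divisibility. A short algebraic simplification gives
\begin{equation*}
\frac{1}{L_U(s)}-1=\frac{s(s+\theta)^{a-1}}{\theta(s+\theta)^{a-1}+c}=\frac{s}{\theta+c(s+\theta)^{1-a}}.
\end{equation*}
The shifted fractional power $s\mapsto(s+\theta)^{1-a}$ is a complete Bernstein function for $1-a\in[0,1)$; addition of the positive constant $\theta$ and the positive multiplier $c$ preserves complete Bernstein character, and the standard involution property ``$f$ nonzero CBF $\Rightarrow s/f(s)$ CBF'' (e.g.\ \citealt{sch+von:12}, Prop.~7.1) then certifies that $1/L_U-1$ is itself CBF, hence Bernstein.

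The main obstacle is step three: identifying the right criterion to invoke so that the proof remains short and fits naturally with Propositions \ref{LevyTPL}--\ref{selfecomp}. An alternative I would have in reserve, should the CBF argument prove awkward for certain parameter regimes (in particular $c\le 0$), is to verify that $f_U$ is completely monotone on $(0,\infty)$ — using Pollard's theorem that $y\mapsto E_a(-y)$ is CM together with the CM character of the Mittag-Leffler density $x^{a-1}E_{a,a}(-cx^a)$, and the fact that CM is preserved by multiplication by $e^{-\theta x}$ and by addition — and then apply Goldie--Steutel to conclude infinite-divisibility via the mixture-of-exponentials representation.
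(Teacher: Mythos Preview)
Your treatment of the c.d.f., p.d.f.\ and Laplace transform is essentially the paper's: differentiate the survival function term by term and then invoke the standard transforms of $E_a$ and $E_{a,a}$ (which the paper packages via \eqref{eq:MLfunctiontiltedLaplace}). Your $\min(V,T)$ reading is a pleasant extra the paper does not record, though it is only available for $c>0$.

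The infinite-divisibility argument, however, is genuinely different. The paper works directly with the criterion that $-\tfrac{d}{ds}\log L_U(s)$ be completely monotone, computing
\[
-\frac{d}{ds}\log L_U(s)=(s+\theta)^{a-1}\left(\frac{\theta(1-a)}{c(s+\theta)+\theta(s+\theta)^a}+\frac{a}{c+(s+\theta)^a}\right)
\]
and arguing that this is a product of positive combinations of CM functions. You instead simplify $1/L_U(s)-1=s/(\theta+c(s+\theta)^{1-a})$ and apply the CBF involution $f\mapsto s/f(s)$, obtaining geometric infinite divisibility and hence infinite divisibility. Your route is algebraically tidier, yields the strictly stronger g.i.d.\ conclusion, and resonates with the paper's own emphasis on g.i.d.\ in Section~\ref{sec:sdgid}; the paper's route is the more classical direct check and avoids importing the Klebanov criterion. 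Both arguments, as written, are clean only for $c>0$: the paper's ``positive linear combination'' claim and your ``positive multiplier $c$'' both silently use this sign, so the caveat you flag about $c\le 0$ applies equally to the original proof.
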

\begin{proof}
Using the properties of the Mittag-Leffler function, that $F_U$ is a positively-supported c.d.f. is clear. By differentiating  in $x$ we have 
%\begin{align}\label{eq:wLM}
%w(x)= & \alpha \gamma \delta e^{-\theta x}\left(  \theta E_{\gamma,1}\left(c_{\lambda, \theta, \gamma} x^\gamma \right)-\frac{d}{dx}E_{\gamma,1}\left(c_{\lambda, \theta, \gamma} x^\gamma \right) \right) \nonumber  \\=& \alpha \gamma \delta e^{-\theta x}\left(  \theta E_{\gamma,1}\left(c_{\lambda, \theta, \gamma} x^\gamma \right)-    x^{\gamma-1}\sum_{k=1}^\infty  \gamma k c_{\lambda, \theta, \gamma}^k \frac{x^{\gamma (k-1)}}{\Gamma(\gamma k+ 1)}  \right)  \nonumber \\=& \alpha \gamma \delta e^{-\theta x}\left(  \theta E_{\gamma,1}\left(c_{\lambda, \theta, \gamma} x^\gamma \right)-    {x^{\gamma-1} c_{\lambda, \theta, \gamma}} \sum_{h=0}^\infty \gamma(h+1) c_{\lambda, \theta, \gamma}^h \frac{x^{\gamma h}}{\Gamma(\gamma (h+1) +1)}  \right)  \nonumber  \\ &= \alpha \gamma \delta e^{-\theta x}\left(  \theta E_{\gamma,1}\left(c_{\lambda, \theta, \gamma} x^\gamma \right)-    { c_{\lambda, \theta, \gamma}} x^{\gamma-1} E_{\gamma,\gamma	}(c_{\lambda, %\theta, \gamma} x^\gamma)  \right).  
%\end{align}
\begin{align}\label{eq:LMLpdfproof}
f_U(x; a, c,  \theta) &=
 e^{-\theta x}\left(  \theta E_{a}\left(- cx^a \right)- x^{a-1}\sum_{k=0}^\infty  a (k+1)(- c)^{k+1} \frac{x^{a k}}{\Gamma(a k+ a+1)}  \right)  \nonumber \\
&= e^{-\theta x}\left (\theta E_{a}(- c x ^a)+ c \, x^{a-1}\sum_{k=0}^\infty \frac{(- c \, x ^a)^k }{\Gamma(a k +a)}  \right)  
\end{align}
which yields \eqref{eq:LMLpdf}. Using  \eqref{eq:MLfunctiontiltedLaplace} with the appropriate parameters on both terms in \eqref{eq:LMLpdf} we have
\begin{equation}\label{eq:TMLproof}
L_U(s)= \theta \frac{(s+\theta)^{a-1}}{(s+\theta)^{a}+c} + \frac{c}{(s+\theta)^{a}+c} 
\end{equation}
and \eqref{eq:TML} follows. To show that the TML distribution is infinitely-divisible is necessary and sufficient to show that the logarithmic derivative of $-L_U$ is a completely monotone function (e.g. \citealt{gor+al:14}, Chapter 9). But for $s>0$
\begin{align}
-\frac{d}{ds} \log(L_U(s))&=(s+\theta)^{a-1}\left(\frac{\theta(1-a)}{c(s+\theta)+ \theta(s+\theta)^{a}} +\frac{a}{c+ (s+\theta)^{a}} \right) 
\end{align}
which is a product of positive linear combinations of completely monotone functions, and hence is itself completely monotone (\citealt{sch+von:12}, Corollary 1.6).
\end{proof}

\bigskip
The TML distribution dictates the activity of the CPP process $Z^\alpha$ when $X$ is a stationary solution to \eqref{eq:OUSDE}.  The next Proposition closely mirrors Proposition \ref{CPPTPL}.

 % Furthermore the marks have TML distribution introduced in subsection 

%The OU TPL process: links to the $\alpha$-stable OU process (Doob 1942). Also to Barndorff Nielsen-Shepperd 1998: the driving L\'evy process is spectrally positive so well suited for uses in finance as a volatility process.

%There exist a simple analytic criterion for a stationary solution to an SDE driven by a L\'evy process to exist (Barndorff Nielsen 1997. Thms. 2.2 (Wolfe 1982), 2.3, 4.1). 

%In our  case, when $\gamma \in (0,1)$ a solution to the problem is a TPL distribution, with driving L\'evy noise $Z^\lambda_t$ having known L\'evy triplet given by $\lambda$ the decomposition functional of the L\'evy measure (used in Section 1 to prove TPL self decomposability).

\begin{prop}\label{OUTPL}
Let $X$ have {\upshape TPL}$(\gamma, \lambda, \delta, \theta)$ with  $\gamma \in (0,1]$, and $\lambda \theta^\gamma<1$. Then $X$ is the law of the stationary solution to \eqref{eq:OUSDE} with \begin{equation}\label{eq:levydrivCPP}
Z^{\alpha}_t=\sum_{n = 0}^{N_t^\alpha}U_n
\end{equation} where $N^\alpha=(N_t^\alpha)_{t\geq 0}$ is a Poisson process of intensity $\alpha  \delta \gamma$ while $(U_n)_{n \geq 0}$ is an i.i.d. sequence of r.v.s independent of $N^\alpha$ with common distribution $\text{{\upshape TML}}(\gamma, -\c, \theta)$ and $\c$ is given by \eqref{c}.
 Moreover
\begin{equation}\label{eq:Zlcharexp}
\phi_\alpha(s):=\phi_{Z^\alpha}(s)=\alpha \delta  \gamma \frac{\lambda s ( \theta+s)^{\gamma-1}}{1+\lambda((\theta+s)^\gamma - \theta^\gamma) }.
\end{equation}

% L\'evy triplet $(0,0, w(x)dx)$ with
%\begin{align}
 % \label{eq:a} \\ \label{eq:w}   w(x)=& 
%\end{align}

\end{prop}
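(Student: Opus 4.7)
The backbone of the argument is the classical Jurek--Vervaat / Barndorff--Nielsen characterisation: if the target stationary law $X$ is self-decomposable with characteristic exponent $\phi_X$, the background driving L\'evy process (BDLP) satisfies
\[
\phi_{Z^\alpha}(s)=\alpha s\,\phi_X'(s),\qquad \text{Re}(s)>0.
\]
Self-decomposability in the range $\gamma\in(0,1]$ is precisely the content of Proposition \ref{selfecomp}, so the plan is first to differentiate $\phi_X$ directly, then to massage the resulting rational expression into the characteristic exponent of the claimed compound Poisson process with TML jumps.

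\textbf{Step 1.} From \eqref{eq:TPLdef} and the condition $\gamma>0$ we have $\phi_X(s)=\delta\log\bigl(1+\lambda((\theta+s)^\gamma-\theta^\gamma)\bigr)$. A direct differentiation gives
\[
\phi_X'(s)=\frac{\delta\lambda\gamma(\theta+s)^{\gamma-1}}{1+\lambda((\theta+s)^\gamma-\theta^\gamma)},
\]
so multiplying by $\alpha s$ yields \eqref{eq:Zlcharexp} at once.

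\textbf{Step 2.} To match this with a CPP of intensity $\kappa=\alpha\delta\gamma$ and jumps $U\sim\text{TML}(\gamma,-\c,\theta)$, I need to verify $\phi_\alpha(s)=\kappa\bigl(1-L_U(s)\bigr)$, where $L_U$ is given by \eqref{eq:TML}. Substituting $a=\gamma$, $c=-\c$ and recalling (from \eqref{c}) that in the $\gamma>0$ regime $\lambda(-\c)=1-\lambda\theta^\gamma$, I compute
\[
1-L_U(s)=1-\frac{\theta(s+\theta)^{\gamma-1}-\c}{(s+\theta)^\gamma-\c}=\frac{(s+\theta)^\gamma-\theta(s+\theta)^{\gamma-1}}{(s+\theta)^\gamma-\c},
\]
and multiplying numerator and denominator by $\lambda$ and using $\lambda(-\c)=1-\lambda\theta^\gamma$ one arrives at
\[
1-L_U(s)=\frac{\lambda s(\theta+s)^{\gamma-1}}{1+\lambda((\theta+s)^\gamma-\theta^\gamma)}.
\]
Multiplying by $\kappa=\alpha\delta\gamma$ reproduces \eqref{eq:Zlcharexp} exactly, so $\phi_\alpha$ is indeed the characteristic exponent of a CPP with the announced intensity and TML jump distribution.

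\textbf{Step 3.} I should verify the parameter admissibility, which is the only subtle point. The standing assumption $\lambda\theta^\gamma<1$ translates into $-\c>0$, so the TML$(\gamma,-\c,\theta)$ law is well defined as a probability distribution supported on $\mathbb R_+$ (cf.\ the preceding TML proposition), and the resulting $\phi_\alpha$ is the characteristic exponent of a genuine positive CPP. The existence and uniqueness in law of the stationary solution $X$ to \eqref{eq:OUSDE} driven by $Z^\alpha$ then follow from \cite{sat:91} (or \cite{bn:97}), because $\phi_X$ determines $\phi_\alpha$ via the logarithmic moment condition which is automatic as $Z^\alpha$ is a finite-activity process with integrable jumps. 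The only computational friction is the algebraic reduction in Step 2, which I expect to be the main, although still routine, obstacle.
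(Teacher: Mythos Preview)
Your argument is correct and mirrors the paper's own proof almost exactly: the paper also invokes self-decomposability from Proposition \ref{selfecomp}, applies the Barndorff--Nielsen identity $\phi_\alpha(s)=\alpha s\,\phi_X'(s)$ to obtain \eqref{eq:Zlcharexp}, and then rewrites $\phi_\alpha$ in the form $\alpha\delta\gamma\bigl(1-L_U(s)\bigr)$ with $L_U$ the TML Laplace transform \eqref{eq:TML}. Your Step~3 on parameter admissibility is a welcome addition that the paper leaves implicit.
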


\begin{proof}
According to Proposition \ref{selfecomp} whenever $\gamma \in \mathbb (0,1]$, the r.v. $X$ is self-decomposable. According to e.g. \cite{bn:97} Theorem 2.2,   it holds $\phi_\alpha(s)=\alpha s \phi'_X(s)$ so long as this latter expressions is continuous in zero. Such calculation produces \eqref{eq:Zlcharexp} and continuity is easily checked. 
Moreover, it is easy to show that
\begin{equation}\label{eq:Zlcharexp2}
\phi_\alpha(s)=\alpha \delta \gamma \left( 1- \frac{\theta ( \theta+s)^{\gamma-1} - \c}{ (\theta+s)^\gamma - \c} \right)
\end{equation}
and in the second term inside the parentheses we recognize the Laplace transform \eqref{eq:TML} with the required parameters. %of a TML$(\gamma,c_{\lambda,\theta,\gamma},\theta)$ random variable.
 This characterizes the law of $Z^\alpha$ as that of the CPP in \eqref{eq:levydrivCPP}.
\end{proof}

The CPP structure of the L\'evy driving noise is typical for a large class of self-decomposable distributions $D$. It is known (\citealt{ste+vh:03}, Theorem V.6.12) that $u_D(x)$ must be such that $k(x):=x u_D(x)$ is  non-increasing. On the other hand, as observed in e.g. \cite{bn+she:02}, equations (16)--(17), we have
\begin{equation}\label{eq:intuza}
\int_{x}^\infty  u_{Z^\alpha}(u)du= \alpha xu_D(x)=\alpha k(x).
\end{equation}
 Assume now that $k(x)$ is differentiable and finite in zero. From \eqref{eq:intuza} it follows 
\begin{equation}\label{eq:uza}
u_{Z^\alpha}(x)=-\alpha k'(x).\end{equation} and setting  $x=0$ in \eqref{eq:intuza} shows that $Z^\alpha$ is a CPP and the p.d.f. of the increments equals $- k'(x)/\alpha k(0+)$. This argument does provide an alternative proof of Proposition \eqref{OUTPL}: from  \eqref{eq:LM} specifying
\begin{equation}
k(x)=\gamma\delta e^{-\theta x}E_\gamma \left( \c \, x^{\gamma} \right)\I_{\{x>0\}},
\end{equation}
 differentiating and substituting in \eqref{eq:uza} recovers the L\'evy density of the CPP in \eqref{eq:levydrivCPP}. 

\smallskip

In the case $\gamma=1$ or $\lambda \theta^\gamma=1$ we fall back to two instances of the popular gamma OU L\'evy-driven model discussed in \cite{bn+she:02}, \cite{bn+al:01}  with respectively $G(\lambda,\delta)$  and $G(1/\theta,\delta \gamma)$ stationary solution. Accordingly, in such a case the TML increments reduce to exponential variables of parameter  $\lambda$ (resp. $1/\theta$).
Furthermore, we have the notable particular case $\theta=0$ in which the stationary OU solution with PL distribution has Mittag-Leffler  driving noise. The PL stationary OU L\'evy-driven process can be thus seen as the natural modification of a gamma stationary OU process upon introduction of the $\gamma$ tail parameter.

\medskip

The OU representation of a stationary TPL process is very well-suited for numerical schemes of Euler type, where the innovation $U_i$ can be treated by inverse-CDF sampling using equation \eqref{eq:LMLcdf}. We exemplify this in Figure \ref{fig:ouplot} where we simulate both the stationary gamma OU L\'evy-driven model and its TPL counterpart with same random variate drawings. The former is attained from the latter by using same parameters but changing $\gamma$ to 1. In the TPL model $\gamma$ and $\theta$ govern the tail of the TML jumps: the smaller such parameters the biggest the incidence of large upward jumps in the OU process, a feature which is particularly appealing for modeling financial returns volatility.

\begin{figure}
\begin{floatrow}
\includegraphics[height=8cm, width=12cm]{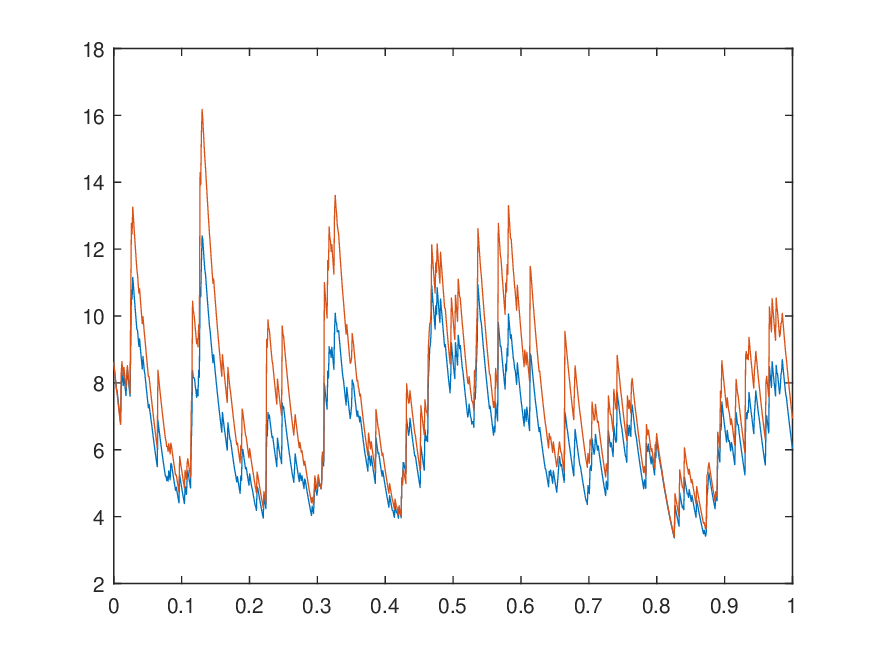}}
{\caption[]{\small{In blue a trajectory  of the OU gamma stationary model and in red a trajectory of its TPL extension from the same random drawing. The parameters are $\alpha=25$, $\delta=20$, $\lambda=\theta=0.5$, $\gamma=0.7$. }} \label{fig:ouplot}
\end{floatrow}
\end{figure}

%The TPL stationary OU model could find applications in finance, in either volatility time series or derivative securities valuation, it being a two-parameters generalization of the gamma OU model. 
Another classic application of L\'evy driven SDEs is the explicit construction of a stationary process  with  (quasi) long-range dependence, which can be attained using a superposition of the SDEs \eqref{eq:OUSDE}  as explained in   \cite{bn:97},  Theorem 4.1 .  %(see also Bergomi for similar approaches in option pricing models). %Following Barndorff Nielsen 1997 Thm. with the appropriate process replacement we see that

%DISCUSSION OF LONG RANGE DEPENDENCE A LA BARNDORFF NIELSEN

%\medskip

%SHORT DISCUSSION OF THE BNS MODEL. DISCUSS THE FORM OF THE CHARACTERISTIC FUNCTION OF THE ASSET PRICE WHICH GIVES OPTION PRICES.

%\medskip

%Observe that this generalizes the $\Gamma$-OU process detailed in Barndorff-Nielsenand Sheppherd. 
%HOW THE EXTRA PARAMETERS COULD BENEFICIAL? SOME PLOTS AND NUMERICAL ANALYSIS

\subsection{Self-similar TPL processes with independent increments}

As shown  by \cite{sat:91},  for all $H>0$ to any self-decomposable distribution $D$ we can associate a self-similar process with Hurst exponent $H$ with independent increments (s.s.i.i.) having $D$ as unit time marginal. Unless $D$ is a stable distribution, such process will not be the same one as the L\'evy process with unit time law $D$. As it turns out, when $D$ is TPL all the marginals of the TPL s.s.i.i. process remain TPL and we thus have an explicit representation for its law and L\'evy measure.

\begin{prop}\label{SatoTPL} Let $H>0$ and $X$ be a {\upshape TPL}$(\gamma, \lambda, \delta, \theta)$ r.v. with $\gamma \in (0,1)$. There exists a stochastically-continuous s.s.i.i. process $X^H=(X_t^H)_{t \geq 0}$  of Hurst index $H$ 
with independent increments such that $X_1^H$ has the same distribution as $X$, and whose triplet of the integrated semimartingale characteristics %	{\upshape (\citealt{jac+shi:03})} 
is 
$(0,0,U^H(dt, dx))$ with  $U^H(dt, dx)$ having density
\begin{equation}\label{eq:INTLM}
u^H_X(t,x)= \gamma \delta \frac{e^{-\theta t^{-H} x}}{x}E_{\gamma}  \left( \c ( t^{-H } x)^\gamma \right). 
\end{equation}
In particular
$X^H_t $ has $ \text{{\upshape TPL}}(\gamma, \delta, \lambda t^{H \gamma}, \theta t^{-H})$ distribution.
Furthermore we have the subordinated representation
\begin{equation}\label{eq:SSTC}
X^H=Y^H_Z
\end{equation}
where $Y^H$ is the s.s.i.i. process associated to a {\upshape TPS}$(\gamma,\lambda, \theta)$ law
which is such that $Y_t^H$ has {\upshape TPS}$(\gamma, \lambda t^{H \gamma}, \theta t^{-H})$ distribution,
and $Z$ is a $G(1, \delta)$ independent gamma L\'evy process.
\end{prop}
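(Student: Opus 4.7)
The plan is to invoke Sato's correspondence between self-decomposable distributions and self-similar processes with independent increments, then transport all properties of $X$ through the scaling identity $X^H_t=^dt^HX$, and finally read off the subordinated representation from Proposition \ref{LevyTPL} applied marginally.

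\medskip

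\noindent\emph{Step 1: existence.} Since $\gamma\in(0,1)\subset(0,1]$, Proposition \ref{selfecomp} yields that $X$ is self-decomposable. Applying \cite{sat:91}, Theorem 1, then produces a unique-in-law stochastically continuous $H$-self-similar additive process $X^H$ with $X^H_1=^dX$.

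\medskip

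\noindent\emph{Step 2: marginals and L\'evy density.} The $H$-self-similarity gives $L_{X^H_t}(s)=L_X(t^Hs)$. Substituting into \eqref{eq:TPLdef} and factoring $t^{H\gamma}$ out of $(\theta+t^Hs)^\gamma-\theta^\gamma$ recognizes the result as a TPL$(\gamma,\lambda t^{H\gamma},\delta,\theta t^{-H})$ Laplace transform. The density $u^H_X(t,x)$ is then obtained by pushing the L\'evy measure $\nu$ of $X$ forward under $x\mapsto t^Hx$, i.e.\ via $x\mapsto t^{-H}u_X(t^{-H}x)$, and inserting the positive-$\gamma$ branch of \eqref{eq:LM} gives exactly \eqref{eq:INTLM}; the absence of Gaussian and drift components is inherited from the triplet $(0,0,\nu)$ supplied by Proposition \ref{LevyTPL}.

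\medskip

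\noindent\emph{Step 3: subordinated representation.} The TPS$(\gamma,\lambda,\theta)$ law is also self-decomposable (e.g.\ because $k(x)=xu_Y(x)$ is monotone from \eqref{eq:TPSce}, or as a weak limit of TPL laws), so Sato's theorem again produces the additive process $Y^H$, whose TPS$(\gamma,\lambda t^{H\gamma},\theta t^{-H})$ marginals follow from the same scaling argument of Step 2. The identity \eqref{eq:SSTC} then reduces to verifying at each $t$ the factorization $\phi_{X^H_t}=\phi_Z\circ\phi_{Y^H_t}$, where $\phi_Z$ is the $G(1,\delta)$ exponent from \eqref{eq:gammaCE} and $\phi_{Y^H_t}$ is the TPS exponent of Step 2; this is exactly the Bochner composition identity of Proposition \ref{LevyTPL} applied marginally in $t$.

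\medskip

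The main obstacle is precisely Step 3: since $Y^H$ is additive but not L\'evy, Bochner's subordination machinery does not apply directly as a process-level statement, and the identity \eqref{eq:SSTC} has to be interpreted via the matching of one-dimensional Laplace exponents at each $t$, with the independence of the gamma clock $Z$ from $Y^H$ supplying the needed conditioning. Lifting this marginal matching to equality of finite-dimensional distributions will require careful use of the independent-increments structure on both sides of \eqref{eq:SSTC}.
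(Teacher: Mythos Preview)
Your Steps 1--2 coincide with the paper's argument: existence via \cite{sat:91} for self-decomposable laws, and the scaling rule $u^H_X(t,x)=t^{-H}u_X(t^{-H}x)$ combined with \eqref{eq:LM} to obtain \eqref{eq:INTLM} and the TPL marginals. For Step~3 the paper does exactly what you propose: it identifies the TPS marginals of $Y^H$ via the same scaling of the L\'evy density and then records the chain
\[
\phi_{X^H_t}(s)=\phi_X(st^H)=\phi_Z(\phi_Y(st^H))=\phi_Z(\phi_{Y^H_t}(s)),
\]
declaring this to conclude the proof. So your plan is faithful to the paper throughout.

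Your closing caveat is well-placed, and the obstruction is in fact sharper than a marginal-to-f.d.d.\ lift. Because $Y^H$ is additive but not L\'evy, one has $\phi_{Y^H_u}(s)=\phi_Y(u^Hs)$ rather than $u\,\phi_Y(s)$; conditioning on the gamma clock therefore gives
\[
\E\!\left[e^{-sY^H_{Z_t}}\right]=\E\!\left[\exp\!\big(-\phi_Y(Z_t^{\,H}s)\big)\right],
\]
which is \emph{not} $\exp\!\big(-\phi_Z(\phi_{Y^H_t}(s))\big)$ in general. A single check suffices: with $\gamma=1/2$, $\theta=0$, $\delta=H=1$ and $t=1$ one gets $\E[e^{-\sqrt{Z_1}\sqrt{s}}]$ on the left versus $(1+\sqrt{s})^{-1}=\E[e^{-Z_1\sqrt{s}}]$ on the right, and these differ. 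Hence the exponent identity that you (and the paper) verify establishes only the static statement that, for each fixed $t$, $X^H_t$ is the $G(1,\delta)$ Bochner mixture of the TPS$(\gamma,\lambda t^{H\gamma},\theta t^{-H})$ law --- i.e.\ Proposition~\ref{LevyTPL} read at the scaled parameters --- and not the process-level time change $X^H_t=^dY^H_{Z_t}$ as the notation of \eqref{eq:SSTC} would suggest. Your instinct that Step~3 needs more is correct; the paper's proof stops at precisely the same point.
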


\begin{proof}
The existence of $X^H$ for a given unit time  self-decomposable marginal $X$, and its characterization in terms of the integrated semimartingale characteristic triplet is provided in \cite{sat:91}. In particular  the integrated L\'evy measure of $X^H$ is absolutely continuous and its density is given by 
\begin{equation}\label{eq:satsub}
 u^H_X(t,x)=t^{-H}u_X(t^{-H} x).
\end{equation}
%where $v(x)$ is the L\'evy density of $X$.
Remembering \eqref{eq:LM} and the density  \eqref{eq:INTLM}   follows. 

To prove the second statement consider the s.s.i.i. process with independent increments $Y_t^H$ and combine the substitution in \eqref{eq:satsub} with the L\'evy density \eqref{eq:TPSce} which determines the integrated L\'evy density $u^H_Y(t,x)$ of $Y_t^H$ as
\begin{equation}
u^H_Y(t,x)= \frac{\gamma\lambda t^{-H}}{\Gamma(1-\gamma)}\frac{e^{-\theta t^{-H} x}}{{(xt^{-H})}^{\gamma+1}} \I_{\{x>0\}} dx=\frac{\gamma\lambda t^{\gamma H}}{\Gamma(1-\gamma)}\frac{e^{-\theta t^{-H} x}}{{x}^{\gamma+1}} \I_{\{x>0\}} dx
\end{equation}
showing  that $Y^H_t$ has  TPS$(\gamma, \lambda t^{H \gamma}, \theta t^{-H})$ law.
Now  using \eqref{eq:satsub} in \eqref{eq:comp} implies%, for any self-decomposable distribution $V$ associated to a s.s. process with independent increments $V^H$ with no quadratic or linear component: 
%\begin{equation}
%\phi_{V^H}(s)=\phi_V(s t^H).
%\end{equation}
%Whence
\begin{equation}
\phi_{X^H}(s)=\phi_X (s t^H)=\phi_Z(\phi_Y(s t^H))=\phi_Z(\phi_{Y^H}(s))
\end{equation}
which terminates the proof. %that $X^H$ is obtained via Bochner subordination of $Y^H$ through an independent Gamma subordinator with law $Z_t$.
% But then letting $f_t^H$ be the p.d.f of $Y_t^H$ and considering the Bochner integral
%\begin{equation}
%\int_{0, \infty} f_t^H(x) \frac{e^{-x}}{x}
%\end{equation}
\end{proof}

%Observe that in the subordinated representation \eqref{eq:standardTC}, according to \eqref{eq:SSTPLmargins} to attain the distribution of $X^H_t$ acts on the parent stable distribution alone. Morevoer, as a consequence of Sato 1991 theory if $X$ is self decomposable, then the self similar process $X_t^H$ with $X_1=X$ is such that
%\begin{equation}
%\E[e^{-s X_t^H }]=\E[e^{-s t^H X }]=\E[e^{- \phi_X(s t^H) }].
%\end{equation}
%Therefore in our case $\phi_X(s)=\phi_Z(\phi_Y(s))$ for appropriate characteristic exponents of self decomposable random variable $X$ and $Y$
%\begin{equation}
%\E[e^{-s X_t^H }]=\E[e^{- \phi_Z(\phi_Y(s t^H)) }]
%\end{equation}
%and $\phi_Y(s t^H)$ is the characteristic exponent of the marginals of the self-similar process with independent increments  $Y_t^H$, which on the other hand can be written as
%\begin{equation}
%\phi_{Y^H}(s)=\phi_Y(s t^H)=\lambda((\theta+s t^H)^{\gamma}-\theta^\gamma )=\lambda t^{H \gamma}(\theta t^{-H}+s )^{\gamma}-(\theta t^{-H})^\gamma ).
%\end{equation}
%and therefore $Y^H_t \sim TPS_\gamma(\lambda t^{H \gamma}, \theta t^{-H})$. In conclusion we have the subordinated representation 
%\begin{equation}
%X^H_t=Y^H_{Z_t}
%\end{equation}
%in law.

\smallskip
% where $(Y^H_t)_{t \geq 0}$ is the self-similar process with independent increments associated to the parent positive tempered $\gamma$-stable process $(Y_t)_{t \geq 0}$

We observe that  as $t \rightarrow \infty$ tempering tends to zero and $X^H_t$  approaches a large scale PL variable. %, although the %time dispersion of the scale is different from that of a PL L\'evy process.

% DISCUSSION OF SELF-SIMILARITY (AND BENEFITS FOR DATA MODELLING?) GOES HERE?

Self-similarity is a property which is often observed in financial returns  time series. Using additive processes in place of L\'evy ones in finance has also benefits for valuation of derivative securities. It is recognized that normalized cumulants of risk-neutral distributions implicit in option prices do not decrease with time to expiration of contracts, or at least not as rapidly as the linear rate of decay predicted by L\'evy process, a behaviour which is corrected by removing the assumption of returns stationarity. %are linear in $t$, therefore the normalized cumulants skewness and kurtosis, tend to 0 as time goes to infinity. When using self similar additive processes typically this is no longer the case. Denoting $\kappa^H_n$ the cumulants of $X^H$, combining \eqref{eq:SSTPLmargins} and \eqref{cumulantsTPL} one sees that $\Skew{X^H_t}=\kappa^H_3/(\kappa^H_2)^{3/2}$ and $\Kurt{X^H_t}=\kappa^H_4/(\kappa^H_2)^{2}$ are and independent of $t$.

%BENEFITS OF ADDITIVE PROCESSES IN FINANCE: TIME EVOLUTION OF  KURTOSIS AND SKEWNESS DOES NOT DECREASE WITH TIME (UNLIKE LEVY)

%\begin{prop}\label{LongRangeTPL}
%We reprove thm 4.3 using the OU process directly for $X^k$, without assuming decay of correlation functions (but still need decay of $\delta_k$). Autocorrelation of OU TPL decays exponentially because the process is square-integrable (it is having finite variance and being stationary). Must check that variances converges to get existence of limiting process. That the limit is still TPL follows from that TPL are closed under convolution.  Prove equation 4.3 of BN Thm 4.3 which should be independent of the distribution.
%\end{prop}

%Integrate the asset price model and see if we get an explicit expression! (Cont and Tankov  489)

\section{Multivariate TPL processes}\label{sec:multivariate}
Stochastic self-similarity with respect to the negative binomial subordinator of the gamma process can be exploited for generating multivariate TPL L\'evy processes in a natural way, which we illustrate in the following. 
Multivariate g.i.d. laws are studied in \cite{mit+rac:91}: recently, multivariate  Mittag-Leffler distributions have been explored in \cite{alb:20}  and \cite{kho+al:20}. 

\smallskip

Let $d \in \mathbb N$ and $X=(X_t)_{ t\geq 0}$ with 
$
X_t=( X^1_t, \ldots, X_t^d)
$ be an independent  multivariate  TPL$(\gamma_i, \lambda_i , 1, \theta_i)$
 L\'evy process i.e. $X$ is such that  for all $t$, $X^i_t$ is independent from $X^j_t$ whenever $i \neq j$. %To avoid overparametrization we assume unit $\lambda$ but this is not a requirement. 
  Let  $B^\pi$ be a negative binomial process NB$(\pi, \delta, 1, \delta)$ independent of $X$. According to Corollary \ref{TPLinv} the subordinate multivariate L\'evy process  $X^\pi=(X^\pi_t)_{t \geq 0}$ with
   \begin{equation}
  X^\pi_t=( X^{\pi,1}_t, \ldots, X_t^{\pi,d}):=(X^1_{B^\pi_t}, \ldots, X^d_{B^\pi_t})  \end{equation} is such that
$X^{\pi,i}=(X^{\pi,i})_{t\geq 0}$ has  $\text{TPL}(\gamma_i,   \lambda_i \pi^{-1} , \delta, \theta_i)$ law. Therefore $ X^\pi$ is a multivariate L\'evy  process with correlated TPL marginals, conditionally independent on $B^\pi$, and the success probability $\pi$ plays the role of a dependence parameter with the degenerate case $\pi=1$ amounting to the independent case ($B^\pi$ being pure drift in such a case). %Scale explosions can be controlled by acting on the $\lambda_i$s. 
According to the general properties of TPL laws illustrated in Section \ref{sec:dist}, depending on whether $\gamma_i \in (0,1]$ or $\gamma_i<0$ the marginal processes can be either infinite activity with nonintegrable L\'evy marginal measure and absolutely-continuous law, or CPPs, whose law has a point mass in zero.

\smallskip
A useful alternative representation of $X^\pi$ can also be provided. By virtue of Remark \ref{equivrep}   for all $i=1, \ldots, d$, we can interpret the marginal processes $X^i$ as  $X^i= Y^i_{Z^i}$, for two independent multivariate L\'evy processes $Y^i$ and $Z^i$, where $Y^i$ is a TPS$(\gamma_i, 1,\theta_i)$ process and  $Z^i$ is a  gamma $G(\lambda_i, 1)$ process independent of $Y^i$ and therefore
\begin{equation}\label{xmulti}
X_t=^d\left(Y^1_{Z^1_t}, \ldots, Y^d_{Z^d_t}\right) .
\end{equation}
Choosing further  $Y^j$, $Y^i$ and $Z^i$, $Z^j$ to be independent whenever $i \neq j$, we can introduce two independent multivariate 
L\'evy processes $Y=(Y^1_t, \ldots , Y_t^d)$ and $Z=(Z^1_t, \ldots , Z_t^d)$ with independent marginals and \eqref{xmulti} has the interpretation of a multivariate subordination of $Y$  to  $Z$, as detailed \cite{bn+al:01}. We shall denote multivariate subordination in the same way as the standard one, and therefore \eqref{xmulti} implicates $X=Y_Z$.
 Furthermore, by Proposition \ref{sssTPL} it holds
\begin{equation}\label{xpimulti}
X^{\pi}_t=^d\left(\left(Y^1_{{Z^1}}\right)_{B^\pi_t}, \ldots, \left(Y^d_{{Z^d}}\right)_{B^\pi_t}\right)  =^d\left(Y^1_{Z^{\pi,1}_t}, \ldots,Y^d_{Z^{\pi,d}_t}\right), 
\end{equation}
where $Z^{\pi,i}=Z^{i}_{B^\pi}$ are $G(\lambda_i \pi^{-1},  \delta)$ processes, making  $Z^\pi=(Z^{\pi}_t)_{t \geq 0}$ given by
\begin{equation}\label{zpi}
Z^\pi_t=(Z^{\pi,1}_t, \ldots , Z^{\pi,d}_t)
\end{equation}
into a multivariate gamma subordinator with dependent marginals. Therefore, $X^\pi$ enjoys the multivariate subordinated representation \begin{equation}\label{eq:xmultisub}X^\pi=Y_{Z^\pi}.\end{equation}  %where the subordinator $Z^{\pi,i}_t$ is driftless. 
In order to further investigate $X^\pi$  we first compute the L\'evy density of $Z^\pi$, which is also of independent interest. Notice that unlike $X^\pi$, $Z^\pi$ is a multivariate process attained by ordinary subordination.

\begin{prop}\label{xpiprop}
The process $Z^\pi$ is a multivariate L\'evy subordinator with zero drift and   L\'evy measure
\begin{equation}\label{eq:rhopi}
\rho^\pi(dt_1 \ldots  dt_d) =\delta \left( \prod_{i=1}  ^d   \frac{e^{-t_i\pi/\lambda_i} - e^{-t_i /\lambda_i}}{t_i}dt_1 \ldots  dt_d   + \sum_{i=1}^d  \frac{e^{-t_i/\lambda_i }}{t_i}d t_i \right)\I_{\{t_i>0\}}.
\end{equation}
\end{prop}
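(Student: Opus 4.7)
The fundamental observation is that $Z^\pi$ coincides in law with the common scalar subordination $Z_{B^\pi}$ of the multivariate gamma L\'evy process $Z = (Z^1, \ldots, Z^d)$, whose marginals $Z^i \sim G(\lambda_i, 1)$ are independent, by the univariate subordinator $B^\pi$. The plan is to apply the multivariate version of the subordination formula \eqref{eq:Bochner}, as developed in \cite{bn+al:01} or \citet[Thm.~30.1]{sat:99}, to read off the triplet of $Z^\pi$ from those of its components.

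First I record the L\'evy characteristics of $Z$ and $B^\pi$. By independence of the marginals and \eqref{eq:gammaCE}, the L\'evy measure $\mu_Z$ of $Z$ is singular and concentrated on the union of positive coordinate semi-axes: on the $i$-th such semi-axis it has density $e^{-t_i/\lambda_i}/t_i$, while the drift and Gaussian component of $Z$ both vanish. For $B^\pi$, expanding the logarithm of \eqref{eq:NBlaplace} with parameters $(\pi,\delta,1,\delta)$ gives $\phi_{B^\pi}(s) = \delta s - \delta \log \pi + \delta \log(1-(1-\pi)e^{-s})$, which exhibits $B^\pi$ as the sum of a deterministic drift $\delta t$ and a compound Poisson process with lattice-valued logarithmic jumps \eqref{eq:logdis}; the associated L\'evy measure is the atomic $\rho_{B^\pi} = \sum_{k\ge 1}\delta(1-\pi)^k/k\,\delta_{\{k\}}$.

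Applying the multivariate subordination formula yields two outputs: (i) the drift of $Z^\pi$ equals the drift of $B^\pi$ times the drift of $Z$, hence zero; and (ii) $\rho^\pi = \delta \mu_Z + \int_0^\infty P(Z_s\in\cdot)\,\rho_{B^\pi}(ds)$. The first summand reproduces verbatim the axial term $\delta\sum_i e^{-t_i/\lambda_i}/t_i\,dt_i$ appearing in \eqref{eq:rhopi}. For the integral summand, the joint law $P(Z_k\in d t_1\cdots d t_d) = \prod_i t_i^{k-1}e^{-t_i/\lambda_i}/((k-1)!\,\lambda_i^k)\,dt_i$, combined with the atomic form of $\rho_{B^\pi}$, reduces the integral to an explicit series in $k$ whose coefficients mix the geometric factor $(1-\pi)^k/k$ from $\rho_{B^\pi}$ with the $\Gamma(k)$-type normalisation coming from the gamma shapes.

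The main calculation — and the step I expect to be the only real obstacle — is identifying this series with the closed product form on the right of \eqref{eq:rhopi}. In one dimension the identity $\sum_{k\ge 1}u^k/k!=e^u-1$ with $u = t(1-\pi)/\lambda$ immediately produces $\delta(e^{-t\pi/\lambda}-e^{-t/\lambda})/t$, the factor featuring in \eqref{eq:rhopi}; the same manipulation, carried out coordinatewise after factoring the series and exploiting the separable exponential tilt of the conditional joint gamma density, gives the general $d$-dimensional product. A routine dominated-convergence argument, using the exponential decay of the gamma densities together with the factor $(1-\pi)^k$, legitimises the termwise integration against $\rho_{B^\pi}$ and the interchange of sum and integral.
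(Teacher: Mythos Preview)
Your approach is exactly the paper's: apply the scalar subordination formula to $Z_{B^\pi}$, split the resulting L\'evy measure into the drift contribution $\delta\,\mu_Z$ (the axial sum in \eqref{eq:rhopi}) and the integral against the atomic measure $\rho_{B^\pi}$, and then sum the series. The axial term and the one-dimensional computation are both fine.

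The step you flag as ``the only real obstacle'' is precisely where the argument breaks for $d\ge 2$. The series to be summed is
\[
\sum_{k\ge 1}\frac{(1-\pi)^k}{k}\prod_{i=1}^d\frac{t_i^{k-1}}{(k-1)!\,\lambda_i^{k}}
\;=\;\Bigl(\prod_{i=1}^d\frac{1}{t_i}\Bigr)\sum_{k\ge 1}\frac{(1-\pi)^k}{k\,[(k-1)!]^{\,d}}\prod_{i=1}^d\Bigl(\frac{t_i}{\lambda_i}\Bigr)^{k},
\]
and this does \emph{not} factor coordinatewise: the summation index $k$ is common to all $d$ gamma marginals, so the normaliser $[(k-1)!]^{d}$ and the single factor $(1-\pi)^k/k$ cannot be split into $d$ independent sums. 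For $d=1$ the series is indeed $\sum_{k\ge 1}u^k/k!=e^u-1$; but already for $d=2$ it equals $\sum_{k\ge 1}v^{k}/(k!(k-1)!)=\sqrt{v}\,I_1(2\sqrt{v})$ with $v=(1-\pi)t_1t_2/(\lambda_1\lambda_2)$, a modified Bessel function rather than a product $(e^{u_1}-1)(e^{u_2}-1)$. The ``separable exponential tilt'' $\prod_i e^{-t_i/\lambda_i}$ does factor out, but that is not the part causing the trouble.

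Since the product in \eqref{eq:rhopi} is exactly what such a coordinatewise factorisation would produce, the gap is not cosmetic: the stated absolutely-continuous part of $\rho^\pi$ does not match $\phi_{B^\pi}\circ\phi_Z$ for $d\ge 2$ (a direct check at $s_2=0$ already shows the discrepancy). The paper's own derivation makes the same illicit passage between the first and second lines of \eqref{eq:rhopiproof}, so you are in good company, but the ``carried out coordinatewise'' claim in your proposal needs to be withdrawn.
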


\begin{proof}
By using the p.m.f. \eqref{eq:logdis} %of the i.i.d jumps in the NB$_t$ CPP representation 
one can show that the L\'evy measure $r^{\pi}$ of $B^\pi$ is (e.g. \citealt{koz+pod:09})
\begin{equation}\label{eq:NBLevy}
r^{\pi}= \delta \sum_{k =1}^\infty \frac{(1-\pi)^k}{k}\delta_{k},
\end{equation}
where $\delta_k$ is the Dirac measure concentrated in $k$.
With a slight abuse of notation, we write the  multivariate L\'evy density of $Z$ as
\begin{equation}
u_Z(dt_1 \ldots dt_d)=\sum_{i=1}^d \frac{e^{-t_i/\lambda_i }}{t_i} \I_{\{t_i >0\}} d t_i
\end{equation}
and the multivariate independent gamma law $\mu^Z_{t}$ as
\begin{equation}
\mu^Z_{t}=\prod_{i=1}^d f_{Z^i}(t_i; \lambda_i, t ) dt_1 \ldots  dt_d = \prod_{i=1}^d \frac{t_i^{t-1}}{\Gamma(t) \lambda_i^t}e^{-t_i/\lambda_i} \I_{\{t_i>0\}}dt_1 \ldots  dt_d.
\end{equation}
%and denoting $A^+_i$ the $i$--th positive coordinate axis
 Using the multivariate version of the (ordinary) subordination integral \eqref{eq:Bochner} (\citealt{sat:99}, Chapter 30) with triplets $(0, 0, u_Z(dt_1 \ldots dt_d))$ and  $(0, \delta, r^{\pi})$, and probability law $\mu^Z_{t}$ we have, applying monotone convergence to interchange integration in $du$ and the series %and using independence of the marginals of $Z_t$. For $B \in  \mathcal B(\mathbb R^d_+ \setminus\{0\})$ 
\begin{align}\label{eq:rhopiproof}
\rho^\pi(dt_1 \ldots  dt_d)&=\delta  \sum_{k =1}^\infty \frac{(1-\pi)^k}{k} \int_{(0, \infty)} \left( \prod_{i=1}^d \frac{t_i^{u-1}}{\Gamma(u) \lambda_i^u}e^{-t_i/\lambda_i} dt_1 \ldots  dt_d   \right) \delta_{k}(du) +\delta \sum_{i=1}^d  \frac{e^{-t_i/\lambda_i }}{t_i}\I_{\{t_i>0\}} d t_i \nonumber \\ &= \delta  \prod_{i=1}^d   \frac{e^{-t_i/\lambda_i}}{t_i}\I_{\{t_i>0\}}  \sum_{k =1}^\infty \left( \frac{t_i(1-\pi)}{\lambda_i}\right)^k \frac{1}{k!}dt_1 \ldots  dt_d +\delta \sum_{i=1}^d \frac{e^{-t_i/\lambda_i }}{t_i}\I_{\{t_i>0\}}d t_i \nonumber \\ &= \delta   \prod_{i=1}  ^d   \frac{e^{-t_i/\lambda_i }}{t_i}\I_{\{t_i>0\}}  \left(\frac{e^{t_i(1-\pi) /\lambda_i }}{t_i}-1 \right) dt_1 \ldots  dt_d + \delta \sum_{i=1}^d   \frac{e^{-t_i /\lambda_i }}{t_i}\I_{\{t_i>0\}} d t_i ,
\end{align}
which proves  \eqref{eq:rhopi}.
\end{proof}

Together with the foregoing discussion, Proposition \ref{xpiprop} allows the identification of the L\'evy structure of $X^\pi$.

%We have the following result on the multivariate Gamma subordinator constructed by Proposition \eqref{sssTPL} and generalizing Barndorff Nielsen et Sato 2000.

%\begin{equation}

%\end{equation}

\begin{thm}
\label{MultiTPL}
%Let $d>0$ and $X=(X^i_t)_{1\leq i \leq d, t\geq 0}$ be a multivariate TPL L\'evy process such that $X^i_t$ is independent from $X^j_t$ for all $i \neq j$ and:
%\begin{equation}\label{eq:multiTPLTindip}
%X^i_1 \sim TPL(\gamma_i, \lambda_i , 1, \theta_i)
%\end{equation}
%for some parameters $\lambda_i, \gamma_i, \theta_i>0$.

%For $\delta>0$, and $\pi \in \mathbb (0,1)$ let $NB$ be a negative binomial process $NB_t \sim NB(\pi, \delta, \delta)$ independent of $X_t$. Then $X_t$ is statistically invariant with respect to $NB$ and the process $X_{NB}=(X_{NB_t})_{t \geq 0}$ is a multivariate L\'evy process with Laplace transform 

The process $X^\pi$ is a multidimensional L\'evy subordinator with multivariate characteristic exponent, for \upshape{Re}$(s_i)>0$, $i=1, \ldots, d$ given by
\begin{equation}\label{eq:multiTPLT}
\phi_{X^\pi}(s_1, \ldots, s_d)=\delta \log \left( 1 + \frac{1}{\pi}- \frac{1}{\pi} \prod_{i=1}^d \left(1+ \text{\upshape{sgn}}(\gamma_i) \lambda_i((\theta_i + s_i)^{\gamma_i} -\theta_i^{\gamma_i}  ) \right)\right).
\end{equation} %P.d.f
%\begin{equation}\label{eq:multivarPDF}
%.\end{equation}
%and triplet $(0,0, v^\pi( x_1,  \ldots, x_d)dx_1, \ldots, %dx_d)$
Furthermore $X^\pi$ has zero drift, and  L\'evy density
\begin{align}\label{eq:multivarLevy}
&u^\pi_X( x_1,  \ldots, x_d)= \nonumber  \delta \sum_{A \subseteq \{1, \ldots,  d \}}(-1)^{|A|}  \prod_{i \in A} |\gamma_i| \frac{e^{-\theta_i x_i}}{x_i}\left(E_{|\gamma_i|} \left(  \ci  x_i^{ |\gamma_i|}\right) - \I_{\{\text{\upshape{sgn}}(\gamma_i)= -1\}} \right) \nonumber \\ & \phantom{xxxxxxx}  \ \times \prod_{i \in A^c}  |\gamma_i| \frac{e^{-\theta_ix_i}}{x_i}  \left(E_{|\gamma_i|} \left(  \cipi  x_i^{ |\gamma_i|}\right) - \I_{\{\text{\upshape{sgn}}(\gamma_i)= -1\}} \right) \nonumber \\& \phantom{xxxxxxx}  +\delta \sum_{i=1}^d  |\gamma_i| \frac{e^{-\theta_ix_i}}{x_i}  \left(E_{|\gamma_i|} \left(  \ci  x_i^{ |\gamma_i|}\right) - \I_{\{\text{\upshape{sgn}}(\gamma_i)= -1\}} \right)
\end{align}
for $x_i>0$, and zero otherwise, with $(\gamma_i, \lambda_i, \delta_i, \theta_i) \in S$ for all $i=1,\ldots, d$, where the constants $\ci$ and $\cipi$ are given by \eqref{c} with the appropriate parameter modifications.

% having set 
%\begin{equation}\label{ci}
% \ci:= \left(\frac{\lambda_i \theta_i^{\gamma_i}-\text{\upshape{sgn}}(\gamma_i)}{\lambda_i}\right)^{\text{\upshape{sgn}}(\gamma_i)} .
%\end{equation}

\end{thm}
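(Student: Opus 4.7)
The plan is to deploy the two subordinated representations of $X^\pi$ developed earlier in the section: the scalar negative binomial subordination $X^\pi = X_{B^\pi}$, which is convenient for the joint Laplace exponent, and the multivariate gamma subordination $X^\pi = Y_{Z^\pi}$ of \eqref{eq:xmultisub}, which is convenient for the L\'evy measure.

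For the characteristic exponent, conditioning on $B^\pi_1$ gives
\[
\phi_{X^\pi}(s_1,\ldots,s_d)=\phi_{B^\pi}\!\left(\sum_{i=1}^d\phi_{X^i}(s_i)\right),
\]
since the $X^i$ are independent. Each $\phi_{X^i}(s_i)$ equals $\log P_i$ with $P_i:=1+\mathrm{sgn}(\gamma_i)\lambda_i((\theta_i+s_i)^{\gamma_i}-\theta_i^{\gamma_i})$ by \eqref{eq:TPLdef}, so $\exp(-\sum_i\phi_{X^i}(s_i))=\prod_i P_i^{-1}$. Reading off $\phi_{B^\pi}$ from \eqref{eq:NBlaplace} with the parameters $(\pi,\delta,1,\delta)$ of $B^\pi$ produces $-\delta\log\pi+\delta\log(1-(1-\pi)\prod_i P_i^{-1})+\delta\log\prod_i P_i$, which after factoring $\prod_i P_i$ inside the first logarithm collapses to the single-log form \eqref{eq:multiTPLT}.

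For the L\'evy density I would apply the multivariate Bochner subordination identity (\citealt{sat:99}, Ch.~30) to $X^\pi=Y_{Z^\pi}$, with $Y=(Y^1,\ldots,Y^d)$ the independent TPS process introduced before \eqref{xmulti}:
\[
\nu_{X^\pi}(dx_1\cdots dx_d)=\int_{(0,\infty)^d}\prod_{i=1}^d P^{Y^i}_{t_i}(dx_i)\,\rho^\pi(dt_1\cdots dt_d),
\]
with $\rho^\pi$ taken from Proposition~\ref{xpiprop}. The product-measure piece of $\rho^\pi$ factorizes the integral as $\delta\prod_i\int_0^\infty P^{Y^i}_{t_i}(dx_i)\frac{e^{-\pi t_i/\lambda_i}-e^{-t_i/\lambda_i}}{t_i}dt_i$, and each factor is a difference of two univariate L\'evy densities which, by Proposition~\ref{LevyTPL} (reading $e^{-t/\sigma}/t$ as the L\'evy density of a $G(\sigma,1)$ gamma process and subordinating the TPS$(\gamma_i,1,\theta_i)$ process $Y^i$), admit the unified closed form \eqref{eq:LM} at the two scale parameters giving rise to the constants $\ci$ and $\cipi$ of \eqref{c}. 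Distributing the resulting product of differences over subsets $A\subseteq\{1,\ldots,d\}$ immediately produces the alternating-sign sum appearing in the first line of \eqref{eq:multivarLevy}. The remaining axial piece of $\rho^\pi$ (the $\sum_i$ term in \eqref{eq:rhopi}, supported on the coordinate half-axes) contributes through the same identity the marginal L\'evy densities of TPL$(\gamma_i,\lambda_i,\delta,\theta_i)$, accounting for the second sum in \eqref{eq:multivarLevy}; driftlessness of $X^\pi$ is inherited from the purely jump-type TPS components of $Y$ together with the vanishing drift of $Z^\pi$ recorded in Proposition~\ref{xpiprop}.

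The main obstacle I anticipate is handling the two regimes $\gamma_i\in(0,1]$ and $\gamma_i<0$ uniformly within the single formula \eqref{eq:multivarLevy}: in the former each $Y^i$ is an infinite-activity subordinator with absolutely continuous transition density on $(0,\infty)$, whereas in the latter it is a CPP carrying a Dirac mass at $0$ that leaks onto the lower-dimensional coordinate hyperplanes. Arguing that, strictly away from those hyperplanes, only the absolutely continuous part of $\prod_i P^{Y^i}_{t_i}(dx_i)$ survives — so that each inner integral collapses to the unified density \eqref{eq:LM} and the indicator $\I_{\{\mathrm{sgn}(\gamma_i)=-1\}}$ therein carries all of the Mittag-Leffler versus residual corrections — is the careful accounting needed to match \eqref{eq:multivarLevy} term-by-term across both regimes simultaneously.
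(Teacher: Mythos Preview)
Your proposal is correct and follows essentially the same route as the paper: the characteristic exponent via $\phi_{B^\pi}\!\bigl(\sum_i\phi_{X^i}\bigr)$, and the L\'evy density via the multivariate subordination $X^\pi=Y_{Z^\pi}$ together with the explicit $\rho^\pi$ of Proposition~\ref{xpiprop}. The only cosmetic difference is that the paper first expands the product $\prod_i(e^{-\pi t_i/\lambda_i}-e^{-t_i/\lambda_i})/t_i$ into the alternating subset sum \emph{and then} integrates each term against the TPS transition laws (citing \cite{bn+al:01}, Theorem~3.3, rather than \cite{sat:99}, Ch.~30, since this is vector-time subordination), whereas you integrate each factor first and expand afterwards---the two orderings are equivalent by distributivity.
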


\begin{proof}

By definition of $X^\pi$ and independence
\begin{equation}
\phi_{X^\pi}(s_1, \ldots, s_d)=\phi_{B^\pi}(\phi_X(s_1, \ldots , s_d))=\phi_{B^\pi}(\phi_X(s_1) \ldots \phi_X(s_d))
\end{equation}
and \eqref{eq:multiTPLT} is then clear from \eqref{eq:TPLdef} and \eqref{eq:NBlaplace}.

%Let us assume first $\gamma \in (0,1)$.
%By using the p.g.f. \eqref{eq:logdis} %of the i.i.d jumps in the NB$_t$ CPP representation 
%one can show that the L\'evy measure $r^{\pi}$ of B$^\pi$ %is (e.g. \citealt{koz+pod:09})
%\begin{equation}\label{eq:NBLevy}
%r^{\pi}= \delta \sum_{k =1}^\infty \frac{(1-\pi)^k}{k}%\delta_{k}
%\end{equation}
%where $\delta_k$ is the Dirac delta concentrated in $k$. %Using the multidimensional version \eqref{eq:Bochner} with $\rho$ as above does not seem to yield a manageable series. However 
%By virtue of Remark \ref{equivrep}   for all $i=1, \ldots, %d$, we have that for two independent multivariate L\'evy %processes $Y=(Y^1_t, \ldots , Y_t^d)$, and $Z=(Z^1_t, %\ldots , Z_t^d)$,  having as marginals $Y^i$ and $Z^i$  %respectively  TPS$(\gamma_i, 1,\theta_i)$ and   G$%(\lambda_i, 1)$, independent whenever $i \neq j$. %Furthermore by Proposition \ref{sssTPL} it is
%\begin{equation}
%X^{\pi}_t=^d\left(Y^1_{{Z^1}_{B^\pi_t}}, \ldots,Y^d_{{Z^d}%_{B^\pi_t}}\right)  =^d\left(Y^1_{Z^{\pi,1}_t}, %\ldots,Y^d_{Z^{\pi,d}_t}\right) 
%\end{equation}
%where $Z^{\pi,i}_t$ are independent G$(\lambda_i \pi^{-1}, %t \delta)$ variables, making   $Z^\pi=(Z^{\pi}_t)_{t \geq %0}$ given by
%\begin{equation}
%Z^\pi_t=(Z^{\pi,1}_t, \ldots , Z^{\pi,d}_t)
%\end{equation}
%into a multivariate gamma subordinator. The above has the %interpretation of a multivariate subordination of $Y$  to  %$Z^\pi$ as in \cite{bn+al:01}. %where the subordinator %$Z^{\pi,i}_t$ is driftless. 
We indicate with $\mu^Y_{t_1, \ldots, t_n}$ the probability law of the random vector $(Y^1_{t_1}, \ldots Y^d_{t_d})$
 and by $\rho^\pi$ the L\'evy measure of $Z^\pi$ given in Proposition \ref{xpiprop}. By virtue of \eqref{eq:xmultisub} we can apply \cite{bn+al:01} Theorem 3.3, and we see that $X^{\pi}$ has L\'evy triplet  $(0,0, \eta^\pi)$ with 

\begin{equation}\label{eq:bochnerMult}
\eta^\pi(B)=\int_{\mathbb R_+^d}\mu^Y_{t_1, \ldots, t_n}(B)\rho^\pi(dt_1 \ldots  dt_n)
\end{equation}
for Borel sets $B \subseteq \mathbb R^d_+$.  %Calculation of $\rho^\pi$ can be performed observing that $Z$ has multivariate L\'evy density
%\begin{equation}
%u_Z(dt_1, \ldots, dt_d)=\sum_{i=1}^d \frac{e^{-t_i/\lambda_i }}{t_i} \I_{\{t_i >0\}} d t_i
%\end{equation}
%and multivariate independent gamma law $\mu^Z_{t_1 \ldots t_d}$.
%and denoting $A^+_i$ the $i$--th positive coordinate axis
% Using the multivariate version of \eqref{eq:Bochner} (\citealt{sat:99}, Chapter 30) with triplets $(0, 0, u_Zdt_1 \ldots dt_d)$ and  $(0, \delta, r^{\pi})$, and probability laws $\mu^Z_{t_1, \ldots, t_d}$ we have %and using independence of the marginals of $Z_t$. For $B \in  \mathcal B(\mathbb R^d_+ \setminus\{0\})$ 
%\begin{align}\label{eq:rhopi}
%\rho^\pi(dt_1, \ldots , dt_n)&=\delta \int_{\mathbb R_+} \prod_{i=1}^d \frac{e^{-t_i/\lambda_i}}{\Gamma(u) \lambda_i^u}t_i^{\lambda_i-1}dt_1 \ldots  dt_d  \sum_{k =1}^\infty \frac{(1-\pi)^k}{k}\delta_{k}(du) +\delta \sum_{i=1}^d  \frac{e^{-t_i/\lambda_i }}{t_i} d t_i \nonumber \\ &= \delta  \prod_{i=1}^d   \frac{e^{-t_i/\lambda_i}}{t_i}\I_{\{t_i>0\}}dt_1 \ldots  dt_d  \sum_{k =1}^\infty \left( \frac{t_i(1-\pi)}{\lambda_i}\right)^k \frac{1}{k!} +\delta \sum_{i=1}^d \frac{e^{-t_i/\lambda_i }}{t_i}\I_{\{t_i>0\}}d t_i \nonumber \\ &= \delta   \prod_{i=1}  ^d   \frac{e^{-t_i/\lambda_i }}{t_i}\I_{\{t_i>0\}}dt_1 \ldots  dt_d  \left(\frac{e^{t_i(1-\pi) /\lambda_i }}{t_i}-1 \right)  + \delta \sum_{i=1}^d   \frac{e^{-t_i /\lambda_i }}{t_i}\I_{\{t_i>0\}} d t_i \nonumber \\ &=\delta \left( \prod_{i=1}  ^d   \frac{e^{-t_i\pi/\lambda_i} - e^{-t_i /\lambda_i}}{t_i}dt_1 \ldots  dt_d   + \sum_{i=1}^d  \frac{e^{-t_i/\lambda_i }}{t_i}d t_i \right)\I_{\{t_i>0\}}.
%\end{align} 
Now by independence we have the product measure
\begin{equation}\label{eq:indepmarg}
\mu^Y_{t_1, \ldots, t_n}=\prod_{i=1}^d \mu^{Y^i}_{t_i}
\end{equation}
%where $ \mu^Y_{t_i}=f_{Y}(x_i;\gamma_i,\theta_i,t_i)dx_i$ with $f_Y$ as in \eqref{eq:TPSpdf} if $\gamma_i \in (0,1)$, or  $\mu^Y_{t_i}$ is given \eqref{eq:CPP} with the appropriate parameters if $\gamma_i <0$. Substituting \eqref{eq:rhopi} and \eqref{eq:indepmarg}  in \eqref{eq:bochnerMult} 
where $\mu^{Y^i}_{t}$ indicates the law of $Y_t^i$.
Substituting \eqref{eq:rhopi} and \eqref{eq:indepmarg}  in \eqref{eq:bochnerMult} and using Proposition \ref{LevyTPL} in the second summand of \eqref{eq:rhopi}, we obtain the density for $x_i>0$,
\begin{align}\label{eq:etapi}
u^\pi_X( x_1, \ldots, x_d)
%&\int_{\mathbb R_+^d} \prod_{i=1}^d \mu^Y_{t_i}\frac{e^{-t_i\pi/\lambda_i} - e^{-t_i /\lambda_i}}%{t_i}dt_1 \ldots  dt_d    + \sum_{=1}^d\int_{B \cap A_i} %\prod_{i=1}^d f_{Y}(x_i;\gamma_i,\theta_i,t_i) \frac{e^{-t_i/%\lambda_i }}{t_i}d t_i \nonumber \\ 
  =\delta \int_{\mathbb R_+^d} \prod_{i=1}^d & f_{Y^i}(x_i;\gamma_i,\theta_i,t_i)\frac{e^{-t_i\pi/\lambda_i} - e^{-t_i /\lambda_i}}{t_i}dt_1 \ldots  dt_d \nonumber \\ &   + \delta \sum_{i=1}^d |\gamma_i| \frac{e^{-\theta_ix_i}}{x_i}   \left( E_{|\gamma_i|} \left( \ci x_i^{|\gamma_i|}\right) \I_{\{\text{\upshape{sgn}}(\gamma_i)= -1\}} \right)
\end{align}
where $f_{Y^i}(x_i;\gamma_i,\theta_i,t_i)$ is given by \eqref{eq:TPSpdf} if $\gamma_i \in (0,1)$, or the absolutely continuous part of \eqref{eq:CPP} if $\gamma_i <0$. Now observe the additive expansion:
\begin{equation}\label{eq:vpifirst}
\prod_{i=1}  ^d   \frac{e^{-t_i\pi/\lambda_i} - e^{-t_i /\lambda_i}}{t_i}=\sum_{A \subseteq \{1, \ldots, d \}}(-1)^{|A|}\exp\left(-\sum_{i\in A}\frac{t_i}{\lambda_i}- \pi \sum_{i\in A^c}\frac{t_i}{\lambda_i}  \right)\prod_{i=1}^d t_i^{-1} 
\end{equation}
 where  $|A|$ denotes the cardinality of the subset $A$. %, and the convention $\sum_{i \in \emptyset} \cdot =0$.
 In view of \eqref{eq:vpifirst} we can rewrite the first term in \eqref{eq:etapi} as
  \begin{align}\label{eq:etapi2}
\int_{\mathbb R_+^d}& \prod_{i=1}^d  f_{Y^i}(x_i;\gamma_i,\theta_i,t_i)\frac{e^{-t_i\pi/\lambda_i} - e^{-t_i /\lambda_i}}{t_i}dt_1 \ldots  dt_d  \nonumber \\&= \sum_{A \subseteq \{1, \ldots, d \}}(-1)^{|A|}\left(   \prod_{i \in A} \int_{\mathbb R_+} f_{Y^i}(x_i;\gamma_i,\theta_i,t_i) \frac{e^{- t_i/\lambda_i }}{t_i} dt_i  \prod_{i \in A^c} \int_{\mathbb R_+} f_{Y^i}(x_i;\gamma_i,\theta_i,t_i)\frac{e^{-\pi t_i/\lambda_i }}{t_i} dt_i  \right)
\end{align}
%with  $f_{Y}(x_i;\gamma_i,\theta_i,t_i)$ indicating the absolutely continuous part of $\mu_{t_i}^Y$  of  \eqref{eq:CPP} if and
with the product term corresponding to the empty set being one. Replicating  the integrations in Proposition \ref{LevyTPL} we finally arrive at \eqref{eq:multivarLevy}.
\end{proof}

 The L\'evy measure of $X^\pi$ thus decomposes in an independent multivariate TPL measure plus a combinatorial expression of one-dimensional TPL L\'evy measures depending on $\pi$ accounting for the dependence across the marginals, which increasingly gains weight as $\pi$ decreases from one to zero.
In the case $\gamma_i=1$ for all $i$ we notice from \eqref{eq:multiTPLT} that  $X^\pi_t$ follows the multivariate gamma law discussed in \cite{gav:70} and generalizing \cite{kib:41}, which is widely popular for applications.  % given with çaplace transform

%\begin{equation*}
%L_{X^\pi}(s)=(1+\lambda_1s_1)(\lambda_2s_{_2}+\pi\lambda_1%
%\lambda_2s_1s_2)^{-\delta}\text{ ,}
%\end{equation*}

%\subsection{Generalization to scale families of L\'evy processes}

%The construction above is general, and we have the following concluding proposition

%L\'evy measure and marginals

\bigskip

%\medskip

%HERE ALL THE STATISTICAL ANALYSIS OF THE DISTRIBUTION

%\medskip

%SIMULATION OF SYNTHETIC DATA

%\section{Numerical examples and data modelling}

\section{Potential for statistical anti-fraud applications}
\label{sec:potential}

One major motivation for our interest in the univariate TPS and TPL laws is their ability to model international trade data, with particular reference to imports into (and exports from) the Member States of the European Union (EU). Due to the combination of economic activities and normative constraints, the empirical distribution of traded quantities and traded values in imports and exports is often markedly skewed with heavy tails, featuring a large number of rounding errors in small-scale transactions due to data registration problems, and structural zeros arising because of confidentiality issues related to national regulations within the EU. While such features are not easy to be combined into a single statistical model, \cite{bar+al:16b} and \cite{bar+al:16a} show that in the univariate case both the TPS and TPL distributions do provide reliable models for the monthly aggregates of import quantities of several products of interest.

The main operational target of the research line on international trade data sketched above is the construction of sound statistical methods for the detection of customs frauds, such as the under-valuation of import duties, and  the investigation of other trade-related infringements, such as money laundering and circumvention of regulatory measures. In this framework flexible statistical models that can accurately describe the distribution of traded quantities and values for a very large number of products is of paramount importance for several reasons. Firstly, such models could provide direct support to policy makers, e.g. in the form of tools for monitoring the effect of policy measures and for providing factual background for the official communications on trade policy. Another goal, which is  perhaps even more prominent from a statistical standpoint, is their use in model-based assessments of the performance of methods used for finding relevant signals of potential fraud.

Most of the fraud detection tools adopted in the context of international trade look for anomalies in the data. Therefore, they typically make use of outlier detection methods for multivariate and regression data, such as those described in \cite{cer:10} and \cite{per+al:20}, as well as of robust clustering techniques \citep[see e.g.][]{CerPer:2014}. All of these techniques assume that the available data have been generated by an appropriate contamination model, which in the context of international trade typically involves at least two variables, in view of the basic economic relationship that yields the value of an individual import (export) transaction as the product of the traded amount and the unit price. Any parameter of the distribution that models the ``genuine'' part of the data must then be estimated in a robust way, in order to avoid the well-known masking and swamping effects due to the anomalies themselves \citep[see e.g.][]{cer+al:19sjs}. Relying on %the asymptotic theory of robust high-breakdown estimators, 
the theory of robust high-breakdown estimation, that typically assumes elliptical symmetry of the uncontaminated data-generation process, it is very difficult to derive analytical results for such methods %in finite samples, especially 
when %non-Gaussian 
skewed distributions should be used for realistic modeling of %the uncontaminated data-generation process
economic processes. The available methods need thus to be compared, evaluated and eventually tuned on a large number of data sets artificially generated with known statistical properties, which must reflect the distributions observed in real-world trade data. \cite{CerPer:2014} show a first attempt in this direction under a rather specialized ad-hoc model. The class of multivariate TPL processes described in Section \ref{sec:multivariate} provides instead a very natural and general reference model, extending the framework suggested by \cite{bar+al:16a} to the simultaneous description of (at least) traded quantities and traded values. Reliable inferential results for anti-fraud diagnostics computed on trade data could then be obtained by simulation from this class of processes following a model-based Monte Carlo scheme, in the spirit e.g. of \cite{bes+dig:77}, \cite{bla+sor:14} and \cite{gue+al:19}.

%The same
A similar requirement holds for an alternative approach to fraud detection which has recently attracted considerable attention also in international trade and which rests on the development of powerful and accurate conformance tests of Benford's law \citep{bar+al:18b,cer+al:19,bar+al:21}. This approach aims at unveiling serial fraudsters and has proven to be especially effective for the analysis of individual customs declarations, instead of monthly aggregates of them. A variety of statistical procedures are compared by \cite{cer+al:19} and \citet[Section 7.2]{bar+al:21} through a bootstrap algorithm that generates pseudo-observations mimicking a national database of one calendar-year customs declarations, after appropriate anonymization that makes it impossible to infer the features of individual operators. The class of multivariate TPL processes can again provide a suitable reference framework for such comparisons when a model-based approach replicating international trade conditions is deemed desirable.

Figure \ref{fig:simdata} displays one sample of 5.000 observations from a bivariate TPL process simulated using representation \eqref{xpimulti}. 
The parameters of the marginal processes  are $\gamma_1=\gamma_2=-2.2$, $\lambda_1=\lambda_2=10$, $\theta_1=\theta_2=0.5$, while $\delta=1$ and
$\pi=0.01$ is the success probability relating $X^{\pi,1}$ and $X^{\pi,2}$. The visual similarity between the simulated scatter and the scatter shown in \citet[Section 4, p. 10]{JRC122315} for a homogeneous sample from a fraud-sensitive commodity is striking and confirms the potential of multivariate TPL processes for describing the joint distribution of relevant variables arising in international trade. Therefore, we  argue that suitably tuned versions of $X^\pi$ could lead to reliable simulation inference  for outlier labeling rules and other anti-fraud diagnostics in trade data structures, when the distributional assumption of symmetry for individual uncontaminated observations, typically implied by such methods, is not met. This is an important research goal for anti-fraud applications and international trade analysis, also foreseen in \citet[Section 7.2]{bar+al:21}.

\begin{figure}
\includegraphics[width=0.75\textwidth]{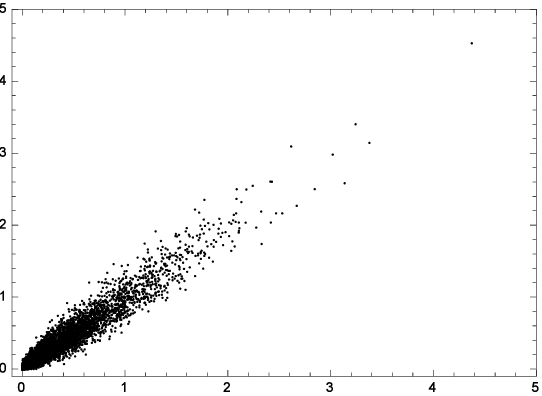}
{\caption[]{\small{Simulated sample from a multivariate TPL process with $d=2$, $\delta=1$, $\pi=0.01$ and marginal parameters given in the text.}} \label{fig:simdata}}
\end{figure}

%\section{Conclusions}\label{sec:conclusions}

%\bigskip

%\textbf{{\color{blue}E' necessaria questa sezione? Tutto sommato la toglierei}}

%Say that the construction in Section 5 is general to any subordinand process $X$ and $Z$ any TPL process.
%The natural real-valued extension of the Tempered  Linnik  distribution TL is given by the law with with characteristic function 
%\begin{equation}
%\E[e^{i z X_{TL}}]=\left(\frac{1}{1+ \lambda \text{sgn}(\gamma) ( (\theta -iz) ^ \gamma -\theta^\gamma)}\right)^{\delta}, \quad
%\end{equation}
% which we hope to investigate in future work alongside with its related processes and their applications.

\section*{Acknowledgments}

This research has financially been supported by the Programme ``FIL-Quota Incentivante'' of  the University of Parma and co-sponsored by Fondazione Cariparma. The authors thank  Peter Carr and Luca Pratelli for the helpful discussions on a previous draft. They are also grateful to   Domenico Perrotta and Francesca Torti of the Joint Research Centre of the European Commission %for assistance with the simulation exercise of Section \ref{sec:potential} and 
for inspiring the anti-fraud applications of the tempered processes described in this work.

\bibliographystyle{apalike}
%\nocite{*}

\bibliography{Bibliography}

\begin{thebibliography}{}

\bibitem[Albrecher et~al., 2021]{alb:20}
Albrecher, H., Bladt, M., and Bladt, M. (2021).
\newblock Multivariate matrix {M}ittag--{L}effler distributions.
\newblock {\em Annals of the Institute of Statistical Mathematics},
  73:369--394.

\bibitem[Barabesi, 2020]{bar:20}
Barabesi, L. (2020).
\newblock The computation of the probability density and distribution functions
  for some families of random variables by means of the {W}ynn-$\rho$
  accelerated {P}ost-{W}idder formula.
\newblock {\em Communications in Statistics -- Simulation and Computation},
  49:1331--1351.

\bibitem[Barabesi et~al., 2018a]{bar+al:18}
Barabesi, L., Becatti, C., and Marcheselli, M. (2018a).
\newblock The tempered discrete {L}innik distribution.
\newblock {\em Statistical Methods and Applications}, 27:45--68.

\bibitem[Barabesi et~al., 2016a]{bar+al:16a}
Barabesi, L., Cerasa, A., Cerioli, A., and Perrotta, D. (2016a).
\newblock A new family of tempered distributions.
\newblock {\em Electronic Journal of Statistics}, 10:3871--3893.

\bibitem[Barabesi et~al., 2018b]{bar+al:18b}
Barabesi, L., Cerasa, A., Cerioli, A., and Perrotta, D. (2018b).
\newblock Goodness-of-fit testing for the {N}ewcomb-{B}enford law with
  application to the detection of customs fraud.
\newblock {\em Journal of Business and Economic Statistics}, 36:346--358.

\bibitem[Barabesi et~al., 2021]{bar+al:21}
Barabesi, L., Cerasa, A., Cerioli, A., and Perrotta, D. (2021).
\newblock On characterizations and tests of {B}enford's law.
\newblock {\em Journal of the American Statistical Association}.
\newblock DOI: 10.1080/01621459.2021.1891927.

\bibitem[Barabesi et~al., 2016b]{bar+al:16b}
Barabesi, L., Cerasa, A., Perrotta, D., and Cerioli, A. (2016b).
\newblock Modeling international trade data with the {T}weedie distribution for
  anti-fraud and policy support.
\newblock {\em European Journal of Operational Research}, 248:1031--1043.

\bibitem[Barabesi and Pratelli, 2014]{bar+pra:14}
Barabesi, L. and Pratelli, L. (2014).
\newblock A note on a universal random variate generator for integer-valued
  random variables.
\newblock {\em Statistics and Computing}, 24:589--596.

\bibitem[Barabesi and Pratelli, 2015]{bar+pra:15}
Barabesi, L. and Pratelli, L. (2015).
\newblock Universal methods for generating random variables with a given
  characteristic function.
\newblock {\em Journal of Statistical Computation and Simulation},
  85:1679--1691.

\bibitem[Barabesi and Pratelli, 2019]{bar+pra:19}
Barabesi, L. and Pratelli, L. (2019).
\newblock On the properties of a {T}ak\'acs distribution.
\newblock {\em Statistics and Probability Letters}, 148:66--73.

\bibitem[Barndorff-Nielsen, 1997]{bn:97}
Barndorff-Nielsen, O.~E. (1997).
\newblock Processes of normal inverse {G}aussian type.
\newblock {\em Finance and Stochastics}, 2:41--68.

\bibitem[Barndorff-Nielsen, 2000]{bn:00}
Barndorff-Nielsen, O.~E. (2000).
\newblock Probability densities and {L}{\'e}vy densities.
\newblock Technical Report MPS--RR 2000--18, Aarhus University, Centre for
  Mathematical Physics and Stochastics (MaPhySto).

\bibitem[Barndorff-Nielsen et~al., 2002]{bn+al:02}
Barndorff-Nielsen, O.~E., Nicolato, E., and Shephard, N. (2002).
\newblock Some recent developments in stochastic volatility modelling.
\newblock {\em Quantitative Finance}, 2:11--23.

\bibitem[Barndorff-Nielsen et~al., 2001]{bn+al:01}
Barndorff-Nielsen, O.~E., Pedersen, J., and Sato, K. (2001).
\newblock Multivariate subordination, self-decomposability and stability.
\newblock {\em Advances in Applied Probability}, 33:160--187.

\bibitem[Barndorff-Nielsen and Shephard, 2001]{bn+she:02}
Barndorff-Nielsen, O.~E. and Shephard, N. (2001).
\newblock Non-{G}aussian {O}rnstein--{U}hlenbeck-based models and some of their
  uses in financial economics.
\newblock {\em Journal of the Royal Statistical Society: Series B},
  63:167--241.

\bibitem[Besag and Diggle, 1977]{bes+dig:77}
Besag, J. and Diggle, P. (1977).
\newblock Simple {M}onte {C}arlo tests for spatial pattern.
\newblock {\em Journal of the Royal Statistical Society. Series C (Applied
  Statistics)}, 26:327--333.

\bibitem[Bladt and S{\o}rensen, 2014]{bla+sor:14}
Bladt, M. and S{\o}rensen, M. (2014).
\newblock Simple simulation of diffusion bridges with application to likelihood
  inference for diffusions.
\newblock {\em Bernoulli}, 20:645--675.

\bibitem[Boyarchenko and Levendorski\"i, 2000]{boy+lev:00}
Boyarchenko, S.~I. and Levendorski\"i, S.~Z. (2000).
\newblock Option pricing for truncated {L}\'evy processes.
\newblock {\em International Journal of Theoretical and Applied Finance},
  3:549--552.

\bibitem[Carr et~al., 2002]{car+al:02}
Carr, P., Geman, H., Madan, D.~B., and Yor, M. (2002).
\newblock The fine structure of asset returns: an empirical investigation.
\newblock {\em Journal of Business}, 75:305--332.

\bibitem[Cerioli, 2010]{cer:10}
Cerioli, A. (2010).
\newblock Multivariate outlier detection with high-breakdown estimators.
\newblock {\em Journal of the American Statistical Association}, 105:147--156.

\bibitem[Cerioli et~al., 2019a]{cer+al:19}
Cerioli, A., Barabesi, L., Cerasa, A., Menegatti, M., and Perrotta, D. (2019a).
\newblock Newcomb-{B}enford law and the detection of frauds in international
  trade.
\newblock {\em Proceedings of the National Academy of Sciences}, 116:106--115.

\bibitem[Cerioli et~al., 2019b]{cer+al:19sjs}
Cerioli, A., Farcomeni, A., and Riani, M. (2019b).
\newblock Wild adaptive trimming for robust estimation and cluster analysis.
\newblock {\em Scandinavian Journal of Statistics}, 46:235--256.

\bibitem[Cerioli and Perrotta, 2014]{CerPer:2014}
Cerioli, A. and Perrotta, D. (2014).
\newblock Robust clustering around regression lines with high density regions.
\newblock {\em Advances in Data Analysis and Classification}, 8:5--26.

\bibitem[Christoph and Schreiber, 2001]{chr+sch:01}
Christoph, G. and Schreiber, K. (2001).
\newblock Positive {L}innik and discrete {L}innik distributions.
\newblock In Balakrishnan, N., Ibragimov, I.~V., and Nevzorov, V.~B., editors,
  {\em Asymptotic Methods in Probability and Statistics with Applications},
  pages 3--17. Birkh\"auser, Boston.

\bibitem[Fallahgoul and Loeper, 2021]{fal+loe:19}
Fallahgoul, H. and Loeper, G. (2021).
\newblock Modelling tail risk with tempered stable distributions: an overview.
\newblock {\em Annals of Operations Research}, 299:1253--1280.

\bibitem[Fontaine et~al., 2020]{fon+al:19}
Fontaine, S., Yang, Y., Qian, W., Gu, Y., and Fan, B. (2020).
\newblock A unified approach to sparse {T}weedie modeling of multisource
  insurance claim data.
\newblock {\em Technometrics}, 62:339--356.

\bibitem[Gaver, 1970]{gav:70}
Gaver, D.~P. (1970).
\newblock Multivariate gamma distributions generated by mixture.
\newblock {\em Sankhy$\bar{\text{a}}$, Series A}, pages 123--126.

\bibitem[Gorenflo et~al., 2020]{gor+al:14}
Gorenflo, R., Kilbas, A.~A., Mainardi, F., and Rogosin, S. (2020).
\newblock {\em Mittag-Leffler Functions, Related Topics and Applications.
  Second Edition}.
\newblock Springer, Berlin.

\bibitem[Grabchak, 2016]{gra:16}
Grabchak, M. (2016).
\newblock {\em Tempered Stable Distributions: Stochastic Models for Multiscale
  Processes}.
\newblock Springer, Cham.

\bibitem[Grabchak, 2019]{gra:19}
Grabchak, M. (2019).
\newblock Rejection sampling for tempered {L}\'evy processes.
\newblock {\em Statistics and Computing}, 29:549--558.

\bibitem[Guerrier et~al., 2019]{gue+al:19}
Guerrier, S., Dupuis-Lozeron, E., Ma, Y., and Victoria-Feser, M.-P. (2019).
\newblock Simulation-based bias correction methods for complex models.
\newblock {\em Journal of the American Statistical Association}, 114:146--157.

\bibitem[Haubold et~al., 2011]{hau+al:11}
Haubold, H.~J., Mathai, A.~M., and Saxena, R.~K. (2011).
\newblock {M}ittag-{L}effler functions and their applications.
\newblock {\em Journal of Applied Mathematics}.
\newblock {Article ID 298628}.

\bibitem[Hubalek and Sgarra, 2006]{hub+sga:06}
Hubalek, F. and Sgarra, C. (2006).
\newblock {E}sscher transforms and the minimal entropy martingale measure for
  exponential {L}\'evy models.
\newblock {\em Quantitative Finance}, 6:125--145.

\bibitem[Jurek and Vervaat, 1983]{jur+ver:83}
Jurek, Z.~J. and Vervaat, W. (1983).
\newblock An integral representation for selfdecomposable {B}anach space valued
  random variables.
\newblock {\em Zeitschrift f{\"u}r Wahrscheinlichkeitstheorie und verwandte
  Gebiete}, 62:247--262.

\bibitem[Kalashnikov, 1997]{kal:97}
Kalashnikov, V.~V. (1997).
\newblock {\em Geometric Sums: Bounds for Rare Events with Applications}.
\newblock Springer, Dordrecht.

\bibitem[Khalin and Postnikov, 2020]{kha+pos:06}
Khalin, A.~A. and Postnikov, E.~B. (2020).
\newblock A wavelet-based approach to revealing the {T}weedie distribution type
  in sparse data.
\newblock {\em Physica A}, 553:124653.

\bibitem[Khokhlov et~al., 2020]{kho+al:20}
Khokhlov, Y., Korolev, V., and Zeifman, A. (2020).
\newblock Multivariate scale-mixed stable distributions and related limit
  theorems.
\newblock {\em Mathematics}, 8:749.

\bibitem[Kibble, 1941]{kib:41}
Kibble, W.~F. (1941).
\newblock A two-variate gamma type distribution.
\newblock {\em Sankhy$\bar{\text{a}}$}, 5:137--150.

\bibitem[Klebanov et~al., 1985]{kle+al:84}
Klebanov, L.~B., Maniya, G.~M., and Melamed, I.~A. (1985).
\newblock A problem of {Z}olotarev and analogs of infinitely divisible and
  stable distributions in a scheme for summing a random number of random
  variables.
\newblock {\em Theory of Probability and its Applications}, 29:791--794.

\bibitem[Koponen, 1995]{kop:95}
Koponen, I. (1995).
\newblock Analytic approach to the problem of convergence of truncated {L}\'evy
  flights towards the {G}aussian stochastic process.
\newblock {\em Physical Review E}, 52:1197--1199.

\bibitem[Korolev et~al., 2020]{kor+al:20}
Korolev, V., Gorshenin, A., and Zeifman, A. (2020).
\newblock On mixture representations for the generalized {L}innik distribution
  and their applications in limit theorems.
\newblock {\em Journal of Mathematical Sciences}, 246:503--518.

\bibitem[Kozubowski et~al., 2006]{koz+al:06}
Kozubowski, T.~J., Meerschaert, M.~M., and Podg\'orski, K. (2006).
\newblock Fractional {L}aplace motion.
\newblock {\em Advances in Applied Probability}, 38:451--464.

\bibitem[Kozubowski and Podg{\'o}rski, 2009]{koz+pod:09}
Kozubowski, T.~J. and Podg{\'o}rski, K. (2009).
\newblock Distributional properties of the negative binomial {L}{\'e}vy
  process.
\newblock {\em Probability and Mathematical Statistics}, 29:43--71.

\bibitem[Kozubowski and Rachev, 1999]{koz+rac:99}
Kozubowski, T.~J. and Rachev, S.~T. (1999).
\newblock Univariate geometric stable laws.
\newblock {\em Journal of Computational Analysis and Applications}, 1:177--217.

\bibitem[Kumar et~al., 2019a]{kum+al:19b}
Kumar, A., Maheshwari, A., and Wylomanska, A. (2019a).
\newblock Linnik {L}\'evy process and some extensions.
\newblock {\em Physica A}, 529:121539.

\bibitem[Kumar et~al., 2019b]{kum+al:19a}
Kumar, A., Upadhye, N.~S., Wylomanska, A., and Gajda, J. (2019b).
\newblock Tempered {M}ittag-{L}effler {L}\'evy processes.
\newblock {\em Communications in Statistics -- Theory and Methods},
  48:396--411.

\bibitem[Kumar and Verma, 2020]{kum:20}
Kumar, A. and Verma, H. (2020).
\newblock Potential theory of normal tempered stable process.
\newblock Technical Report 2004.02267, arXiv.

\bibitem[Leonenko et~al., 2021]{leo+al:21}
Leonenko, N., Macci, C., and Pacchiarotti, B. (2021).
\newblock Large deviations for a class of tempered subordinators and their
  inverse processes.
\newblock {\em Proceedings of the Royal Society of Edinburgh. Section A:
  Mathematics}.
\newblock {In press}.

\bibitem[Lin, 1994]{lin:94}
Lin, G.~D. (1994).
\newblock Characterizations of the {L}aplace and related distributions via
  geometric compound.
\newblock {\em Sankhy$\bar{\text{a}}$, Series A}, 56:1--9.

\bibitem[Lin, 1998]{lin:98}
Lin, G.~D. (1998).
\newblock A note on the {L}innik distributions.
\newblock {\em Journal of Mathematical Analysis and Applications},
  217:701--706.

\bibitem[Linnik, 1963]{lin:63}
Linnik, Y.~V. (1963).
\newblock Linear forms and statistical criteria, {I}, {II}.
\newblock {\em Selected Translations in Mathematical Statistics and
  Probability}, 3:1--90.

\bibitem[Ma et~al., 2018]{ma+has:18}
Ma, R., Yan, G., and Hasan, M.~T. (2018).
\newblock Tweedie family of generalized linear models with distribution-free
  random effects for skewed longitudinal data.
\newblock {\em Statistics in Medicine}, 37:3519--3532.

\bibitem[Mantegna and Stanley, 1994]{man+sta:95}
Mantegna, R.~N. and Stanley, H.~E. (1994).
\newblock Stochastic process with ultraslow convergence to a {G}aussian: the
  truncated {L}\'evy flight.
\newblock {\em Physical Review Letters}, 73:2946--2949.

\bibitem[Mathai and Haubold, 2008]{mat+ara:08}
Mathai, A.~M. and Haubold, H.~J. (2008).
\newblock {\em Special Functions for Applied Scientists}.
\newblock Springer, New York.

\bibitem[Mittnik and Rachev, 1991]{mit+rac:91}
Mittnik, S. and Rachev, S.~T. (1991).
\newblock Alternative multivariate stable distributions and their applications
  to financial modeling.
\newblock In Cambanis, S., Samorodnitsky, G., and Taqqu, M.~S., editors, {\em
  Stable Processes and Related Topics}, pages 107--119. Birk\"auser, Boston.

\bibitem[Pakes, 1998]{pak:98}
Pakes, A.~G. (1998).
\newblock Mixture representations for symmetric generalized {L}innik laws.
\newblock {\em Statistics and Probability Letters}, 37:213--221.

\bibitem[Perrotta et~al., 2020a]{per+al:20}
Perrotta, D., Cerasa, A., Torti, F., and Riani, M. (2020a).
\newblock The robust estimation of monthly prices of goods traded by the
  {E}uropean {U}nion.
\newblock Technical Report KJ-NA-30188-EN-N (online), Publications Office of
  the European Union, Luxembourg.
\newblock DOI: 10.2760/635844 (online).

\bibitem[Perrotta et~al., 2020b]{JRC122315}
Perrotta, D., Checchi, E., Torti, F., Cerasa, A., and Arnes~Novau, X. (2020b).
\newblock Addressing price and weight heterogeneity and extreme outliers in
  surveillance data.
\newblock Technical Report KJ-NA-30431-EN-N (online), Publications Office of
  the European Union, Luxembourg.
\newblock DOI: 10.2760/817681 (online).

\bibitem[Pillai, 1990]{pil:90}
Pillai, R.~N. (1990).
\newblock On {M}ittag-{L}effler functions and related distributions.
\newblock {\em Annals of the Institute of Statistical Mathematics},
  42:157--161.

\bibitem[Prabhakar, 1971]{pra:71}
Prabhakar, T.~R. (1971).
\newblock A singular integral equation with a generalized {M}ittag {L}effler
  function in the kernel.
\newblock {\em Yokohama Mathematical Journal}, 19:7--15.

\bibitem[Sandhya and Pillai, 1999]{san+pil:14}
Sandhya, E. and Pillai, R.~N. (1999).
\newblock On geometric infinite divisibility.
\newblock {\em Journal of the Kerala Statistical Association}, 10:1--7.
\newblock {Retrivable at: arXiv:1409.4022}.

\bibitem[Sato, 1991]{sat:91}
Sato, K. (1991).
\newblock Self-similar processes with independent increments.
\newblock {\em Probability Theory and Related Fields}, 89:285--300.

\bibitem[Sato, 1999]{sat:99}
Sato, K. (1999).
\newblock {\em L\'{e}vy Processes and Infinitely Divisible Distributions}.
\newblock Cambridge University Press, Cambridge.

\bibitem[Schilling et~al., 2012]{sch+von:12}
Schilling, R.~L., Song, R., and Vondra\v{c}ek, Z. (2012).
\newblock {\em {B}ernstein Functions: Theory and Applications. Second Edition}.
\newblock De Gruyter, Berlin.

\bibitem[Steutel and Van~Harn, 2004]{ste+vh:03}
Steutel, F.~W. and Van~Harn, K. (2004).
\newblock {\em Infinite Divisibility of Probability Distributions on the Real
  Line}.
\newblock Dekker, New York.

\bibitem[Wolfe, 1982]{wol:82}
Wolfe, S.~J. (1982).
\newblock On a continuous analogue of the stochastic difference equation $x_n=
  \rho x_{n-1}+ b_n$.
\newblock {\em Stochastic Processes and Their Applications}, 12:301--312.

\bibitem[Wright, 1935]{wri:35}
Wright, E.~M. (1935).
\newblock The asymptotic expansion of the generalized hypergeometric function.
\newblock {\em Journal of the London Mathematical Society}, 10:286--293.

\end{thebibliography}

\end{document}